\title[Equivariant vector bundles on $X_E$]{On equivariant vector bundles on the Fargues--Fontaine curve over a finite extension}
\author{Rustam Steingart}
\address{Ruprecht-Karls-Universität Heidelberg,
	Mathematisches Institut,Im Neuenheimer Feld 205, D-69120 Heidelberg}
	\keywords{Fargues--Fontaine curve, Vector bundles, p-adic Hodge Theory}
	\subjclass[2020]{11F80}
\email{rsteingart@mathi.uni-heidelberg.de}
\date{\today}
\theoremstyle{plain}
\newtheorem{thm}{Theorem}[section]
\newtheorem{lem}[thm]{Lemma}
\newtheorem{rem}[thm]{Remark}
\newtheorem{prop}[thm]{Proposition}
\newtheorem{ex}[thm]{Example}
\newtheorem*{cor*}{Corollary}
\newtheorem{introtheorem}{Theorem}
\theoremstyle{definition}
\newtheorem{defn}[thm]{Definition}
\newcommand{\NN}{\mathbb{N}}
\newcommand{\Gal}{\operatorname{Gal}}
\newcommand{\Hom}{\operatorname{Hom}}
\newcommand{\ZZ}{\mathbb{Z}}
\newcommand{\QQ}{\mathbb{Q}}
\newcommand{\RR}{\mathbb{R}}
\newcommand{\CC}{\mathbb{C}}
\newcommand{\Div}{\mathrm{Div}}
\newcommand{\ord}{\mathrm{ord}}
\newcommand{\fB}{\mathbf{B}}
\newcommand{\cR}{\mathcal{R}}
\newcommand{\id}{\operatorname{id}}
\newcommand{\Bdr}{\fB_{\mathrm{dR}}}
\newcommand{\Bcris}{\fB_{\mathrm{cris}}}
\DeclarePairedDelimiter\abs{\lvert}{\rvert}%
\begin{document}
	\maketitle
	\begin{abstract}
	Let $K/E/\QQ_p$ be a tower of finite extensions with $E$ Galois. We relate the category of $G_K$-equivariant vector bundles on the Fargues--Fontaine curve with coefficients in $E$ with $E$-$G_K$-$B$-pairs and describe crystalline and de Rham objects in explicit terms. When $E$ is a proper extension, we give a new description of the category in terms of compatible tuples of $\fB_e$-modules, which allows us to compute Galois cohomology in terms of an explicit Čech complex which can serve as a replacement of the fundamental exact sequence. 
	\end{abstract}
	\section*{Introduction} 
	Let $E/\QQ_p$ be finite Galois, let $\pi_E$ be a uniformiser of $E$ and let us denote by $\varphi_E$ the $q = p^{f(E/\QQ_p)}$-Frobenius. 
	In \cite{farguesfontainecourbe} Fargues and Fontaine assign to a non-archimedean local field $E$ and a perfectoid field $F$ of characteristic $p$ a ``curve''\footnote{The curve $X_{F,E}$ is not of finite type over $E.$ It is a curve in a slightly more general sense than usual (cf. section \ref{sec:vectb}).}  $X_{F,E},$ as $\operatorname{Proj}(P_{F,E})$ with a certain graded ring $P_{F,E} = \oplus_{d\geq0} (B_{F,E}^+)^{\varphi_E=\pi_E^d},$ where $B_{F,E}$ is an extension of the ring of ramified Witt vectors $W(o_F)_E[1/\pi_E]$ equipped with an extension of the Frobenius operator $\varphi_E.$ This curve plays a fundamental role in $p$-adic Hodge Theory. When $F=\CC_p^\flat$ and $E=\QQ_p$ the curve $X=X_{\CC_p^\flat,\QQ_p}$ has a distinct point $\infty$ with finite $G_{\QQ_p}$-orbit. The complement of the point is isomorphic to the spectrum of  $\fB_e$ and the completed stalk at $\infty$ (resp. the completion of the function field at the valuation corresponding to $\infty$) are isomorphic to $\Bdr^+$ (resp. $\Bdr$). The fundamental exact sequence 
	$$0 \to \QQ_p \to \fB_e \oplus \Bdr^+ \to \Bdr \to 0$$ obtains a geometric interpretation in the form of $H^0(X,\mathcal{O}_X)=\QQ_p$ and  $H^1(X,\mathcal{O}_X)=0.$ 
	The category of continuous representations of an open subgroup $G_K\subset G_{\QQ_p}$ on finite dimensional $\QQ_p$-vector spaces can be embedded via $V \mapsto V\otimes_{\QQ_p}\mathcal{O}_X$ as a full subcategory of the category of locally free $G_K$-equivariant sheaves. 
	In the first part of the article we revisit the notion of $G_K$-$E$-$B$-pair (subsequently referred to as $B$-pair) introduced by Nakamura (building on work of Berger) and relate it to the notion of $G_K$-equivariant vector bundles on the Fargues--Fontaine curve $X_E:=X_{\CC_p^\flat,E}.$ While the case $E=\QQ_p$ is extensively covered in the literature, the case $E\neq \QQ_p$ becomes more subtle. For example, Pham in  \cite{pham2023prismaticfcrystalsanalyticcrystalline} studies a natural seeming notion of ``crystalline'' vector bundles, but the slope $0$ objects are equivalent to $E$-crystalline representations (crystalline and $E$-analytic meaning Hodge-Tate of weight $0$ outside of a fixed embedding $E \to \CC_p$) of Kisin and Ren,  a category which for $E\neq \QQ_p$ is smaller. We give a more refined definition which leads to the following equivalence.
	\begin{introtheorem}
		The following hold
		\begin{enumerate}
			\item The category of $G_K$-$E$-$B$-pairs is equivalent to the category of $G_K$-bundles on $X_E.$
			\item Under the above equivalence, crystalline, (resp. de Rham, resp. slope $0$ objects) correspond to one another. 
		\end{enumerate}
	\end{introtheorem}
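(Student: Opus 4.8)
The strategy is to reduce everything to the classical case $E = \QQ_p$ via restriction of scalars, and then leverage the known dictionary between $B$-pairs and equivariant bundles over $\QQ_p$.

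For part (1): A $G_K$-$E$-$B$-pair is a pair $(W_e, W_{\mathrm{dR}}^+)$ where $W_e$ is a finite free $B_{e,E} = \fB_{e}\otimes_{\QQ_p}E$-module with semilinear $G_K$-action, $W_{\mathrm{dR}}^+$ a $G_K$-stable $\Bdr^+\otimes_{\QQ_p}E$-lattice in $W_{\mathrm{dR}} = W_e\otimes_{B_{e,E}}(\Bdr\otimes E)$. The point is that $\fB_{e,E} = H^0(X_E \setminus \{\infty_E\}, \hol)$ and the completed stalk of $X_E$ at (the orbit of) $\infty_E$ is $\Bdr^+\otimes_{\QQ_p}E$ — this should follow from the description of $X_E$ as $X_{\QQ_p} \otimes_{\QQ_p} E$ on the relevant opens, or more precisely from the fact that $X_E$ is the quotient/descent of $X_{\QQ_p}$-data by $\Gal(E/\QQ_p)$ acting through Frobenius, as in Fargues–Fontaine. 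Given this, one builds a functor from $B$-pairs to $G_K$-bundles by Beauville–Laszlo glueing: glue the $\hol_{X_E\setminus\{\infty_E\}}$-module $\widetilde{W_e}$ with the $\Bdr^+\otimes E$-module $W_{\mathrm{dR}}^+$ along $\Bdr\otimes E$. One checks this lands in vector bundles (local freeness is preserved by glueing over a Dedekind-type base), is $G_K$-equivariant, and is exact and compatible with tensor/internal Hom. For the quasi-inverse, send a $G_K$-bundle $\mathcal{F}$ to $(H^0(X_E\setminus\{\infty_E\},\mathcal{F}), \widehat{\mathcal{F}}_{\infty_E})$; the two functors are mutually inverse because Beauville–Laszlo glueing is an equivalence between bundles on $X_E$ and glueing data, and because $X_E\setminus\{\infty_E\} = \operatorname{Spec}\fB_{e,E}$ is affine so that the bundle is recovered from its global sections there.

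For part (2): one must match the three distinguished subcategories. The slope $0$ case: a $G_K$-bundle is slope $0$ iff it is étale, iff (by the $E$-linear analogue of the Fargues–Fontaine classification / Kedlaya's theorem, which holds since $X_E$ is again a complete curve with the same Harder–Narasimhan formalism) it comes from a $\fB_{e,E}$-representation that is trivialised over $\widehat{\hol}$, which on the $B$-pair side is exactly Nakamura's condition characterising the image of $\operatorname{Rep}_E(G_K)$. For de Rham: $W$ is de Rham as a $B$-pair iff $D_{\mathrm{dR}}(W) := (W_{\mathrm{dR}})^{G_K}$ has full rank over $(\Bdr\otimes E)^{G_K} = K\otimes_{\QQ_p}E$, and one checks this is equivalent to the bundle condition that $\widehat{\mathcal{F}}_{\infty_E}\otimes\Bdr$ is a trivial $G_K$-$\Bdr\otimes E$-module, i.e. the standard "de Rham $=$ pointwise trivial at $\infty$" criterion. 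For crystalline: here one uses the refined definition the author has just introduced (replacing the Kisin–Ren notion) — presumably $W$ is crystalline iff $D_{\mathrm{cris}}(W) := (W_e\otimes_{\fB_{e,E}}\Bcris\otimes E)^{G_K}$ is free of full rank over $(\Bcris\otimes E)^{\varphi=1,G_K} = E_0$-ish (the maximal unramified subextension tensored appropriately), with the $\varphi$-action, and one matches this with the condition on the bundle that its pullback to the relevant $\varphi$-equivariant cover is trivial. The key compatibility is that all three conditions are detectable after the fully faithful $\otimes_{\QQ_p}E$-linear embedding and are stable under the equivalence of (1) because that equivalence is given by glueing, which commutes with $-\otimes\Bcris$, $-\otimes\Bdr$, and with taking $G_K$-invariants.

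The main obstacle I expect is \emph{not} the formal glueing but rather pinning down the correct $E$-rational structure on $\Bdr$, $\Bcris$ and their invariants — i.e. verifying $(\Bdr\otimes_{\QQ_p}E)^{G_K}$, $(\Bcris\otimes_{\QQ_p}E)^{G_K,\varphi_E=1}$ etc. and checking that the "refined crystalline" condition on $B$-pairs is exactly the right one to match $\varphi_E$-equivariant triviality of the bundle near $\infty_E$; the subtlety flagged in the introduction (Kisin–Ren crystalline being strictly smaller) lives precisely here, so the heart of the argument is the careful bookkeeping of the $\varphi_E$ versus $\varphi_{\QQ_p}$ actions and the fact that $E\otimes_{\QQ_p}E \cong \prod_{\Gal(E/\QQ_p)} E$ breaks things into Frobenius-twisted components.
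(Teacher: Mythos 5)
Your plan is correct and follows essentially the same route as the paper: identify $X_E\cong E\times_{\QQ_p}X_{\QQ_p}$, glue via Beauville--Laszlo along the full fibre above $\infty$ (so that the sections of the complement give $E\otimes_{\QQ_p}\fB_e$ and the completed stalks give $E\otimes_{\QQ_p}\Bdr^+\cong\prod_{\sigma}\Bdr^+$), and characterise slope $0$, de Rham and crystalline objects by the standard full-rank/trivialisation criteria. In particular, your guessed crystalline condition (full rank of $(W_e\otimes \Bcris\otimes_{\QQ_p}E)^{G_K}$ over $K_0\otimes_{\QQ_p}E$, i.e.\ triviality after restriction of scalars to the $\QQ_p$-side) is exactly the paper's refined definition via the ring $B_S=E\otimes_{\QQ_p}B^+_{\QQ_p}[1/t_{\infty}]$, so the matching of crystalline objects proceeds as you describe.
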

	
	While the above Theorem is as expected, we also show that for $E\neq \QQ_p$ there are new phenomena which appear. 
	Geometrically, the curve $X_E \xrightarrow{p_E} X_{\QQ_p}$ has $[E:\QQ_p]$-points lying above the distinguished point $\infty$ corresponding to Fontaine's period $ t_\infty:=t_{\text{cyc}} .$ This allows us to puncture $X_E$ in multiple different ways, and hence allows us to consider a covering of $X_E$ by punctured curves which are given explicitly as 
	$$U_T := X\setminus T\cong \operatorname{Spec}\left(\left(B^+_E\left[\frac{1}{\prod_{x \in T}t_x}\right]\right)^{\varphi_E=1}\right),$$
	where $ T \subset S =  \operatorname{Spec}(\kappa(\infty))\times_{X_{\QQ_p}}X_E$ is a finite subset and for $x \in S$ we denote by $t_{x}$ the period corresponding to $x.$
	More precisely, one has that $\kappa(\infty) \cong \CC_p$ and $X_E = E \times_{\QQ_p}X_{\QQ_p}.$ Thus the points above $\infty \in X_{\QQ_p}$ are in bijection with the embeddings $E \to \CC_p.$ Fixing a uniformiser $\pi_E$ of $E,$ allows us to define a base point by considering the period $t_{\mathrm{LT}}$ attached to the Lubin--Tate character $\chi_{\mathrm{LT}}.$
    By general results due to Fargues and Fontaine, the closed points of the curve are in bijection with 	
	$((B_E^+)^{\varphi_E=\pi_E}\setminus \{0\})/E^\times.$ The other closed points in $S$ are obtained in a similar manner using embeddings $\sigma \colon E \to \CC_p.$ We warn at this point that they are \textit{not necessarily } the periods attached to $\sigma\circ \chi_{\mathrm{LT}}$ as those would be in the $\varphi_E= \sigma(\pi_E)$ Eigenspace. Instead one has to consider $o_{\breve{E}}=W(\overline{\mathbb{F}_q})_E$-multiples of those (cf. Proposition \ref{prop:elementetinftysigma}). 
	
	 We see that the ring $\mathcal{O}_{X_E}(U_T)$ resembles $\fB_e = \fB_{\text{cris}}^{\varphi_p=1} \cong B_{\QQ_p}^+[1/t_{\text{cyc}}]^{\varphi_p=1}$ from $p$-adic Hodge Theory and by choosing a suitable family of subsets $\mathfrak{L}$ of $S$ we can express the condition of being a locally free sheaf (resp. $G_K$-equivariant sheaf) in  explicit terms as a tuple of free $\mathcal{O}_{X}(U_T)$-modules (resp. with continuous $G_K$-action)  for $T \in \mathfrak{L}$ satisfying certain compatibilities, which we call $\fB_e$-tuples.
	\begin{introtheorem}

		\label{thm:equivalencetuples1}
		Let $\mathfrak{L}$ be a co-covering of $S,$ then the functor 
		$$\operatorname{Bun}_{X_E} \to \{\fB_e\text{-tuples indexed by } \mathfrak{L}\}$$
		sending $\mathcal{F}$ to $(\mathcal{F}(X\setminus T)_T)$ is an equivalence of categories. 
		If $K/E$ is a finite extension then the same holds for the  $G_K$-equivariant version. 
	\end{introtheorem}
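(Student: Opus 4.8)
The plan is to recognise the statement as ordinary Zariski (equivalently, faithfully flat) descent of finite locally free sheaves along the explicit affine open cover $\{U_T\}_{T\in\mathfrak{L}}$ of $X_E$, the $G_K$-version adding only bookkeeping for a continuous semilinear action along a $G_K$-stable cover.

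First I would isolate the structural inputs. By the description recalled above, each $U_T=X_E\setminus T$ is affine with $\mathcal{O}_{X_E}(U_T)=\bigl(B^+_E[1/\prod_{x\in T}t_x]\bigr)^{\varphi_E=1}$; moreover $U_T\cap U_{T'}=U_{T\cup T'}$ and $U_T\cap U_{T'}\cap U_{T''}=U_{T\cup T'\cup T''}$ are again of this shape, and the transition maps $\mathcal{O}_{X_E}(U_T)\to\mathcal{O}_{X_E}(U_{T\cup T'})$ are localisations, hence flat, so the usual descent machinery applies even though these rings are not Noetherian. Secondly, the defining property of a co-covering is $\bigcap_{T\in\mathfrak{L}}T=\emptyset$, whence $\bigcup_{T\in\mathfrak{L}}U_T=X_E\setminus\bigcap_{T\in\mathfrak{L}}T=X_E$; thus $\{U_T\}_{T\in\mathfrak{L}}$ is a genuine affine open cover of the scheme $X_E$, the simplest nontrivial instance being two distinct singletons, which is available precisely because $E\neq\QQ_p$.

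With this in hand, the three checks are as follows. \emph{Well-definedness:} for $\mathcal{F}\in\operatorname{Bun}_{X_E}$ the module $\mathcal{F}(U_T)$ is finite projective over $\mathcal{O}_{X_E}(U_T)$ with $\mathcal{F}|_{U_T}\cong\widetilde{\mathcal{F}(U_T)}$, and the canonical identifications $\mathcal{F}(U_T)\otimes_{\mathcal{O}_{X_E}(U_T)}\mathcal{O}_{X_E}(U_{T\cup T'})\xrightarrow{\ \sim\ }\mathcal{F}(U_{T\cup T'})$ together with their analogues over triple overlaps provide exactly the compatibility and cocycle data of a $\fB_e$-tuple indexed by $\mathfrak{L}$. \emph{Full faithfulness:} for $\mathcal{F},\mathcal{G}\in\operatorname{Bun}_{X_E}$ the internal Hom sheaf $\underline{\operatorname{Hom}}_{\mathcal{O}_{X_E}}(\mathcal{F},\mathcal{G})$ is quasi-coherent, so $\operatorname{Hom}_{\mathcal{O}_{X_E}}(\mathcal{F},\mathcal{G})$ is the equaliser of $\prod_{T}\operatorname{Hom}_{\mathcal{O}_{X_E}(U_T)}(\mathcal{F}(U_T),\mathcal{G}(U_T))\rightrightarrows\prod_{T,T'}\operatorname{Hom}_{\mathcal{O}_{X_E}(U_{T\cup T'})}\bigl(\mathcal{F}(U_{T\cup T'}),\mathcal{G}(U_{T\cup T'})\bigr)$, which is by construction $\operatorname{Hom}$ of the associated $\fB_e$-tuples. \emph{Essential surjectivity:} given a $\fB_e$-tuple $(M_T)_{T\in\mathfrak{L}}$, its compatibility isomorphisms are descent data for $\{U_T\}$, so the $\widetilde{M_T}$ glue to a quasi-coherent sheaf $\mathcal{F}$ on $X_E$ with $\mathcal{F}(U_T)\cong M_T$; and $\mathcal{F}$ is finite locally free since each $M_T$ is finite projective over $\mathcal{O}_{X_E}(U_T)$ and the $U_T$ cover $X_E$, so $\mathcal{F}\in\operatorname{Bun}_{X_E}$ and maps to $(M_T)_T$ up to isomorphism.

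For the $G_K$-equivariant version the crucial observation is that, since $E/\QQ_p$ is Galois, every $\QQ_p$-embedding $E\hookrightarrow\CC_p=\kappa(\infty)$ agrees on $E$ with any of its $G_K$-translates, so $G_K$ fixes each point of $S$; hence each $U_T$ is a $G_K$-stable open subscheme and $G_K$ acts continuously and semilinearly on $\mathcal{O}_{X_E}(U_T)$. Restriction, the overlap isomorphisms, and the gluing are all functorial and therefore transport continuous semilinear $G_K$-actions, and gluing a compatible family of such actions along a $G_K$-stable cover is unproblematic; one thus reruns the three checks with ``$G_K$-equivariant'' attached throughout. I expect the only genuine obstacle to lie in essential surjectivity: one must verify that the abstract notion of $\fB_e$-tuple encodes descent data for this particular cover precisely --- neither more nor less --- and that the descended object is a vector bundle and not merely quasi-coherent, which is exactly where flatness of the transition maps and finite projectivity of the $M_T$ enter (here it is relevant that $U_T$ is a Dedekind, indeed principal ideal, domain for $T\neq\emptyset$); full faithfulness and the equivariant bookkeeping are then formal.
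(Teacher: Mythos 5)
Your proposal is correct and follows essentially the same route as the paper: the co-covering condition makes $\{U_T\}_{T\in\mathfrak{L}}$ an affine open cover of $X_E$ with overlaps $U_{T\cup T'}$, and the theorem is then ordinary Zariski glueing of finite locally free modules, with the tuple isomorphisms serving as the descent data. Your explicit justification that $G_K$ fixes each point of $S$ (using $E/\QQ_p$ Galois and $E\subseteq K$), so that the $U_T$ are $G_K$-stable, is exactly the point the paper leaves implicit in the equivariant case.
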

	Here by co-covering we mean a family of subsets $\mathfrak{L}$ not containing $\emptyset$ such that $X=\bigcup_{T \in \mathfrak{L}} U_T.$
	Let us point out that, since $\abs{S} = [E:\QQ_p]$ such a co-covering only exists if $[E:\QQ_p]>1.$ 
	As a corollary of Theorem \ref{thm:equivalencetuples1}
	we can compute the continuous Galois cohomology of a $G_K$-$E$-$B$-pair in terms of the  Čech complex of the $\fB_e$-tuple explicitly,
	i.e., $$\mathbf{R}\Gamma_{\text{sheaf}}(X_E,\mathcal{F}) \simeq C^\bullet(\mathfrak{L},(U_T)_{T \in \mathfrak{L}}),$$
	where the right hand side denotes the Čech complex for the covering $U_T.$
	To illustrate the usefulness of this concept let us consider the following example: 
	Consider two embeddings $\sigma \neq \tau.$ Let $V \in \operatorname{Rep}_E(G_K).$
	We get a ``fundamental exact sequence'' of the form
	$$0 \to V \to B_E^+[1/t_\sigma]^{\varphi_E=1} \otimes V \times B_E^+[1/t_\sigma]^{\varphi_E=1} \otimes V  \to B_E^+[1/t_\sigma t_\tau]^{\varphi_E=1} \otimes V \to 0.$$
	Using known results from the theory of almost $\CC_p$-representations, one can show that the differentials in $C^\bullet(\mathfrak{L},(U_T)_{T \in \mathfrak{L}})$ are strict. This allows us to compute $G_K$-cohomology in terms of this modified fundamental exact sequence. 
	It is in practice very hard to prove that a complex has strict differentials and the theory of $\fB_e$-tuples provides many different such complexes. The objects are in a certain sense of a multivariate nature. We will show that there is a canonical functor from multivariate $(\varphi,\Gamma)$-modules of \cite{Berger2013} to $\fB_e$-tuples and it is not difficult to see that every crystalline object arises in this way. It would be interesting to investigate which conditions on the $(\varphi,\Gamma)$-module side make this functor fully faithful and whether it is essentially surjective, given that according to our Theorem 	\ref{thm:equivalencetuples1} we would want a category of multivariable $(\varphi,\Gamma)$-modules which is equivalent to our category of $G_K$-$\fB_e$-tuples. 
		\section*{Acknowledgements}
	This research was supported by Deutsche Forschungsgemeinschaft (DFG) - Project number 536703837, which allowed me to carry out my research at the UMPA of the ENS de Lyon. I would like to thank the institution and in particular Laurent Berger for his guidance and many fruitful discussions. I thank Dat Pham and Marvin Schneider for comments on an earlier draft. 
	
	\section{Preliminaries}

	We summarise the main classical view points: $B$-pairs, cyclotomic $(\varphi,\Gamma)$-modules and Lubin--Tate $(\varphi,\Gamma)$-modules. Before we proceed let us quickly comment on the roles of $E$ and $K.$
	The assumption $E/\QQ_p$ Galois is not strictly required, but makes the notation easier.
 Nakamura usually requires $E$ to contain a normal closure of $K$ and endows for example $E \otimes_{\QQ_p} \Bdr$ with the action given by tensoring with the trivial action on $E$ but then the decomposition 
 $E\otimes_{\QQ_p} \Bdr = \prod_{\sigma\colon K\to E} E\otimes_{K,\sigma} \Bdr$ is $G_K$-equivariant, while the decomposition 
 $E\otimes_{\QQ_p} \Bdr = \prod_{\sigma \colon E \to \CC_p} E\otimes_{E,\sigma}\Bdr$ is not, however, given the geometry of $X_E$ with $[E:\QQ_p]$-many points above $\infty$ we would prefer to work with the latter decomposition and view it as a representation at each completed stalk above $\infty.$ As a consequence we shall assume $E \subset K$ instead. 
It is possible to treat the situation $K \subset E$ by requiring that the collection of completed stalks is a $G_K$-semi-linear representation. So instead of considering an action at each completed stalk $\widehat{\mathcal{F}}_s$ at $s \in S$ one would replace this by an action on 
 $\iota^{-1} \mathcal{F}$ where $\iota \colon S \to X_E$ is the inclusion.
 
	\subsection{$B$-pairs} 
	Let $E/\QQ_p$ be finite Galois and let $K$ be an extension of $E.$  Let us denote $\Sigma_E:= \Hom(E,\Bdr^+)$ and fix one embedding $\sigma_0 \in \Sigma_E.$
	\begin{defn} \label{def:bpair}
		An \textbf{$E$-$B$-pair}  is a  pair $W=(W_e,W_{\mathrm{dR}}^+)$ consisting of a continuous finite free $E\otimes_{\QQ_p}\fB_e$-representation $W_e$  of $G_K$ together with a $G_K$-equivariant $\Bdr^+$-lattice $W_{\mathrm{dR}}^+\subseteq W_{\mathrm{dR}}:= \Bdr \otimes_{\fB_e} W_e.$ We define the \textbf{rank} of $W$ as $\operatorname{rank}(W):=\operatorname{rank}_{E\otimes_{\QQ_p}\fB_e}W_e.$
		We denote by $C^\bullet(W)$ the complex $[W_e \oplus W_{\mathrm{dR}}^+ \to W_{\mathrm{dR}}]$ concentrated in degrees $[0,1]$ and 
		we define the \textbf{Galois cohomology} of $W$  as 
		$$\mathbf{R}\Gamma(G_K,W):=\mathbf{R}\Gamma_{cts}(G_K,C^\bullet(W))$$ and write $H^i(G_K,W)$ for the cohomology groups.  
		
		We denote by $\mathcal{BP}_E$ the category of $E$-$B$-pairs with the obvious notion of morphisms. 
		A morphism $W \to W'$ of $B$-pairs is called \textbf{strict} if the co-kernel of $[W_{\mathrm{dR}}^+ \to (W')_{\mathrm{dR}}^+]$ is free. A subobject (quotient) of $W$ is called \textbf{strict } (resp. \textbf{strict at $\sigma$}) if the inclusion (projection) is strict in the above sense (resp. strict at the component corresponding to $\sigma.$). 
	\end{defn}
	We recall for an $E$-$B$-pair $(W_e,W_{\mathrm{dR}})$ we can write (as $\Bdr^+ \otimes_{\QQ_p}E$-modules) $W_{\mathrm{dR}}^{(+)} = \prod_{\sigma \in \Sigma_E}(W_{\mathrm{dR},\sigma})^{(+)}$ by using the decomposition $E \otimes_{\QQ_p} \Bdr^+ \cong \prod_{\sigma \in \Sigma_E} \Bdr^+.$

	\subsection{Vector bundles on the Fargues--Fontaine curve}
	\label{sec:vectb}
	Let $X$ be a curve in the sense of \cite{farguesfontainecourbe}, i.e., a connected regular noetherian separated $1$-dimensional scheme together with a function $\deg \colon \abs{X} \to \NN.$ We denote by $K(X)$ the field of rational functions on $X,$ i.e., the stalk $\mathcal{O}_{x,\eta}$ in the generic point $\eta$ and by $\ord_x \colon K_X^\times \to \ZZ$ the valuation of $\mathcal{O}_{X,x}$ normalised such that the map $\ord_x$ is surjective.  
	We denote by $\Div(X)$ the free abelian group generated by $\abs{X},$ for $D=\sum_{x \in \abs{X}} a_x [x] \in \Div(X)$ we define $\deg(D):= \sum_{x \in \abs{X}} a_x \deg(x),$ and for $f \in K_X^\times$ let $\Div(f):= \sum_{x \in \abs{X}} \ord_x(f)[x].$
	We say $X$ is \textbf{complete} if $\deg(\Div(f)) = 0$ for all $f \in K(X)^\times.$ 
	If $X$ is a complete curve, then $\mathcal{O}_X(X)$ is a integrally closed subfield of $K_X$ (cf. \cite[Lemme 5.1.5]{farguesfontainecourbe}).
	We call $E:=\mathcal{O}_X(X)$ the \textbf{field of definition} of $X.$
	\subsubsection{Glueing vector bundles}
	
	\begin{defn}Let $S\subset \abs{X}$ be finite and $U:= X \setminus S.$
		
		We denote by $\mathcal{C}_S$ the category whose objects are triples $$(\mathcal{E}, (M_s)_{s \in S}, (u_s)_{s \in S})$$ consisting of 
		\begin{itemize}
			\item A vector bundle $\mathcal{E}$ on $U,$
			\item For each $s \in S$ a finite free $\mathcal{O}_{X,s}$-module $M_s,$
			\item For each $s \in S$ an isomorphism
			$$u_s \colon M_s \otimes_{\mathcal{O}_{X,s}} K_X \xrightarrow{\cong} \mathcal{E}_{\eta},$$
		\end{itemize}
		with the obvious morphisms. 
		Analogously we denote by $\widehat{\mathcal{C}}$ the category whose objects are triples $$(\mathcal{E}, (M_s)_{s \in S}, (u_s)_{s \in S})$$ consisting of 
		\begin{itemize}
			\item A vector bundle $\mathcal{E}$ on $U,$
			\item For each $s \in S$ a finite free $\widehat{\mathcal{O}_{X,s}}$-module $M_s,$
			\item For each $s \in S$ an isomorphism
			$$u_s \colon M_s \otimes_{\widehat{\mathcal{O}_{X,s}}}\widehat{K_{X,s}} \xrightarrow{\cong} \mathcal{E}_{\eta} \otimes_{K_X} \widehat{K_{X,s}},$$
		\end{itemize}
		We write $\mathcal{C}_S(X)$ to emphasize the dependence on $X$ if required. 
	\end{defn}
	with $\widehat{K_{X,s}}$ being the completion of $K_X$ with respect to the valuation $\ord_x$ (equivalently the field of fractions of the completion $\widehat{\mathcal{O}_{X,s}}$ of $\mathcal{O}_{X,s}).$
	\begin{prop}\label{prop:beauvillelazlo} (cf. \cite[Proposition 5.3.1]{farguesfontainecourbe})
		Let $S\subset \abs{X}$ be finite and $U:= X\setminus S.$
		For $s \in S$ and a vector bundle $\mathcal{E}$ on $X$ let $$c_s \colon \mathcal{E}(U) \otimes_{\mathcal{O}_{X,s}} K_X \to \mathcal{E}_\eta$$ be the natural map.
		\begin{enumerate}
			\item The functor 
			$$ \mathcal{E} \to (\mathcal{E}(U), (\mathcal{E}_{s})_{s}, (c_s)_s)$$ defines an equivalence between the category $\operatorname{Bun}_X$ of vector bundles on $X$ and $\mathcal{C}_S(X).$ 
			\item The functor 
			$$ \mathcal{E} \to (\mathcal{E}(U), (\widehat{\mathcal{E}_{s}})_{s}, (c_s \otimes \id_{\widehat{K}_{X,s}})_s$$ defines an equivalence between the category of vector bundles on $X$ and $\widehat{\mathcal{C}}_S(X).$ 
		\end{enumerate}
	\end{prop}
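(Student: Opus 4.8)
The plan is to obtain (1) from faithfully flat descent together with an explicit gluing construction, and to deduce (2) from (1) by the Beauville--Laszlo theorem applied pointwise on $S$. The geometric input throughout is that, $X$ being regular of dimension one, each local ring $\mathcal{O}_{X,s}$ for $s\in S$ is a discrete valuation ring, so $\operatorname{Spec}(\mathcal{O}_{X,s})$ has exactly the two points $s$ and $\eta$, with $\mathcal{O}_{X,s}\hookrightarrow K_X$ realising $\operatorname{Spec}(K_X)$ as its generic point; consequently $U\sqcup\coprod_{s\in S}\operatorname{Spec}(\mathcal{O}_{X,s})\to X$ is a faithfully flat quasi-compact morphism, and every fibre product of two of its components over $X$ is either the component itself or $\operatorname{Spec}(K_X)$.

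For (1), I would first observe that the descent datum for this cover unwinds exactly to a triple $(\mathcal{E},(M_s)_s,(u_s)_s)$ as in the definition of $\mathcal{C}_S(X)$: the bundle $\mathcal{E}$ on $U$, the modules $M_s$ on the $\operatorname{Spec}(\mathcal{O}_{X,s})$, the gluing isomorphisms $u_s$ over $\operatorname{Spec}(K_X)$, while the isomorphisms over $\operatorname{Spec}(\mathcal{O}_{X,s})\times_X\operatorname{Spec}(\mathcal{O}_{X,s'})=\operatorname{Spec}(K_X)$ for $s\neq s'$ are forced to be $u_{s'}^{-1}\circ u_s$, and the cocycle condition on the triple overlaps (all equal to $\operatorname{Spec}(K_X)$) is then automatic. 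Faithfully flat descent for quasi-coherent sheaves hence gives (1) abstractly; concretely, given a triple one recovers the bundle as the subsheaf $\mathcal{F}\subseteq j_*\mathcal{E}$, $j\colon U\hookrightarrow X$, with $\mathcal{F}(W)=\{x\in\mathcal{E}(W\cap U):u_s^{-1}(x)\in M_s\text{ inside }\mathcal{E}_\eta\text{ for all }s\in W\cap S\}$. One then checks $\mathcal{F}|_U=\mathcal{E}$, that $\mathcal{F}$ is coherent (stalkwise finitely generated, $X$ noetherian), and that $\mathcal{F}_s=M_s$ because on the DVR $\mathcal{O}_{X,s}$ the stalk of $j_*\mathcal{E}$ at $s$ is $\mathcal{E}_\eta$, so $\mathcal{F}_s=\{x\in\mathcal{E}_\eta: u_s^{-1}(x)\in M_s\}\cong M_s$. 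Full faithfulness is likewise descent of morphisms along the same cover.

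For (2), the comparison functor $\mathcal{C}_S(X)\to\widehat{\mathcal{C}}_S(X)$ sends $M_s$ to its $\pi_s$-adic completion $\widehat{M_s}$ over $\widehat{\mathcal{O}_{X,s}}$, where $\pi_s$ is a uniformiser of $\mathcal{O}_{X,s}$; since $\widehat{\mathcal{O}_{X,s}}$ is a complete DVR one has $K_X\otimes_{\mathcal{O}_{X,s}}\widehat{\mathcal{O}_{X,s}}=\widehat{\mathcal{O}_{X,s}}[1/\pi_s]=\widehat{K_{X,s}}$, so the generic-fibre data transport correctly. The Beauville--Laszlo theorem, applied to the non-zero-divisor $\pi_s$ and the faithfully flat map $\mathcal{O}_{X,s}\to\widehat{\mathcal{O}_{X,s}}$, states that a finite free $\mathcal{O}_{X,s}$-module $M_s$ is recovered as the fibre product of $\widehat{M_s}$ and $M_s[1/\pi_s]$ over $\widehat{M_s}[1/\pi_s]$, and that conversely every finite free $\widehat{\mathcal{O}_{X,s}}$-module equipped with a $\widehat{K_{X,s}}$-linear identification of its generic fibre with $\widehat{K_{X,s}}\otimes_{K_X}\mathcal{E}_\eta$ descends uniquely to a finite free $\mathcal{O}_{X,s}$-module with generic fibre $\mathcal{E}_\eta$. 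Applying this at each $s\in S$ shows $\mathcal{C}_S(X)\to\widehat{\mathcal{C}}_S(X)$ is an equivalence, and composing with (1) yields (2).

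The step requiring the most care is the local verification in (1) that the glued sheaf $\mathcal{F}$ is locally free at the points of $S$ with stalk precisely $M_s$: one must check that intersecting with the lattices $M_s$ inside $j_*\mathcal{E}$ is compatible with localisation at $s$ and produces exactly $M_s$ rather than a sub- or super-lattice. This is where regularity of $X$ enters, via the fact that over a discrete valuation ring every finitely generated torsion-free module is free of well-defined rank, so the candidate stalk is automatically a lattice in $\mathcal{E}_\eta$ and can be identified with $M_s$ by a length computation; once this is in place, the reduction in (2) is a purely commutative-algebraic application of Beauville--Laszlo.
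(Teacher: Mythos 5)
Your argument is correct, but it is organised differently from the paper's proof. The paper proceeds by induction on $\abs{S}$: the case $\abs{S}=1$ is quoted as the Beauville--Laszlo theorem (for both the $\mathcal{O}_{X,s}$- and the $\widehat{\mathcal{O}_{X,s}}$-version), and the inductive step rewrites the statement as an equivalence $\operatorname{Bun}_X \cong \operatorname{Bun}_{X\setminus\{s\}}\times_{\operatorname{fMod}_{K_X}}\operatorname{fMod}_{\mathcal{O}_{X,s}}$ and peels off one point of $S$ at a time, applying the hypothesis to the curve $X\setminus\{s\}$. You instead treat all of $S$ at once: part (1) is deduced from fpqc descent along $U\sqcup\coprod_{s\in S}\operatorname{Spec}(\mathcal{O}_{X,s})\to X$, whose pairwise fibre products you correctly identify (the key point being that the mixed and distinct-point overlaps are all $\operatorname{Spec}(K_X)$, so the descent datum is exactly a triple of $\mathcal{C}_S(X)$, the remaining cocycle conditions being automatic), and part (2) is obtained from (1) by the lattice correspondence $M_s\leftrightarrow\widehat{M_s}$ over the DVR $\mathcal{O}_{X,s}$, which is where Beauville--Laszlo actually enters. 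This separation is clean and arguably more transparent than the induction: it isolates (1) as pure descent and (2) as the genuine BL input; the paper's route, on the other hand, is shorter to write given that it invokes BL as a black box for one puncture and reuses the fibre-product-of-categories formulation. One caveat: your explicit quasi-inverse $\mathcal{F}\subseteq j_*\mathcal{E}$ is not needed once you invoke effectivity of fpqc descent for quasi-coherent modules (plus the fact that finite locally free is fpqc-local and finite locally free over a local ring is free), and as written it is the weakest part of the text --- ``stalkwise finitely generated plus $X$ noetherian'' does not by itself give coherence (you would first need quasi-coherence of $\mathcal{F}$), and no length computation is required since $\mathcal{F}_s$ is literally $u_s(M_s)$. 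I would either drop that construction or justify quasi-coherence properly; with the descent argument carrying the proof, the proposal stands.
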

	\begin{proof}
		
		We proceed by induction on $\abs{S}.$
		If $S=\{s\}$ then the result is a consequence of the Beauville-Lazlo Theorem.
		For $\abs{S}>1$ we will explain the proof for $\mathcal{C}_S(X)$ the case $\widehat{C}_S(X)$ being similar. 
		For later use we remark that we can rephrase the result for $\mathcal{C}_S(X)$ as an equivalence 
		$$\operatorname{Bun}_X \cong \operatorname{Bun}_{X\setminus S} \times_{\operatorname{fMod}_{K_X}} \operatorname{fMod}_{\mathcal{O}_{X,s}},$$ where $\operatorname{fMod}_{R}$ denotes the category of finite free $R$-modules. 
		Suppose $\abs{S}>d$ and suppose that the theorem is true for every curve and every finite set of cardinality $<d.$  Let $s \in S.$ 
		The functor sending $\mathcal{E}$ to $(\mathcal{E}_{X\setminus \{s\}}, \mathcal{E}_s,c_s)$ defines, in particular, a vector bundle on the curve $Y:=X\setminus s.$ Applying the induction hypothesis to the curve $Y$ and the set $T = S\setminus \{s\}.$ we obtain that the functor sending $\mathcal{E}_{X\setminus{s}}$ to the tuple $(\mathcal{E}_{U}, (\mathcal{E}_t)_{t \in T}, (c_{t})_{t \in T})$ is an equivalence between the category
		$\operatorname{Bun}_{Y}$ and the category 
		$\mathcal{C}(Y)_T.$
		Consider the functor $\mathcal{C}_T(Y) \to \operatorname{fMod}_{K_X}$ sending a tuple $(\mathcal{F},M_t,u_t)$ to the $K_X$-vector space $\mathcal{F}_\eta.$
		We have an obvious forgetful functor $\mathcal{C}_S(X) \to \mathcal{C}_T(Y).$
		By unwinding the definition we have an equivalence of categories
		$\mathcal{C}_S(X) \cong \mathcal{C}_Y(T) \times_{\operatorname{fMod}_{K_X}} \operatorname{fMod}_{\mathcal{O}_{X,s}}.$
		Combining this with the equivalence
		$$\operatorname{Bun}_X \cong \operatorname{Bun}_{Y} \times_{\operatorname{fMod}_{K_X}} \operatorname{fMod}_{\mathcal{O}_{X,s}}$$ 
		yields the claim.

	\end{proof}
	\subsubsection{Equivariant vector bundles on the Fargues--Fontaine curve}

	In \cite{farguesfontainecourbe} Fargues and Fontaine attach to a perfectoid field $F$ of characteristic $p$ and a non-Archimedean local field $E$ a curve $X_{E,F}.$ 
	Explicitly 
	$X_{E,F} = \operatorname{Proj}(P_{E,\pi_E})$ with the graded ring \begin{equation}P_{{E,F},\pi_E} = \bigoplus_{n \in \NN_0} (B_{E,F}^+)^{\varphi_E = \pi_E^n}, \end{equation}
	where $B_{E,F}^+$ is the completion of $W(o_F)_{E}[1/p]$ with respect to the family of ``Gauß norms'' $\abs{-}_{\rho}$ with 
	\begin{align*}
		\abs{-}_\rho \colon W(o_F)_{E}[1/\pi_E,1/[\varpi_F]] &\to \RR_{\geq0}\\
		\sum_{k \gg -\infty} [a_k]\pi_E^k& \mapsto \sup_k \abs{a_k}\rho^k,
	\end{align*}
	$\varphi_E$ denotes the continuous extension of the $q = \abs{o_E/\pi_Eo_E}$-Frobenius on $W(F)_E,$ $\varpi_F$ is a pseudo-uniformiser of $F$ and $\rho \in [0,1)$. 
	For now let $P = \bigoplus_{n \in \NN_0} P_n$ be a graded ring and for $d>0$ we set $P^{(d)} := \bigoplus_{n\in \NN_0} P_{dn},$ which we view as a graded ring with $P_{dn}$ being the homogenous elements of degree $n.$
	
	As usual, we denote for $f \in P$ homogenous of degree $>0$ and $H \subset P$ a family of homogenous elements 
	\begin{itemize}
		\item The fundamental open subset $D_+(f):= \{\mathfrak{p} \in \operatorname{Proj}(P) \mid f \notin \mathfrak{p}\}.$
		\item The closed subsets $V_+(H):= \{\mathfrak{p} \in \operatorname{Proj}(P) \mid H\subseteq \mathfrak{p} \}.$
	\end{itemize}
	For later use we recall that 
	$D_+(fg) = D_+(f)\cap D_+(g)$ and that 
	$D_+(f)$ is isomorphic  to the affine scheme $\operatorname{Spec}(P[1/f]_0),$ where $P[1/f]_0$ denotes the $0$-graded part of $P[1/f]$ (cf. \cite[\href{https://stacks.math.columbia.edu/tag/00JP}{Tag 00JP,Tag 01MB}]{stacks-project}).
	\begin{thm} \label{thm:Xprop} Suppose $F$ is algebraically closed. 
		Let $X = X_{E,F}$ and $P = P_{E,F,\pi_E}.$ Then: 
		\begin{enumerate}
			\item Setting $\deg(x)=1$ for all $x \in \abs{X}$ turns $X$ into a complete curve with field of definition $E.$
			\item For every $t \in P_1 \setminus \{0\}$ the set $V_+(t)$ consists of a single closed point $\infty_t$ and the map $t \mapsto \infty_t$ defines a bijection 
			$$(P_1 \setminus \{0\})/E^\times \to \abs{X}$$
			\item For the standard open $D_+(t)=X \setminus \{\infty_t\}$ we have that the ring
			$\mathcal{O}_X(X \setminus \{\infty_t\}) = (P[1/t])_0 = (B^+_{E,F}[1/t])^{\varphi_E=1}$ is a PID. 
			\item The residue field $C_t$ at $\infty_t$ is complete, algebraically closed and we have a canonical isomorphism $C_t^\flat \cong F.$ In other words, $C_t$ is an untilt of $F.$
			\item The isomorphism $C_t^\flat \cong F$ induces an isomorphism 
			$$\widehat{\mathcal{O}_{X,\infty_t}} \cong \Bdr^+(C_t),$$ where $\Bdr^+(C_t)$ denotes the completion of $W(o_{C_t}^\flat)[1/\pi_L]$ for the topology induced by $$\ker(\theta_t \colon W(o_{C_t}^\flat)[1/\pi_L] \to C_t )$$
			
		\end{enumerate}
	\end{thm}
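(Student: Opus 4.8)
All five items are a recollection of the foundational results of \cite{farguesfontainecourbe}, where the curve $X_{E,F}$ is constructed and studied for an arbitrary non-Archimedean local field $E$. The plan is therefore to point to the relevant parts of loc.\ cit.\ and to supply only the small amount of bookkeeping needed to match our normalisations. I would treat $(1)$--$(5)$ in the stated order, as each feeds into the next.

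For $(1)$, one first recalls that $P_0 = (B^+_{E,F})^{\varphi_E=1}$ and that, when $F$ is algebraically closed, this ring equals $E$; in particular $P_0$ is a field, so $X = \operatorname{Proj}(P)$ is integral and connected, and $\mathcal{O}_X(X) = P_0 = E$. That $X$ is noetherian, regular, separated and of dimension $1$, and that $\deg\Div(f) = 0$ for every $f \in K_X^\times$ once we put $\deg \equiv 1$ on $\abs{X}$, are the basic structural results on the curve in \cite{farguesfontainecourbe}; by \cite[Lemme 5.1.5]{farguesfontainecourbe} completeness then forces $\mathcal{O}_X(X)$ to be a subfield of $K_X$, consistently with the computation just made, so $E$ is the field of definition. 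For $(2)$, the mechanism is that every nonzero $t \in P_1$ is a primitive element of degree $1$, so that $B^+_{E,F}/tB^+_{E,F}$ is an untilt of $F$; consequently $tB^+_{E,F}$ is a homogeneous prime, $V_+(t)$ is a single closed point $\infty_t$ of degree $1$, and conversely every closed point of $X$ arises this way. If $V_+(t) = V_+(t')$ then $t$ and $t'$ generate the same height-one homogeneous prime, hence $t' = \lambda t$ with $\lambda \in P_0^\times = E^\times$, which gives the asserted bijection $(P_1 \setminus \{0\})/E^\times \xrightarrow{\sim} \abs{X}$.

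For $(3)$: since $V_+(t) = \{\infty_t\}$ we have $D_+(t) = X \setminus \{\infty_t\}$, which by the Proj formalism recalled above is the affine scheme $\operatorname{Spec}((P[1/t])_0)$. An element of $(P[1/t])_0$ has the form $x/t^n$ with $x \in P_n = (B^+_{E,F})^{\varphi_E = \pi_E^n}$, and $\varphi_E(x/t^n) = \pi_E^n x/(\pi_E^n t^n) = x/t^n$; conversely any $\varphi_E$-fixed element of $B^+_{E,F}[1/t]$ is of this shape, so $(P[1/t])_0 = (B^+_{E,F}[1/t])^{\varphi_E=1}$, and that this ring is a principal ideal domain is the corresponding result of \cite{farguesfontainecourbe} (the analogue of $\fB_e$ being a PID in classical $p$-adic Hodge theory). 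For $(4)$ and $(5)$: by construction the primitive degree-one element $t$ determines an untilt $C_t$ of $F$, realised as the quotient of $W(o_F)_E[1/\pi_E]$ by $\ker\theta_t = (t)$; $C_t$ is a complete non-Archimedean field with $C_t^\flat \cong F$, and since $F$ is algebraically closed so is $C_t$ (a perfectoid field is algebraically closed if and only if its tilt is). One then has $\kappa(\infty_t) = C_t$, and the completed local ring $\widehat{\mathcal{O}_{X,\infty_t}}$ is the $\ker\theta_t$-adic completion of $B^+_{E,F}$, which is precisely $\Bdr^+(C_t)$ as defined in the statement; this identification is carried out in \cite{farguesfontainecourbe}.

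There is no genuine mathematical obstacle here, since every input already appears in \cite{farguesfontainecourbe}; the only points requiring care are $(a)$ tracking the passage from $\QQ_p$- to $E$-coefficients, i.e.\ the ramified Witt vectors $W(-)_E$ and the $\pi_E$-Frobenius $\varphi_E$, which is done uniformly in loc.\ cit., and $(b)$ matching the completed-stalk description in $(5)$ with the usual $\Bdr^+$ of the untilt $C_t$, which comes down to checking that $\ker\theta_t$ is principal and that the completion is $\pi_E$-torsion-free and separated for the $\ker\theta_t$-adic topology --- all standard.
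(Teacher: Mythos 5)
Your proposal is correct and follows the same route as the paper, which simply cites \cite[Théorème 6.5.2]{farguesfontainecourbe} for all five items; you merely add some (accurate) bookkeeping about primitive degree-one elements, the identification $(P[1/t])_0 = (B^+_{E,F}[1/t])^{\varphi_E=1}$, and the untilt $C_t$. Nothing further is needed.
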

	\begin{proof}
		See  \cite[Théorème 6.5.2  6.5.2]{farguesfontainecourbe}. 
	\end{proof}
	Our case of interest will be $F = \CC_p^\flat$ and $E/\QQ_p$ finite. 
	We will henceforth write $X_{E}$ for $X_{\CC_p^\flat,E}.$
	
	By \cite[Théorème 6.5.2 ]{farguesfontainecourbe} there exists a canonical isomorphism 
	\begin{equation}
		\label{eq:basechangeFF} X_{F,E'} \cong E' \times_{E} X_{F,E} \end{equation} for any finite extension $E'/E.$
	It is obtained by applying the canonical isomorphism (for any graded ring $S$)
	\begin{equation}\operatorname{Proj}(S) \cong \operatorname{Proj}(S^{(d)}) \end{equation} together the isomorphism of graded algebras
	\begin{equation}E'\otimes_E P_{E,\pi_E} = P_{E',\pi_{E'}}^{([E:E'])}.\end{equation}
	Fargues and Fontaine show that for $E=\QQ_p$ the category of $G_K$-equivariant vector bundles on $X_{\QQ_p}$ is equivalent to the category of $G_K$-$B$-pairs. 
	More precisely they prove the following 
	\begin{prop} \label{prop:FFEquivalence}
		Let $\infty \in X_{\QQ_p}(\CC_p)$ be the closed point corresponding to $t_{cyc}.$
		We have 
		\begin{enumerate}
			\item For the completed stalk $\widehat{\mathcal{O}_{X,\infty}}$ we have $\widehat{\mathcal{O}_{X,\infty}} \cong \Bdr^+.$
			\item $U:=X \setminus \{\infty\} \cong \operatorname{Spec}(\fB_e).$
			\item The functor associating to a $G_K$-equivariant vector bundle $\mathcal{E}$ the pair $(\mathcal{E}(U), \widehat{\mathcal{E}_\infty})$ (together with the obvious glueing morphism over $\Bdr$) is an equivalence between the category of $G_K$-equivariant vector bundles on $X_{\QQ_p}$ and the category of $G_K$-$B$-pairs. 
		\end{enumerate}
	\end{prop}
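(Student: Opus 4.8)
The plan is to deduce Proposition \ref{prop:FFEquivalence} directly from Theorem \ref{thm:Xprop} together with the Beauville--Laszlo glueing result Proposition \ref{prop:beauvillelazlo}. Part (1) is essentially immediate: in the case $E = \QQ_p$, $F = \CC_p^\flat$, applying Theorem \ref{thm:Xprop}(4)--(5) to the element $t = t_{cyc} \in P_1 \setminus \{0\}$ gives $\widehat{\mathcal{O}_{X,\infty}} \cong \Bdr^+(C_{t_{cyc}})$, and one identifies $C_{t_{cyc}}$ with $\CC_p$ via the untilt corresponding to $t_{cyc}$, so that $\Bdr^+(C_{t_{cyc}}) = \Bdr^+$ in the classical normalisation. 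Part (2) follows from Theorem \ref{thm:Xprop}(3): $X \setminus \{\infty\} = D_+(t_{cyc}) \cong \operatorname{Spec}((B^+[1/t_{cyc}])^{\varphi_p = 1})$, and one recalls the classical identification $(B^+[1/t_{cyc}])^{\varphi_p = 1} = \fB_{\mathrm{cris}}^{\varphi_p = 1} = \fB_e$ from $p$-adic Hodge theory; being a PID (again Theorem \ref{thm:Xprop}(3)) matches the known structure of $\operatorname{Spec}(\fB_e)$.

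For part (3), I would first forget the Galois action and invoke Proposition \ref{prop:beauvillelazlo}(2) with $S = \{\infty\}$: vector bundles on $X_{\QQ_p}$ are equivalent to triples $(\mathcal{E}, M_\infty, u_\infty)$ where $\mathcal{E}$ is a bundle on $U \cong \operatorname{Spec}(\fB_e)$ (i.e.\ a finite projective $\fB_e$-module), $M_\infty$ is a finite free $\widehat{\mathcal{O}_{X,\infty}} \cong \Bdr^+$-module, and $u_\infty$ is an isomorphism after base change to $\widehat{K_{X,\infty}}$. Here I need the identification $\widehat{K_{X,\infty}} \cong \Bdr$ — this is the fraction field of $\Bdr^+$, which is standard, and matches the completion of the function field $K_X$ at the valuation $\ord_\infty$. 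Since $\fB_e$ is a PID, every finite projective $\fB_e$-module is free, so $\mathcal{E}(U) = W_e$ is exactly the $\fB_e$-module datum of a $B$-pair, $M_\infty = W_{\mathrm{dR}}^+$ is the lattice datum, and $u_\infty$ (viewing $W_{\mathrm{dR}} = \Bdr \otimes_{\fB_e} W_e$) is the compatibility. Thus the underlying objects match, and a morphism of bundles is exactly a pair of compatible morphisms, i.e.\ a morphism of $B$-pairs.

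It then remains to carry the $G_K$-action through this equivalence. The point is that $G_K \subset G_{\QQ_p}$ acts on $X_{\QQ_p}$ (functorially, through its action on $\CC_p^\flat$ hence on $W(\mathcal{O}_{\CC_p^\flat})[1/p]$ and on the graded ring $P$), fixing the point $\infty$ since $t_{cyc}$ spans a $G_{\QQ_p}$-stable line in $P_1$; hence $G_K$ acts compatibly on $U$, on $\fB_e = \mathcal{O}_X(U)$, on $\widehat{\mathcal{O}_{X,\infty}} = \Bdr^+$ and on $\widehat{K_{X,\infty}} = \Bdr$, recovering the usual Galois actions on these period rings. A $G_K$-equivariant vector bundle is then, by descent applied termwise to the equivalence of Proposition \ref{prop:beauvillelazlo}(2), the same as a triple $(W_e, W_{\mathrm{dR}}^+, u_\infty)$ whose three pieces each carry a semilinear continuous $G_K$-action and whose glueing isomorphism is $G_K$-equivariant — precisely the data of a $G_K$-$B$-pair. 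Continuity of the action on $W_e$ matches the continuity requirement in Definition \ref{def:bpair}; I would note that continuity is automatic/compatible because the equivariant structure on the bundle is by definition a continuous one and the functors of Proposition \ref{prop:beauvillelazlo} are built from the structure sheaf, on which the topology is the one inducing the $p$-adic topologies on $\fB_e$ and $\Bdr^+$.

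The main obstacle I anticipate is purely bookkeeping rather than conceptual: carefully matching normalisations so that the abstract residue field $C_{t_{cyc}}$ of Theorem \ref{thm:Xprop}(4) is \emph{canonically} $\CC_p$ (and not merely an abstract untilt), so that $\Bdr^+(C_{t_{cyc}})$ is literally Fontaine's $\Bdr^+$ with its standard $G_{\QQ_p}$-action, and similarly that $(B^+[1/t_{cyc}])^{\varphi_p=1}$ is literally $\fB_e$. Once these identifications are pinned down, descent along Proposition \ref{prop:beauvillelazlo} is formal. I would therefore spend the bulk of the write-up on the identification of period rings and on checking that the $G_K$-action is carried correctly through each functor in the inductive proof of Proposition \ref{prop:beauvillelazlo}.
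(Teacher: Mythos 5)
The paper gives no independent argument for this proposition: its ``proof'' is the citation to \cite[Section 10.1]{farguesfontainecourbe}. Your reconstruction follows the same standard route as that source, and it is also exactly the strategy the paper itself uses for the general-$E$ analogue (Proposition \ref{prop:bundletoBpair} combined with Theorem \ref{thm:bundequiv}): identify the completed stalk and the punctured curve via Theorem \ref{thm:Xprop} applied to $t_{cyc}$, glue via Proposition \ref{prop:beauvillelazlo} with $S=\{\infty\}$, observe that $\infty$ is fixed by $G_{\QQ_p}$ because $t_{cyc}$ spans a Galois-stable line in $P_1$, and then pass to objects with semilinear $G_K$-action. Parts (1), (2) and the non-equivariant part of (3) are fine as you present them, including the identification $\widehat{K_{X,\infty}}\cong \Bdr$ and the use of the PID property of $\fB_e$ to replace projective by free modules.

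The one genuine gap is your treatment of continuity. A $G_K$-equivariant vector bundle on the scheme $X_{\QQ_p}$ is an algebraic object --- an action of $G_K$ on $X$ together with isomorphisms $g^*\mathcal{E}\cong\mathcal{E}$ satisfying a cocycle condition --- and carries no topology, so it is not correct that ``the equivariant structure on the bundle is by definition a continuous one.'' The continuity of the induced semilinear action on $\mathcal{E}(U)$ (as an $\fB_e$-module) and on $\widehat{\mathcal{E}}_\infty$, which Definition \ref{def:bpair} demands of a $B$-pair, is a nontrivial automatic-continuity statement; it is precisely the point for which the paper invokes \cite[Proposition 9.1.3]{farguesfontainecourbe} in the proof of Proposition \ref{prop:bundletoBpair}. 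Without this input your functor is not known to land in the category of $G_K$-$B$-pairs; the converse direction (glueing an equivariant bundle from a continuous $B$-pair) is the formal one. So the sentence claiming continuity is ``automatic/compatible'' should be replaced by an appeal to that result (or an argument in the spirit of almost $\CC_p$-representations); the remaining bookkeeping you describe --- the canonical identification of $C_{t_{cyc}}$ with $\CC_p$, of $(B^+[1/t_{cyc}])^{\varphi_p=1}$ with $\fB_e$, and the compatibility of the $G_K$-actions with the glueing equivalence --- is indeed routine.
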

	\begin{proof}
		See \cite[Section 10.1]{farguesfontainecourbe}.
	\end{proof}
	The following is a natural extension of the results of Fargues--Fontaine to the case where $E$ is not necessarily $\QQ_p.$ Similar considerations are used in \cite{pham2023prismaticfcrystalsanalyticcrystalline} in the analytic case. 
	\begin{prop} \label{prop:BVLE} Let $\{s_1,\dots,s_d\}= S\subset \abs{X_E}$ be $d$ distinct points, for each $i$ let $t_i \in P_1$ be a representative of $s_i.$ Let $U=X_E\setminus{S}.$ Let $\mathcal{E} \in \operatorname{Bun}_{X_E}.$
		\begin{enumerate}
			\item We have $B_{S,e,E}:=\mathcal{O}_X(U)= (B_{F,E}^+[1/(\prod_{i=1}^d t_i)])^{\varphi_E=1}.$
			\item $B_{S,e,E}$ is a PID and, in particular, $\operatorname{Bun}_U$ is equivalent to the category of finitely generated free $B_{S,e,E}$-modules and the category $\operatorname{Bun}_{X_E}$ is equivalent to the category of
			triples $$(N, (M_s)_{s \in S}, (u_s)_{s \in S})$$ consisting of 
			\begin{itemize}
				\item A finite free $B_{S,e,E}$-module $N,$
				\item For each $s \in S$ a finite free $\widehat{\mathcal{O}_{X,s}}$-module $M_s,$
				\item For each $s \in S$ an isomorphism
				$$u_s \colon M_s \otimes_{\widehat{\mathcal{O}_{X,s}}}\widehat{K_{X,s}} \xrightarrow{\cong} N \otimes_{B_{S,e,E}} \widehat{K_{X,s}},$$
			\end{itemize}
		\end{enumerate}
	\end{prop}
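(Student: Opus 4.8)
The plan is to reduce the statement to the structural results already quoted, namely Theorem~\ref{thm:Xprop} and the Beauville--Laszlo type gluing of Proposition~\ref{prop:beauvillelazlo}; the only genuinely new point is that removing finitely many degree-one points from $X_E$ leaves the ring of functions a PID. Throughout one uses that $F=\CC_p^\flat$ is algebraically closed, so that Theorem~\ref{thm:Xprop} applies.

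\emph{Part (1).} By Theorem~\ref{thm:Xprop}(2) each $s_i$ equals $\infty_{t_i}=V_+(t_i)$, so $X_E\setminus\{s_i\}=D_+(t_i)$, and since $D_+(fg)=D_+(f)\cap D_+(g)$ we get $U=\bigcap_{i=1}^d D_+(t_i)=D_+(t)$ for $t:=\prod_{i=1}^d t_i\in P_d$. Hence $U$ is affine with $\mathcal{O}_X(U)=(P[1/t])_0$ by the standard $\operatorname{Proj}$ fact recalled above, and the identification of this degree-zero part is the same computation as in Theorem~\ref{thm:Xprop}(3), now run with $t$ homogeneous of degree $d$: since $\varphi_E(t)=\pi_E^d t$, an element $x/t^k$ with $x\in P_{dk}=(B_{F,E}^+)^{\varphi_E=\pi_E^{dk}}$ is $\varphi_E$-fixed, and conversely, writing a $\varphi_E$-fixed $y\in B_{F,E}^+[1/t]$ as $y=x/t^k$ with $x\in B_{F,E}^+$, the relation $\varphi_E(y)=y$ forces $\varphi_E(x)=\pi_E^{dk}x$ (using that $t$ is a non-zero-divisor and $\pi_E$ a unit in $B_{F,E}^+$), so $x\in P_{dk}$. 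As inverting $t$ is the same as inverting each $t_i$, this gives $\mathcal{O}_X(U)=(B_{F,E}^+[1/\prod_{i=1}^d t_i])^{\varphi_E=1}$.

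\emph{Part (2).} Set $A:=\mathcal{O}_{X_E}(X_E\setminus\{s_1\})=(P[1/t_1])_0$, which is a PID by Theorem~\ref{thm:Xprop}(3). The element $g:=(t_2\cdots t_d)/t_1^{\,d-1}$ lies in the degree-zero part $A$ of $P[1/t_1]$ and $A[1/g]=(P[1/(t_1\cdots t_d)])_0=B_{S,e,E}$; as a localization of a PID this is again a PID. Since $U=\operatorname{Spec}(B_{S,e,E})$ is affine, $\operatorname{Bun}_U$ is the category of finite projective $B_{S,e,E}$-modules, which over a PID are exactly the finite free ones. Finally, Proposition~\ref{prop:beauvillelazlo}(2) applied to $S$ gives the equivalence $\operatorname{Bun}_{X_E}\xrightarrow{\ \sim\ }\widehat{\mathcal{C}}_S(X_E)$, $\mathcal{E}\mapsto(\mathcal{E}(U),(\widehat{\mathcal{E}_s})_s,(c_s)_s)$; replacing $\mathcal{E}(U)$ by the associated finite free $B_{S,e,E}$-module $N$ (with $\mathcal{E}_\eta=N\otimes_{B_{S,e,E}}K_X$, hence $\mathcal{E}_\eta\otimes_{K_X}\widehat{K_{X,s}}=N\otimes_{B_{S,e,E}}\widehat{K_{X,s}}$) turns $\widehat{\mathcal{C}}_S(X_E)$ into precisely the category of triples $(N,(M_s)_s,(u_s)_s)$ of the statement.

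I expect no real obstacle: the two points needing a little care are that the $\operatorname{Proj}$-formalism and the $\varphi_E$-fixed-point computation of part (1) go through verbatim with $t$ of degree $d>1$ rather than $1$, and that the PID property is genuinely inherited, for which the one-step localization argument above is the cleanest (alternatively one can note that $B_{S,e,E}$ is a Noetherian regular domain of dimension $\le 1$ and that $\operatorname{Pic}(U)=0$, since $\operatorname{Pic}(X_E)\xrightarrow{\deg}\ZZ$ is an isomorphism and the $s_i$ have degree one, whence again a PID).
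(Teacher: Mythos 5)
Your proposal is correct and follows essentially the same route as the paper: part (1) via $U=\bigcap_i D_+(t_i)=D_+(\prod_i t_i)$ and the identification of the degree-zero part of $P[1/\prod_i t_i]$ with the $\varphi_E$-invariants, and part (2) via Proposition \ref{prop:beauvillelazlo}(2). Your localisation argument for the PID property is exactly the justification the paper gives (later, in the definition of $B_{e,T}$: a localisation of the PID from Theorem \ref{thm:Xprop}(3)), so there is nothing to flag beyond your having spelled out details the paper leaves implicit.
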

	\begin{proof}
		By Theorem \ref{thm:Xprop} We have $B_{S,e,E}= \bigcap_i D_+(t_i) = D_+(\prod t_i).$
		We thus obtain $\mathcal{O}_{X_E}(U) = P_{E}[1/\prod t_i]_0.$
		The second part follows from Proposition \ref{prop:beauvillelazlo}.
	\end{proof}
	\subsubsection{The elements $t_{\infty_\sigma}$ and $t_\sigma$} \label{subsec:elementst}
	In light of Proposition \ref{prop:BVLE} and Proposition \ref{prop:FFEquivalence}, we need to understand the (equivalence class in $P_1\setminus \{0\}/E^\times$ of the ) elements $t_{\infty_\sigma}$ corresponding to the preimage of $\infty \in X_{\QQ_p},$ which is in bijection with the embeddings $\sigma \colon E\to \CC_p.$ Explicitly, this bijection is given by sending an embedding $\sigma$ to the $\CC_p$-valued point $(\infty,\sigma)$ of $X_E \cong X_{\QQ_p} \times_{\QQ_p} E$ (cf. \cite[Theorem A.1]{pham2023prismaticfcrystalsanalyticcrystalline}).
	For a uniformiser $\pi_E$ of $E$ a natural base point is the element $t_{LT} \in B_E^+,$ which (up to units) is characterised by the fact that $G_E$ acts on $t_{LT}$ via the Lubin--Tate character $\chi_{LT} \colon G_E \to o_E^\times.$ One has $\varphi_q(t_{LT}) = \pi_E t_{LT}$ and hence $t_{LT}$ defines a point $\infty_{\id} \in X_E.$ Explicitly $t_{LT}$ is given as $t_{LT} = \log_{LT}(u),$ where $u$ is the modified Teichmüller lift of a generator $u=(u_n)_n$ of $\varprojlim LT[\pi_L^n]$ (cf. \cite[Section 2.1]{Schneider2017}). 
	\begin{defn} Let $E/\QQ_p$ be Galois. We let $\sigma \in \operatorname{Gal}(E/\QQ_p)$ act on $B^+_E= E\otimes_{E_0}B^+_{\QQ_p}$ via $\sigma \otimes \varphi_p^{v(\sigma)},$ where $v(\sigma)$ is the unique integer $v \in \{0,\dots,f-1\}$ such that $\sigma_{\mid E_0} = \varphi_p^v.$
		We denote by $t_{\sigma} \in B^+_E$ the element 
		$\sigma(t_{LT}),$ such that $t_{\id} = t_{LT}.$ 
	\end{defn}
	\begin{lem}  Let $E/\QQ_p$ be Galois and 
		let $t_N = \prod_{\sigma \in \operatorname{Gal}(E/\QQ_p)} t_{\sigma}.$
		\begin{enumerate}
			\item We have $\varphi_E(t_\sigma) = \sigma(\pi_E)t_\sigma$ and $g(t_\sigma) = \sigma(\chi_{LT}(g))t_{\sigma}$ for $g \in G_E.$
			\item There exists $x \in \breve{E}$ such that 
			$t_N = x t_{cyc}.$
		\end{enumerate}
	\end{lem}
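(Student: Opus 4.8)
The plan is to verify the two statements directly from the known transformation properties of $t_{LT}$ under $\varphi_E$ and $G_E$, and from the structure of $t_{\mathrm{cyc}}$ as an element of $B^+_{\QQ_p} \subset B^+_E$.

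\textbf{Part (1).} First I would compute $\varphi_E(t_\sigma)$. Writing $B^+_E = E \otimes_{E_0} B^+_{\QQ_p}$, the operator $\varphi_E = \varphi_q$ acts as $\mathrm{id}_E \otimes \varphi_p^f$ (where $q = p^f$), while $\sigma$ acts as $\sigma \otimes \varphi_p^{v(\sigma)}$. Since $t_{LT}$ satisfies $\varphi_E(t_{LT}) = \pi_E t_{LT}$, and $\sigma$ commutes with $\varphi_E$ on $B^+_E$ (both are built from commuting operators on the two tensor factors: $\sigma$ commutes with $\varphi_p^f$ on $B^+_{\QQ_p}$, and everything commutes on $E$ since $\sigma$ is $\QQ_p$-linear and $\varphi_E$ is the identity on $E$), we get $\varphi_E(t_\sigma) = \varphi_E(\sigma(t_{LT})) = \sigma(\varphi_E(t_{LT})) = \sigma(\pi_E t_{LT}) = \sigma(\pi_E) t_\sigma$. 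Here I am using that $\sigma$ is a ring homomorphism and $\pi_E \in E$, so $\sigma(\pi_E t_{LT}) = \sigma(\pi_E)\sigma(t_{LT})$. For the Galois action: for $g \in G_E$ we have $g(t_{LT}) = \chi_{LT}(g) t_{LT}$, and I would argue that the actions of $g \in G_E$ and of $\sigma \in \operatorname{Gal}(E/\QQ_p)$ on $B^+_E$ commute --- this is where one has to be slightly careful, since $\sigma$ here is the \emph{auxiliary} operator $\sigma \otimes \varphi_p^{v(\sigma)}$ attached to a choice of lift, not a genuine Galois automorphism of the coefficient field, but it commutes with $G_E$ because $G_E$ acts $E$-linearly (trivially on the $E$-factor) and commutes with $\varphi_p$ on $B^+_{\QQ_p}$. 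Then $g(t_\sigma) = g(\sigma(t_{LT})) = \sigma(g(t_{LT})) = \sigma(\chi_{LT}(g) t_{LT}) = \sigma(\chi_{LT}(g)) t_\sigma$, using again that $\sigma$ is multiplicative and $\chi_{LT}(g) \in o_E^\times \subset E$.

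\textbf{Part (2).} For $t_N = \prod_{\sigma \in \operatorname{Gal}(E/\QQ_p)} t_\sigma$, combining the formulas from part (1) over all $\sigma$ gives $\varphi_E(t_N) = \prod_\sigma \sigma(\pi_E) \cdot t_N = N_{E/\QQ_p}(\pi_E) \cdot t_N$, and $g(t_N) = \prod_\sigma \sigma(\chi_{LT}(g)) \cdot t_N = N_{E/\QQ_p}(\chi_{LT}(g)) \cdot t_N$ for $g \in G_E$. By local class field theory (the norm-compatibility of Lubin--Tate and cyclotomic characters), $N_{E/\QQ_p} \circ \chi_{LT,E}$ agrees with the cyclotomic character $\chi_{\mathrm{cyc}}$ on $G_E$ up to the relevant normalisation, so $g(t_N) = \chi_{\mathrm{cyc}}(g) t_N$ for $g \in G_E$, which is exactly how $G_E$ acts on $t_{\mathrm{cyc}}$. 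Therefore $g(t_N/t_{\mathrm{cyc}}) = t_N/t_{\mathrm{cyc}}$ for all $g \in G_E$, i.e. $t_N/t_{\mathrm{cyc}}$ is $G_E$-invariant. One also checks that both $t_N$ and $t_{\mathrm{cyc}}$ lie in the $\varphi_E$-isotypic components of the same eigenvalue up to units in $\breve E$: $\varphi_E(t_{\mathrm{cyc}}) = p^? t_{\mathrm{cyc}}$ with the appropriate power matching $N_{E/\QQ_p}(\pi_E)$ up to a unit of $o_{\breve E}$ (since $N_{E/\QQ_p}(\pi_E)$ and $p^{v_p(N(\pi_E))}$ differ by a unit). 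I would then invoke that $(B^+_E)^{G_E} = \breve E$ (the $G_E$-invariants of $B^+_E$, which follows since $(B^+)^{G_E} = (B^+_{\QQ_p})^{G_E} \otimes_{E_0} E$-type statements reduce to $\widehat{\QQ_p^{\mathrm{nr}}}$-computations; concretely $t_N/t_{\mathrm{cyc}}$ has no zeros or poles so is a unit in $B^+_E$, and the $G_E$-invariant units of $B^+_E$ lie in $\breve E^\times$). Hence $x := t_N/t_{\mathrm{cyc}} \in \breve E$, giving $t_N = x\, t_{\mathrm{cyc}}$.

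\textbf{Main obstacle.} The routine transformation computations are straightforward; the delicate point is Part (2): pinning down that the ratio $t_N/t_{\mathrm{cyc}}$ actually lands in $\breve E$ rather than some larger ring. The cleanest route is the divisor-theoretic one: $t_N$ and $t_{\mathrm{cyc}}$, viewed in $B^+_{\CC_p^\flat,E}$, are both $\varphi_E$-eigenvectors generating degree-one pieces, so each cuts out a single point of $X_E$ with multiplicity one; I must check these two divisors coincide (both supported at $\infty \in X_{\QQ_p}$ pulled back — but $t_N$ is a product over \emph{all} $\sigma$, so its divisor on $X_E = E \times_{\QQ_p} X_{\QQ_p}$ is the full fibre over $\infty$, which is exactly $\operatorname{Div}(p_E^* t_{\mathrm{cyc}})$). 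Once the divisors agree, the ratio is a global unit on $X_E$, and since $X_E$ is a complete curve with field of definition $E$, its units are... here one needs $(B^+_E)^{\varphi_E=1,\times}$ or rather the units of $B^+_E$ itself, landing in $\breve E$ — this uses that $W(\overline{\FF_q})_E[1/\pi_E] = \breve E$ is the subring of $\varphi$-bounded elements, or more simply that a $G_E$-invariant unit of $B^+_E$ lies in $\breve E$. I would present this divisor argument as the backbone, with the explicit norm-compatibility of $\chi_{LT}$ and $\chi_{\mathrm{cyc}}$ as the supporting input, and relegate the exact power of $p$ bookkeeping in $\varphi_E(t_{\mathrm{cyc}})$ to a remark.
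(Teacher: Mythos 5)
Part (1) of your argument is correct (and since the paper's own ``proof'' of this lemma is just the citation \cite[Proposition 3.4]{berger2021triangulable}, there is nothing to compare it against line by line). The genuine gap is in part (2). The class-field-theoretic input you invoke is not an equality: on $G_E$ the characters $N_{E/\QQ_p}\circ\chi_{LT}$ and $\chi_{cyc}$ differ by the unramified character determined by the unit $y=N_{E/\QQ_p}(\pi_E)/p^{f}$, which is nontrivial unless $\pi_E$ happens to have norm $p^{f}$. So $g(t_N)=N_{E/\QQ_p}(\chi_{LT}(g))\,t_N$ is \emph{not} $\chi_{cyc}(g)\,t_N$, and $t_N/t_{cyc}$ is \emph{not} $G_E$-invariant; it transforms by an unramified character. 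Your deduction would in fact prove too much: a $G_E$-invariant element of $\operatorname{Frac}(B_E^+)$ lies in $E$, which would force $\varphi_E(x)=x$, contradicting the eigenvalue bookkeeping you set aside as a remark ($\varphi_E(t_N)=N_{E/\QQ_p}(\pi_E)t_N$ while $\varphi_E(t_{cyc})=p^{f}t_{cyc}$; the paper records exactly $\varphi_E(x)=yx$ right after the lemma). This Frobenius/unramified discrepancy is the entire reason the statement says $x\in\breve{E}$ rather than $x\in E$, and it is the same subtlety the paper warns about when distinguishing $t_{\infty_\sigma}$ from $\sigma(t_{LT})$. Your divisor computation is the right backbone, but its final step again leans either on the false invariance or on unproved assertions (``no zeros or poles $\Rightarrow$ unit of $B_E^+$'', ``$\breve{E}$ is the subring of $\varphi$-bounded elements''), so as written nothing actually places $x$ in $\breve{E}$.

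The missing ingredient is an eigenvalue comparison carried out with units of $o_{\breve{E}}$: by the dévissage the paper uses for $\eta_\sigma$ in Proposition \ref{prop:elementetinftysigma} (possible because $o_{\breve{E}}/\pi_E$ has algebraically closed residue field), choose $\xi\in o_{\breve{E}}^\times$ with $\varphi_E(\xi)=\frac{\pi_E^{h}}{p^{f}}\xi$, where $h=[E:\QQ_p]$ and $\pi_E^{h}/p^{f}\in o_E^\times$. Then $\xi t_{cyc}$ and $\prod_\sigma\eta_\sigma t_\sigma=\prod_\sigma t_{\infty_\sigma}$ both lie in $P_h=(B_E^+)^{\varphi_E=\pi_E^{h}}$ and have the same divisor $\sum_{\sigma}[\infty_\sigma]$ on $X_E$ (your computation of the reduced fibre over $\infty$, each local order being $1$ since $t_{cyc}$ uniformises $\Bdr^+$). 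By the classification of degree-one eigenvectors in Theorem \ref{thm:Xprop}(2), applied factor by factor via the graded factoriality of $P$, two elements of $P_h$ with the same divisor differ by an element of $E^\times$; unwinding the $o_{\breve{E}}^\times$-rescalings gives $t_N=xt_{cyc}$ with $x\in\breve{E}^\times$. With this replacement for your invariance step the proof goes through; alternatively one can simply cite Berger's Proposition 3.4 as the paper does.
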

	\begin{proof} See \cite[Proposition 3.4]{berger2021triangulable}.
	\end{proof}
	Since $\varphi_E(t_{cyc}) = p^{f(E/\QQ_p)}t_{cyc}$ one can deduce from the Lemma above that $\varphi_E(x) = \frac{ \operatorname{Norm}_{E/\QQ_p}(\pi_E)}{p^{f(E/\QQ_p)}}x.$ In the proof of \cite[Proposition 3.4]{Berger2013} Berger shows that (for $E/\QQ_p$ unramified) $v_p(\theta(t_{cyc}/t_{\id})) = \frac{1}{p-1}-\frac{1}{q-1}.$ We give a refinement of this statement for general $E.$ 
	To this end we recall some results, that appear implicitly in the work of Fourquaux. 
	\begin{lem} \label{lem:bergermateo} Let $\sigma \in \operatorname{Gal}(L/\QQ_p).$ Then the map $\theta$ sends $t_{\sigma}$ to $\log_{LT}^{\sigma}(x_\sigma),$ where $$x_{\sigma} := \lim_{n \to \infty} [\pi_L^n]^{\sigma} (u_n^{p^{v(\sigma)}}) \in \CC_p.$$
		We have 
		\begin{equation*}
			v_{p}(\log_{LT}^{\sigma}(x_\sigma)) = \begin{cases*}
				\frac{p^{v(\sigma)}}{e(q-1)}+\frac{1}{e}  & if  $v(\sigma)>0,$\\
				\frac{1}{e(q-1)}+v_p(\pi_L-\sigma(\pi_L))  & if $v(\sigma)=0.$
			\end{cases*}
		\end{equation*}
	\end{lem}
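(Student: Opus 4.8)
The plan is to unwind the definition of $t_\sigma$ until everything is expressed through the functional equation of the Lubin--Tate logarithm, and then to reduce the valuation statement to an elementary estimate on iterates of $[\pi_L]$. Recall first the standing facts about the element $\omega=\omega_{LT}$ underlying $t_{LT}=\log_{LT}(\omega)$: one has $\varphi_q(\omega)=[\pi_L](\omega)$, and (this is the content of the ``modified Teichmüller lift'', cf. \cite[Section 2.1]{Schneider2017}) $\theta(\varphi_q^{-n}(\omega))=u_n$ for all $n\geq 0$; we also normalise the formal group by $[\pi_L](X)=\pi_L X+X^q$, which we may. First I would observe that the operator $\widetilde\sigma:=\sigma\otimes\varphi_p^{v(\sigma)}$ on $B_L^+=L\otimes_{L_0}B_{\QQ_p}^+$ commutes with $\varphi_q=\operatorname{id}_L\otimes\varphi_p^{f}$ (both equal $\sigma\otimes\varphi_p^{v(\sigma)+f}$) and acts by $\sigma$ on the $o_L$-coefficients of $\log_{LT}$ and of $[\pi_L]$. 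Applying $\widetilde\sigma$ to $t_{LT}=\log_{LT}(\omega)$ therefore gives $t_\sigma=\log_{LT}^\sigma(\omega_\sigma)$ with $\omega_\sigma:=\widetilde\sigma(\omega)$ and $\varphi_q(\omega_\sigma)=[\pi_L]^\sigma(\omega_\sigma)$, where $[\pi_L]^\sigma=[\sigma(\pi_L)]$ is multiplication by $\sigma(\pi_L)$ on the conjugate Lubin--Tate group and $\log_{LT}^\sigma$ its logarithm.

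Next I would compute $\theta(\omega_\sigma)$. Writing $y_n:=\theta(\varphi_q^{-n}(\omega_\sigma))$, the relation $\varphi_q^{-n}(\omega_\sigma)=[\pi_L]^\sigma(\varphi_q^{-n-1}(\omega_\sigma))$ (valid since $[\pi_L]^\sigma$ has $\varphi_q$-fixed coefficients) gives $y_n=[\pi_L]^\sigma(y_{n+1})$, hence $\theta(\omega_\sigma)=y_0=[\pi_L^n]^\sigma(y_n)$ for every $n$. On the other hand $y_n=\theta\big(\widetilde\sigma(\varphi_q^{-n}(\omega))\big)$, and unwinding $\widetilde\sigma$ on Witt vectors — the factor $\varphi_p^{v(\sigma)}$ raises Teichmüller coordinates to the $p^{v(\sigma)}$-th power — together with $\theta(\varphi_q^{-n}(\omega))=u_n$ yields $y_n=u_n^{p^{v(\sigma)}}+\delta_{n+1}'$ with $v_p(\delta_{n+1}')$ bounded below by a quantity tending to a positive constant (one uses $u_n=\pi_L u_{n+1}+u_{n+1}^q$ and $(a+b)^{p^{v(\sigma)}}\equiv a^{p^{v(\sigma)}}+b^{p^{v(\sigma)}}\bmod p$). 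Feeding this into $y_0=[\pi_L^n]^\sigma(y_n)$ and passing to the limit — $[\pi_L]^\sigma$ is contracting on the relevant open disc, so the sequence is Cauchy — I obtain $\theta(\omega_\sigma)=\lim_n[\pi_L^n]^\sigma(u_n^{p^{v(\sigma)}})=x_\sigma$, whence $\theta(t_\sigma)=\log_{LT}^\sigma(x_\sigma)$. (For $\sigma=\operatorname{id}$ one has $\widetilde\sigma=\operatorname{id}$, $\omega_\sigma=\omega$, $y_n=u_n$ exactly, so $x_{\operatorname{id}}=u_0=0$, consistent with the formula giving $v_p=\infty$.)

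For the valuation I would write $x_\sigma=w_1+\sum_{n\geq 1}(w_{n+1}-w_n)$ with $w_n:=[\pi_L^n]^\sigma(u_n^{p^{v(\sigma)}})$. Using $[\pi_L]^\sigma(X)=\sigma(\pi_L)X+X^q$ and $u_1^q=-\pi_L u_1$ gives $w_1=(\sigma(\pi_L)\pm\pi_L^{p^{v(\sigma)}})u_1^{p^{v(\sigma)}}$: for $v(\sigma)=0$ this is $(\sigma(\pi_L)-\pi_L)u_1$, of valuation $\tfrac{1}{e(q-1)}+v_p(\pi_L-\sigma(\pi_L))$, while for $v(\sigma)>0$ it has valuation $\tfrac1e+\tfrac{p^{v(\sigma)}}{e(q-1)}$ because $v_p(\sigma(\pi_L))=\tfrac1e<\tfrac{p^{v(\sigma)}}e=v_p(\pi_L^{p^{v(\sigma)}})$. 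A Taylor expansion of $[\pi_L^n]^\sigma$ around $u_n^{p^{v(\sigma)}}$ then shows $v_p(w_{n+1}-w_n)>v_p(w_1)$ for all $n\geq 1$ (the case $q=2$, $v(\sigma)>0$ is vacuous since $v(\sigma)<f$ forces $p^{v(\sigma)}<q$), so $v_p(x_\sigma)=v_p(w_1)$. Since $v_p(x_\sigma)>\tfrac{1}{q-1}v_p(\sigma(\pi_L))=\tfrac{1}{e(q-1)}$ in both cases, the linear term of $\log_{LT}^\sigma$ dominates, so $v_p(\theta(t_\sigma))=v_p(\log_{LT}^\sigma(x_\sigma))=v_p(x_\sigma)$, which is the claimed formula. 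The hard part will be the uniform control of the increments $w_{n+1}-w_n$ in this last step — showing the tail of the defining series for $x_\sigma$ has strictly larger valuation than $w_1$ — which requires careful bookkeeping with the iterated Lubin--Tate polynomials and their derivatives and attention to small residue characteristic and small $f$; a secondary, more computational point is making precise the error term in Step 2, i.e.\ controlling $\theta$ on $\widetilde\sigma$ applied to the non-Teichmüller element $\varphi_q^{-n}(\omega)$.
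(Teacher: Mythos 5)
Your reduction of the statement to properties of the modified Teichmüller lift is sound (the identities $\varphi_q(\omega)=[\pi_L](\omega)$, $\theta(\varphi_q^{-n}(\omega))=u_n$, the commutation of $\widetilde\sigma=\sigma\otimes\varphi_p^{v(\sigma)}$ with $\varphi_q$, and the computation of $w_1$ via $u_1^q=-\pi_L u_1$ are all correct and reproduce the two cases of the formula), but the step you defer as ``careful bookkeeping'' is not bookkeeping: it is the entire analytic content of the lemma, and the reason you offer for it fails. You invoke that $[\pi_L]^\sigma$ is ``contracting on the relevant open disc'' to pass to the limit and to compare $[\pi_L^n]^\sigma(y_n)$ with $[\pi_L^n]^\sigma(u_n^{p^{v(\sigma)}})$. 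But the points involved have valuation $v_p(u_n^{p^{v(\sigma)}})=\frac{p^{v(\sigma)}}{eq^{n-1}(q-1)}\to 0$, and the gain in valuation when applying $[\pi_L]^\sigma$ to a difference of two nearby points $x,y$ is only $\min\bigl(\tfrac1e,(q-1)\min(v_p(x),v_p(y))\bigr)$. Tracking the iterates, all but $O(1)$ of the $n$ steps take place in the regime where this gain is $(q-1)q^k v_p(u_n^{p^{v(\sigma)}})$, and the total gain over all $n$ steps is bounded by a constant of size roughly $\frac{1}{e(q-1)}+\frac{2}{e}$, \emph{independently of $n$}. Consequently an error term $\delta_n'$ (or an increment $[\pi_L]^\sigma(u_{n+1}^{p^{v(\sigma)}})-u_n^{p^{v(\sigma)}}$, whose valuation tends to $\tfrac1e+p^{v(\sigma)}v_p(u_{n+1})$, a constant) whose valuation is merely bounded below by a positive constant is \emph{not} pushed to valuation $\to\infty$ by $[\pi_L^n]^\sigma$: your argument establishes neither that $(w_n)$ is Cauchy, nor that its limit equals $\theta(\omega_\sigma)$, nor the inequality $v_p(w_{n+1}-w_n)>v_p(w_1)$ (a Taylor expansion with integral divided differences gives the same bounded estimate). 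Closing this requires exhibiting the precise shape of the errors (divisibility by suitable powers of $u_{n+1}$ and $\pi_L$ so that the iterates genuinely gain valuation), i.e.\ a delicate computation that your sketch does not contain.

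For comparison, the paper does not attempt this computation at all: it notes that $t_\sigma=\log_{LT}^\sigma(\varphi_p^{v(\sigma)}u)$, so the first assertion is the definition of $\theta$ applied to the modified Teichmüller lift, and it quotes the convergence and valuation of $\lim_n[\pi_L^n]^\sigma(u_n^{p^{v(\sigma)}})$ from \cite[Lemme 7]{fourquaux2009applications}. So what you are proposing is to reprove Fourquaux's lemma from scratch, while leaving exactly its hard part unproved; either cite that lemma as the paper does, or supply the refined estimates above.
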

	\begin{proof}
		Since $t_{\sigma} = \log_{LT}^{\sigma} (\varphi_p^{v(\sigma)}u)$ the first part is just the definition of $\theta.$  The second part is \cite[Lemme 7]{fourquaux2009applications}.
	\end{proof}
	\begin{thm}
		Let $E/\QQ_p$ be Galois and $N_{E/\QQ_p}(\pi_E) =yp^f.$ Let $\Omega_E$ be a period as in \cite{schneider2001p} of $p$-valuation $\frac{1}{(p-1)}- \frac{1}{e(q-1)}.$  Let $\xi \in W(\overline{\kappa}_E)_E^\times$ be an element with $\varphi_q(\xi) = y^{-1}\xi$ then for 		
		$$\tilde{\Omega}:= \xi\prod_{\id \neq\sigma\in \Gal(E/\QQ_p)}t_\sigma$$
		we have  $0 \neq \theta(\tilde{\Omega}))/\Omega_E \in \CC_p^{G_E}$ and
		$$v_p(\theta(\tilde{\Omega}))= f-1 + \frac{1}{(p-1)}- \frac{1}{e(q-1)}+v_p(\mathcal{D}_{E/\QQ_p}),$$
		where $\mathcal{D}_{E/\QQ_p}$ denotes the different.
		In particular 
		$\theta(\tilde{\Omega}) \in p^{f-1+v_p(\mathcal{D}_{E/\QQ_p})} o_E^\times \Omega_E.$
	\end{thm}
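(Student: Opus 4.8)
The plan is to apply $\theta$ --- which we take to be $\theta_{\infty_{\id}}$, the map at the $G_E$-fixed point $\infty_{\id}$ attached to $t_{LT}$, so that $\theta$ is $G_E$-equivariant and agrees with the map appearing in Lemma \ref{lem:bergermateo} --- multiplicatively to the factors of $\tilde\Omega=\xi\prod_{\id\neq\sigma}t_\sigma$ and to read off the $p$-valuation from Lemma \ref{lem:bergermateo}. First, the restriction of $\theta$ to $W(\overline{\kappa}_E)_E=o_{\breve{E}}$ is the canonical embedding $o_{\breve{E}}\hookrightarrow o_{\CC_p}$ (a nonzero ring map out of a complete DVR, hence injective and valuation-preserving), so $\theta(\xi)\in o_{\breve{E}}^\times$ and $v_p(\theta(\xi))=0$; and for $\sigma\neq\id$ Lemma \ref{lem:bergermateo} gives $\theta(t_\sigma)=\log_{LT}^\sigma(x_\sigma)\neq 0$ with an explicit, finite valuation. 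Hence $\theta(\tilde\Omega)\neq 0$ and the problem reduces to computing $v_p(\theta(\tilde\Omega))=\sum_{\id\neq\sigma}v_p(\theta(t_\sigma))$.

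Next I would split this sum according to $v(\sigma)$. Writing $I=\Gal(E/E_0)$ for the inertia subgroup, one has $v(\sigma)=0$ iff $\sigma\in I$, and every fibre of $v\colon\Gal(E/\QQ_p)\to\{0,\dots,f-1\}$ has exactly $e$ elements. Feeding the two cases of Lemma \ref{lem:bergermateo} in,
\[\sum_{v(\sigma)>0}v_p(\theta(t_\sigma))=\sum_{j=1}^{f-1}e\left(\frac{p^{j}}{e(q-1)}+\frac1e\right)=(f-1)+\frac{1}{q-1}\sum_{j=1}^{f-1}p^{j}=(f-1)+\frac{q-p}{(p-1)(q-1)},\]
\[\sum_{\id\neq\sigma\in I}v_p(\theta(t_\sigma))=\frac{e-1}{e(q-1)}+\sum_{\id\neq\sigma\in I}v_p(\pi_E-\sigma(\pi_E)).\]
The key sub-step is that the last sum is a different: $E/E_0$ is totally ramified and Galois with group $I$, so $o_E=o_{E_0}[\pi_E]$ and the minimal polynomial of $\pi_E$ over $E_0$ is $g(X)=\prod_{\sigma\in I}(X-\sigma(\pi_E))$, whence $\mathcal{D}_{E/E_0}=(g'(\pi_E))=\bigl(\prod_{\id\neq\sigma\in I}(\pi_E-\sigma(\pi_E))\bigr)$ and therefore $\sum_{\id\neq\sigma\in I}v_p(\pi_E-\sigma(\pi_E))=v_p(\mathcal{D}_{E/E_0})=v_p(\mathcal{D}_{E/\QQ_p})$, the last equality because $E_0/\QQ_p$ is unramified. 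Adding the two contributions and simplifying via $e(q-p)+(e-1)(p-1)=e(q-1)-(p-1)$,
\[\frac{q-p}{(p-1)(q-1)}+\frac{e-1}{e(q-1)}=\frac{e(q-1)-(p-1)}{e(p-1)(q-1)}=\frac{1}{p-1}-\frac{1}{e(q-1)}=v_p(\Omega_E),\]
so $v_p(\theta(\tilde\Omega))=f-1+v_p(\mathcal{D}_{E/\QQ_p})+v_p(\Omega_E)$, which is the claimed formula.

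For the last assertion it remains to see that $\theta(\tilde\Omega)/\Omega_E$ is fixed by $G_E$; granting this, Ax--Sen--Tate gives $\theta(\tilde\Omega)/\Omega_E\in\CC_p^{G_E}=E$, and being nonzero with $p$-valuation $f-1+v_p(\mathcal{D}_{E/\QQ_p})$ it generates the ideal $p^{f-1}\mathcal{D}_{E/\QQ_p}$ of $o_E$, i.e. $\theta(\tilde\Omega)\in p^{f-1+v_p(\mathcal{D}_{E/\QQ_p})}o_E^\times\Omega_E$. To check the invariance I would compare Galois cocycles: for $g\in G_E$, using $g(t_\sigma)=\sigma(\chi_{LT}(g))t_\sigma$ one has $\prod_{\id\neq\sigma}\sigma(\chi_{LT}(g))=N_{E/\QQ_p}(\chi_{LT}(g))/\chi_{LT}(g)$, hence $g(\theta(\tilde\Omega))/\theta(\tilde\Omega)=\bigl(g(\xi)/\xi\bigr)\cdot N_{E/\QQ_p}(\chi_{LT}(g))/\chi_{LT}(g)$; the unramified factor $g(\xi)/\xi$ is governed by $\varphi_q(\xi)=y^{-1}\xi$ together with $N_{E/\QQ_p}(\pi_E)=yp^f$, and inserting the standard relation $\chi_{cyc}=(N_{E/\QQ_p}\circ\chi_{LT})\cdot\mu$ with $\mu$ the unramified character sending a Frobenius to $y$ (resp.\ $y^{-1}$), this matches exactly the transformation behaviour of $\Omega_E$ recorded in \cite{schneider2001p} --- the element $\xi$ having been introduced precisely to absorb this unramified discrepancy.

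The routine ingredients are Lemma \ref{lem:bergermateo} and the classical formula for the different of a totally ramified extension, together with the elementary arithmetic above. The step I expect to be the main obstacle is the final one: pinning down the precise $G_E$-action on the correction factor $\xi$ and matching it against the known transformation law of Schneider's period $\Omega_E$, where the exact normalisations (and the very choice of $\xi$) have to be handled with care.
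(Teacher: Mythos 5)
Your proposal is correct and follows essentially the same path as the paper: apply $\theta$ factorwise, use Lemma \ref{lem:bergermateo} split along the cosets of inertia, identify $\prod_{\id\neq\sigma\in I}(\pi_E-\sigma(\pi_E))$ with a generator of $\mathcal{D}_{E/E_0}=\mathcal{D}_{E/\QQ_p}$, and compare the Galois cocycle of $\tilde{\Omega}$ with the transformation law $g(\Omega_E)=\frac{\chi_{cyc}(g)}{\chi_{LT}(g)}\Omega_E$ of Schneider's period. The only step you flag as delicate — pinning down the unramified discrepancy absorbed by $\xi$ — is exactly what the paper settles beforehand via the relation $t_N=xt_{cyc}$ with $x\in\breve{E}$ and $\varphi_E(x)=yx$ (so $\varphi_E(\xi x)=\xi x$, i.e.\ $\xi x\in E^\times$ and $\tilde{\Omega}=\xi t_N/t_{\id}$ transforms by $\chi_{cyc}/\chi_{LT}$), which is the same cancellation you carry out directly on characters.
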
 	
	\begin{proof}
		By Lemma \ref{lem:bergermateo} we get that there exists $x \in E^\times$ such that $t_{cyc} = x \xi t_{N}$ and hence $\xi t_{N}/t_{\id}$ has Galois action given by $\tau= \chi_{cyc}/\chi_{LT}.$ As a consequence $\theta(\tilde{\Omega})/\Omega_E \in \CC_p$ is $G_E$-invariant and hence belongs to $E^\times.$ We now compute the valuation of $\theta(\tilde{\Omega})/\Omega_E.$ Since $\theta(\xi) \in o_{\breve{E}}^\times$ we just need to compute the valuation of $\theta(t_N/t_{\id}).$
		Let us denote $G= \Gal(E/\QQ_p)$ and let $I := \operatorname{Ker}(\Gal(E/\QQ_p) \to \Gal(\kappa_E/\mathbb{F}_p))$ such that $G/I \cong \operatorname{Gal}(E_0/\QQ_p).$
		Let us fix a $\tau \in G$ whose restriction to $\operatorname{Gal}(E_0/\QQ_p)$ is the arithmetic Frobenius. We have $$\operatorname{Gal}(E/\QQ_p) = \coprod_{\rho \in G/I = \operatorname{Gal}(E_0/\QQ_p)} I\sigma = \coprod_{i=0}^{h-1} I\tau^i.$$
		For every $\sigma \in I\tau^i$ we have $v(\sigma) = v(\tau^i) = i.$ Hence we get 
		\begin{align}
			v_p(\prod_{\sigma \neq \id} \log_{LT}^{\sigma}(x_\sigma)) & = \sum_{\rho \in I}\sum_{i=1}^{f-1} 	v_p( \log_{LT}^{\rho\tau^i}(x_{\rho \tau^i}))+ \sum_{\id \neq \sigma \in I} v_p( \log_{LT}^{\sigma}(x_\sigma)) \\
			&= \# I \sum_{i=1}^{f-1} 	(\frac{p^{i}}{e(q-1)}+\frac{1}{e} ) + \sum_{\id \neq \sigma \in I} 	\frac{1}{e(q-1)}+v_p(\pi_E-\sigma(\pi_E)) \\
			& = f-1 + \frac{p^f-1}{(p-1)(q-1)} - \frac{1}{q-1} + \frac{e-1}{e(q-1)} + v_p(\mathcal{D}_{E/E_0}) \\
			& = f-1 + \frac{1}{(p-1)}- \frac{1}{e(q-1)}+v_p(\mathcal{D}_{E/E_0}).
		\end{align}
		In the third equality we used $\# I =e$ and $p^f=q$ to simplify the expressions and we used that $E/E_0$ is totally ramified with uniformiser $\pi_E,$ which implies that $\mathcal{D}_{E/E_0}$ is generated by $\operatorname{Mipo}_{E_0}'(\pi_E) = \prod_{\id \neq \sigma \in I} (\pi_E-\sigma(\pi_E)).$ 
		Since $E_0/\QQ_p$ is unramified and $\mathcal{D}_{E/E_0}\mathcal{D}_{E_0/\QQ_p} = \mathcal{D}_{E/\QQ_p}$ we get the claim. 
	\end{proof}
	
	\begin{prop} \label{prop:elementetinftysigma}
		Let $\eta_\sigma \in o_{\breve{E}}^\times$ be  such that $\varphi_E(\eta_\sigma) = \frac{\pi_E}{\sigma(\pi_E)}\eta_\sigma.$ Let $t_{\infty_\sigma}:=\eta_\sigma t_\sigma.$ Then
		\begin{enumerate}
			\item $\varphi_E(t_{\infty_\sigma}) = \pi_E t_{\infty_\sigma}.$
			\item For $g\in G_E$ we have $gt_{\infty_\sigma} = \chi_{LT}^{\sigma}(g)\xi(g) t_{\infty_\sigma},$ with an unramified character $\xi(g).$
			\item The set $\{t_{\infty_\sigma} \mid \sigma \in \Sigma_E \} \subset P^1$ is a system of representatives corresponding to $p^{-1}(\infty)$ under the bijection from Theorem \ref{thm:Xprop} 2.).
		\end{enumerate}
	\end{prop}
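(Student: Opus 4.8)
The plan is to prove the three assertions by direct computation, the only non-formal ingredients being the shape of the $G_E$-action on $o_{\breve E}$ and, for the last point, the divisor-theoretic behaviour of $p_E\colon X_E\to X_{\QQ_p}$.

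First I would recall that an $\eta_\sigma$ as in the statement exists: with $c:=\pi_E/\sigma(\pi_E)\in o_E^\times$, the equation $\varphi_E(y)=cy$ is solved in $o_{\breve E}^\times=W(\overline{\kappa}_E)_E^\times$ by successive approximation (reduce mod $\pi_E$ to $\bar y^{q-1}=\bar c$, solvable since $\overline{\kappa}_E$ is algebraically closed, then lift using surjectivity of $z\mapsto z^q-z$ on $\overline{\kappa}_E$), the solution being unique up to $o_E^\times$. Assertion (1) is then immediate from the lemma recalled above: $\varphi_E(t_{\infty_\sigma})=\varphi_E(\eta_\sigma)\varphi_E(t_\sigma)=c\eta_\sigma\cdot\sigma(\pi_E)t_\sigma=\pi_E\,t_{\infty_\sigma}$; as $\eta_\sigma,t_\sigma\neq 0$ we get $t_{\infty_\sigma}\in P_1\setminus\{0\}$, so by Theorem \ref{thm:Xprop} it defines a closed point $\infty_{t_{\infty_\sigma}}\in\abs{X_E}$ with $\Div(t_{\infty_\sigma})=[\infty_{t_{\infty_\sigma}}]$ (a degree-one section has a single reduced zero).

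For (2), observe that $\eta_\sigma\in o_{\breve E}=W(\overline{\kappa}_E)_E$, on which $G_E$ acts through the unramified quotient $G_E\twoheadrightarrow\Gal(\overline{\kappa}_E/\kappa_E)$, a Frobenius lift acting as $\varphi_E^{\pm1}$. Iterating $\varphi_E(\eta_\sigma)=c\eta_\sigma$ (and using $\varphi_E(c)=c$) gives $\varphi_E^n(\eta_\sigma)=c^n\eta_\sigma$, and since $o_E^\times$ is profinite this extends continuously to exponents in $\widehat{\ZZ}$; hence $g(\eta_\sigma)=\xi(g)\eta_\sigma$ for the unramified character $\xi\colon G_E\to o_E^\times$, $g\mapsto c^{\pm\operatorname{unr}(g)}$. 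Combining with $g(t_\sigma)=\sigma(\chi_{LT}(g))t_\sigma$ from the same lemma yields $g(t_{\infty_\sigma})=\sigma(\chi_{LT}(g))\xi(g)\,t_{\infty_\sigma}=\chi_{LT}^\sigma(g)\xi(g)\,t_{\infty_\sigma}$, as claimed.

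For (3) I would argue in three steps. \emph{Distinctness}: if $\infty_{t_{\infty_\sigma}}=\infty_{t_{\infty_\tau}}$ then $t_{\infty_\sigma}=\lambda t_{\infty_\tau}$ with $\lambda\in E^\times$ by Theorem \ref{thm:Xprop}(2); feeding this into (2) for all $g\in G_E$ forces $\chi_{LT}^\sigma/\chi_{LT}^\tau=\xi_\tau/\xi_\sigma$ to be unramified, so $\chi_{LT}^\sigma$ and $\chi_{LT}^\tau$ agree on $I_E$; but by local class field theory $\chi_{LT}^\sigma\vert_{I_E}$ is $\sigma$ composed with the reciprocity isomorphism $I_E^{\mathrm{ab}}\cong o_E^\times$, and an automorphism of $E$ fixing $o_E^\times$ pointwise fixes the ring $\ZZ_p[o_E^\times]=o_E$ and is thus trivial; hence $\sigma=\tau$. \emph{Lying over $\infty$}: by the lemma $t_N=xt_{cyc}$ with $x\in\breve E^\times$, so $\prod_{\sigma\in\Sigma_E}t_{\infty_\sigma}=\big(\prod_\sigma\eta_\sigma\big)t_N=c_0\,t_{cyc}$ with $c_0\in\breve E^\times$; since $p_E$ is $\operatorname{Proj}$ of $P_{\QQ_p}\hookrightarrow E\otimes_{\QQ_p}P_{\QQ_p}\cong P_E^{([E:\QQ_p])}$ we have $p_E^*\mathcal{O}_{X_{\QQ_p}}(1)=\mathcal{O}_{X_E}([E:\QQ_p])$, and the pulled-back section $p_E^\#t_{cyc}\in P_{E,[E:\QQ_p]}$ is a $\breve E^\times$-multiple of $t_{cyc}$, hence — lying in a fixed $\varphi_E$-eigenspace, where the twisting scalar is forced into $(\breve E)^{\varphi_E=1}=E$ — an $E^\times$-multiple of $\prod_\sigma t_{\infty_\sigma}$; therefore $\Div(p_E^\#t_{cyc})=p_E^*[\infty]=\sum_\sigma[\infty_{t_{\infty_\sigma}}]$, and in particular every $\infty_{t_{\infty_\sigma}}$ lies over $\infty$. \emph{Counting}: the fibre $p_E^{-1}(\infty)$ has exactly $[E:\QQ_p]=\abs{\Sigma_E}$ points (its residue algebra is $\CC_p\otimes_{\QQ_p}E$), so by distinctness $\sigma\mapsto[t_{\infty_\sigma}]$ is a bijection onto $p_E^{-1}(\infty)$; comparing with the parametrisation of \cite[Theorem A.1]{pham2023prismaticfcrystalsanalyticcrystalline} through the $G_E$-action (which by (2) is $\chi_{LT}^\sigma$ up to an unramified twist) identifies $[t_{\infty_\sigma}]$ with $(\infty,\sigma)$.

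The main obstacle is the middle step of (3): one must track that the base-change isomorphism $E\otimes_{\QQ_p}P_{\QQ_p}\cong P_E^{([E:\QQ_p])}$ of Fargues--Fontaine and the inclusion $B_{\QQ_p}^+\hookrightarrow B_E^+$ agree only up to a $\varphi_E$-twisting unit of $\breve E$ (this is already visible in the lemma $t_N=xt_{cyc}$), the content of the argument being that after restricting to the correct graded piece this ambiguity is pinned down to $E^\times$ and hence leaves the associated divisor unchanged. The remaining points — existence of $\eta_\sigma$, assertions (1) and (2), distinctness, and counting — are formal once the lemmas recalled above are in hand.
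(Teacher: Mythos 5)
Your proposal is correct and follows the same overall skeleton as the paper (existence of $\eta_\sigma$ by successive approximation over the algebraically closed residue field, points (1) and (2) by direct computation, point (3) by pairwise distinctness plus the count $\abs{p^{-1}(\infty)}=[E:\QQ_p]$), but it differs in two places worth noting. For distinctness you restrict the characters $\chi_{LT}^\sigma\xi_\sigma$ to inertia and invoke local class field theory ($\chi_{LT}^\sigma\vert_{I_E}$ determines $\sigma$ on $o_E^\times$, hence on $E$), whereas the paper reads off the Hodge--Tate weights: each $t_{\infty_\sigma}$ is a period of a character with weight $1$ exactly at the embedding $\sigma^{-1}$ and weight $0$ elsewhere, so the classes $E^\times t_{\infty_\sigma}$ are distinct. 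Both arguments are sound; the paper's is shorter within its framework (and uses that $E/\QQ_p$ is Galois to speak of $\sigma^{-1}$), while yours avoids Hodge--Tate considerations altogether. Secondly, you make explicit the step that each $\infty_{t_{\infty_\sigma}}$ actually lies in $p^{-1}(\infty)$, via $\prod_\sigma t_{\infty_\sigma}\in\breve{E}^\times t_{cyc}$ and the compatibility of the base-change isomorphism $E\otimes_{\QQ_p}P_{\QQ_p}\cong P_E^{([E:\QQ_p])}$ with the inclusion $B^+_{\QQ_p}\subset B^+_E$ up to a $\varphi_E$-twisting unit of $\breve{E}$; the paper leaves this implicit (it is supplied by the preceding Lemma $t_N=xt_{cyc}$ and the discussion of the fibre via \cite[Theorem A.1]{pham2023prismaticfcrystalsanalyticcrystalline}), and your observation that the ambiguity is pinned down to $(\breve{E})^{\varphi_E=1}=E^\times$ inside a fixed graded piece is exactly the right way to close it. So your route is slightly more laborious but also more self-contained on the lying-over step.
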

	\begin{proof}
		The existence of $\eta_\sigma$ follows from a standard argument by dévissage using that $o_{\breve{E}}/\pi_E$ is algebraically closed. The first two points follow by definition. To see the third point, the target $p^{-1}(\infty)$ has exactly $[E:\QQ_p]$ elements hence it suffices to see that the $t_{\infty_\sigma}$ are not $E^\times$-multiples of each other. By \textit{(2)} each $t_{\infty_\sigma}$ is a period for a character with Hodge--Tate weight $1$ at the embedding $\sigma^{-1}$ and Hodge--Tate weight $0$ at the embeddings $\neq \sigma^{-1}.$ We conclude that the classes  $E^\times t_{\infty_{\sigma}} \in (P^1\setminus \{0\})/E^\times$ are distinct. 
	\end{proof}
	\begin{defn}
		Let $\mathcal{E}$ be a $G_K$-equivariant vector bundle on the Fargues--Fontaine curve $X_E$. Let $S = \{\infty_\sigma \mid \sigma \in \Sigma_E\} \subset X_E$ and $U:=X_E\setminus S.$ We define
		$W(\mathcal{E}):= (\mathcal{E}(U), \prod_{s \in S} (\widehat{\mathcal{E}_{s}})).$
	\end{defn}
	\begin{prop} \label{prop:bundletoBpair}
		Let $\mathcal{E}$ be a $G_K$-equivariant vector bundle on $X_E$. Let $S = \{\infty_\sigma \mid \sigma \in \Sigma_E\} \subset X_E$ and $U:=X_E\setminus S.$ Then
		\begin{enumerate}[(i)]
			\item $\mathcal{O}_{X_E}(U) = B^+_E[1/t_{cyc}]^{\varphi_E=1} \cong E \otimes_{\QQ_p}\Bcris^{\varphi_p=1}.$
			\item For every $s \in S$ there is a canonical $G_E$-equivariant isomorphism $\widehat{\mathcal{O}_{X_E,s}} \cong \widehat{\mathcal{O}_{X_{\QQ_p},\infty}} (\cong \Bdr^+).$
			\item Using the above isomorphisms $W(\mathcal{E})$ and the induced action of $G_K$ on $\mathcal{E}(U)$ and $\mathcal{E}_s$ turns $W(\mathcal{E})$ into a $G_K$-$E$-$B$-pair. 
		\end{enumerate}
	\end{prop}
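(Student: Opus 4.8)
The plan is to reduce everything to the case $E=\QQ_p$ handled in Proposition \ref{prop:FFEquivalence}, using the base--change isomorphism \eqref{eq:basechangeFF} and the explicit description of the points and periods from Section \ref{subsec:elementst}. For (i), Proposition \ref{prop:BVLE}(1) applied to $S=\{\infty_\sigma\mid\sigma\in\Sigma_E\}$ gives $\mathcal{O}_{X_E}(U)=(B^+_E[1/\prod_\sigma t_{\infty_\sigma}])^{\varphi_E=1}$, so the first step is to check that $\prod_\sigma t_{\infty_\sigma}$ and $t_{cyc}$ differ by a unit of $B^+_E$. Writing $t_{\infty_\sigma}=\eta_\sigma t_\sigma$ with $\eta_\sigma\in o_{\breve E}^\times$ (Proposition \ref{prop:elementetinftysigma}) and using the Lemma of Section \ref{subsec:elementst} that $\prod_\sigma t_\sigma=x\,t_{cyc}$ with $x\in\breve E$, one gets $\prod_\sigma t_{\infty_\sigma}=(\prod_\sigma\eta_\sigma)x\,t_{cyc}$; since $\pi_E$ is invertible in $B^+_E$ the subfield $\breve E$ lands in $(B^+_E)^\times$, so $(\prod_\sigma\eta_\sigma)x\in(B^+_E)^\times$ and $B^+_E[1/\prod_\sigma t_{\infty_\sigma}]=B^+_E[1/t_{cyc}]$. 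For the identification with $E\otimes_{\QQ_p}\Bcris^{\varphi_p=1}$ I would invoke \eqref{eq:basechangeFF}: $p_E\colon X_E=X_{\QQ_p}\times_{\QQ_p}E\to X_{\QQ_p}$ is finite étale, $\infty$ is $G_{\QQ_p}$--stable (the point attached to $t_{cyc}$), and by Proposition \ref{prop:elementetinftysigma}(3) its fibre is $S$; hence $U=p_E^{-1}(X_{\QQ_p}\setminus\{\infty\})=\operatorname{Spec}(\fB_e)\times_{\QQ_p}E$, i.e.\ $\mathcal{O}_{X_E}(U)=\fB_e\otimes_{\QQ_p}E=\Bcris^{\varphi_p=1}\otimes_{\QQ_p}E$.

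For (ii), since $p_E$ is étale and both residue fields are the algebraically closed field $\CC_p$, the local homomorphism $\mathcal{O}_{X_{\QQ_p},\infty}\to\mathcal{O}_{X_E,s}$ is unramified with trivial residue extension, hence induces an isomorphism on the associated graded rings of the maximal ideals and therefore an isomorphism $\widehat{\mathcal{O}_{X_{\QQ_p},\infty}}\xrightarrow{\sim}\widehat{\mathcal{O}_{X_E,s}}$; the source is $\Bdr^+$ by Proposition \ref{prop:FFEquivalence}(1). For $G_E$--equivariance, $G_E$ acts on $X_E$ through its action on $X_{\QQ_p}$ (trivially on the factor $E$), so $p_E$ is $G_E$--equivariant, and by Proposition \ref{prop:elementetinftysigma}(2) every $g\in G_E$ preserves the line $E\,t_{\infty_\sigma}$, i.e.\ $G_E$ fixes each point of $S$; thus the isomorphism above respects the $G_E$--actions.

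For (iii) it remains to assemble $W(\mathcal{E})=(\mathcal{E}(U),\prod_{s\in S}\widehat{\mathcal{E}_s})$ into an $E$--$B$--pair. By (i), $\mathcal{E}(U)$ is a module over $\mathcal{O}_{X_E}(U)\cong E\otimes_{\QQ_p}\fB_e$, finite free because this ring is a PID (Proposition \ref{prop:BVLE}(2)) and $\mathcal{E}|_U$ is a vector bundle; the continuity of the induced $G_K$--action ($G_K\subseteq G_E$) I would obtain by identifying $\mathcal{E}(U)=(p_{E*}\mathcal{E})(X_{\QQ_p}\setminus\{\infty\})$ and $\prod_s\widehat{\mathcal{E}_s}=\widehat{(p_{E*}\mathcal{E})_\infty}$ with the pieces of the $\QQ_p$--$B$--pair attached to the $G_K$--equivariant bundle $p_{E*}\mathcal{E}$ on $X_{\QQ_p}$ via Proposition \ref{prop:FFEquivalence}. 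Each $\widehat{\mathcal{E}_s}$ is finite free over $\widehat{\mathcal{O}_{X_E,s}}\cong\Bdr^+$ and $G_K$--stable (as $G_K$ fixes $s$), so by (ii) the product $\prod_s\widehat{\mathcal{E}_s}$ is a finite free $E\otimes_{\QQ_p}\Bdr^+\cong\prod_s\Bdr^+$--module with continuous $G_K$--action. Finally the Beauville--Laszlo glueing isomorphisms of Proposition \ref{prop:beauvillelazlo}/\ref{prop:BVLE}, together with $\widehat{K_{X,s}}\cong\Bdr$ (Theorem \ref{thm:Xprop}(5)), identify $\prod_s\widehat{\mathcal{E}_s}\otimes_{\prod_s\Bdr^+}\prod_s\Bdr$ with $(E\otimes_{\QQ_p}\Bdr)\otimes_{E\otimes_{\QQ_p}\fB_e}\mathcal{E}(U)=W(\mathcal{E})_{\mathrm{dR}}$, exhibiting $\prod_s\widehat{\mathcal{E}_s}$ as a $G_K$--stable $\Bdr^+$--lattice, so $W(\mathcal{E})$ is a $G_K$--$E$--$B$--pair.

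I expect the one genuinely delicate point to be pinning down the topologies on $\mathcal{E}(U)$ and on the $\widehat{\mathcal{E}_s}$ and verifying the continuity of the $G_K$--action: the reduction to the $\QQ_p$--case through the pushforward $p_{E*}$ is what makes this routine, since one can import the topological input of Proposition \ref{prop:FFEquivalence} verbatim. Everything else is bookkeeping with the finite étale base change $p_E$ and the comparison $\prod_\sigma t_{\infty_\sigma}\sim t_{cyc}$ up to a unit of $B^+_E$.
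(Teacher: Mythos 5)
Your proof is correct, and it reaches the same endpoints as the paper by a partly different route. The first half of (i) (that $\prod_\sigma t_{\infty_\sigma}$ is a $\breve{E}^\times\subset(B_E^+)^\times$-multiple of $t_{cyc}$, via Proposition \ref{prop:elementetinftysigma} and the lemma $t_N=xt_{cyc}$) is exactly the paper's argument. For the identification $\mathcal{O}_{X_E}(U)\cong E\otimes_{\QQ_p}\fB_e$ the paper instead uses $\Bcris^{\varphi_p=1}=B^+_{\QQ_p}[1/t_{cyc}]^{\varphi_p=1}$ together with Galois descent along $E_0/\QQ_p$ and then $E\otimes_{E_0}-$, whereas you use that $p_E$ is the base change of the finite \'etale map $\operatorname{Spec}E\to\operatorname{Spec}\QQ_p$, that $p_E^{-1}(\infty)=S$ (Proposition \ref{prop:elementetinftysigma}(3)), and hence $\mathcal{O}_{X_E}(U)=\fB_e\otimes_{\QQ_p}E$ directly; both are fine, and your version is arguably cleaner, at the cost of using $X_E\cong E\times_{\QQ_p}X_{\QQ_p}$ where the paper stays ring-theoretic. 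For (ii) the paper follows Pham's argument: it applies the completion-of-a-finite-ring-map lemma (Stacks 07N9) to $\mathcal{O}_{X_{\QQ_p}}(V)\to\mathcal{O}_{X_E}(p^{-1}(V))$ to get $E\otimes_{\QQ_p}\widehat{\mathcal{O}}_{X_{\QQ_p},\infty}\cong\prod_{s\in S}\widehat{\mathcal{O}}_{X_E,s}$ and compares with $E\otimes_{\QQ_p}\Bdr^+\cong\prod_\sigma\Bdr^+$; you instead argue pointwise that the \'etale local map with trivial residue extension ($\kappa(s)$ is a finite extension of the algebraically closed field $\CC_p$, hence equal to it) completes to an isomorphism. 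Both work; the paper's product decomposition is obtained by you a posteriori from $E\otimes_{\QQ_p}\Bdr^+\cong\prod_\sigma\Bdr^+$, and your $G_E$-equivariance argument (each $s\in S$ is $G_E$-fixed because $g(t_{\infty_\sigma})\in E^\times t_{\infty_\sigma}$) is implicit in the paper as well. For (iii) the paper settles continuity by citing Fargues--Fontaine, Proposition 9.1.3, directly; you reduce to the case $E=\QQ_p$ via the pushforward $p_{E*}\mathcal{E}$ and Proposition \ref{prop:FFEquivalence}. This works, but note it silently uses the identification $\widehat{(p_{E*}\mathcal{E})_\infty}\cong\prod_{s\in S}\widehat{\mathcal{E}_s}$ as topological $G_K$-modules (the module analogue of the completion lemma used in (ii)); with that spelled out, your reduction imports the topological input exactly where the paper invokes it in loc.\ cit., so the two proofs are of comparable depth.
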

	\begin{proof}
		For \textit{(i)} the first equation we note that by Proposition \ref{prop:BVLE} we have a priori $\mathcal{O}_{X_E}(U) = B_E^+[1/(\prod_\sigma{t_{\infty_\sigma}})]^{\varphi_E=1}.$ By Lemma \ref{lem:bergermateo} and Proposition \ref{prop:elementetinftysigma} the product in the denominator is a $\breve{E}^\times \subset (B_E^+)^\times$-multiple of $t_{cyc}.$ The isomorphism 
		$B_E^+[1/t_{cyc}]^{\varphi_E=1} \cong E \otimes \Bcris^{\varphi_p=1}$ is obtained by first using $\Bcris^{\varphi_p=1} = B_{\QQ_p}[1/t_{cyc}]^{\varphi_p=1}$  (cf. \cite[Théorème 6.5.2]{farguesfontainecourbe}) together with Galois descent for $E_0/\QQ_p,$ the maximal unramified subextension of $E/\QQ_p$, to obtain 
		$$(E_0 \otimes_{\QQ_p}B^+_{\QQ_p}[1/t_{cyc}])^{\varphi_{E_0}=1} \cong E_0 \otimes (B^+_{\QQ_p}[1/t_{cyc}])^{\varphi_{p}=1}$$ and lastly applying $E\otimes_{E_0}-$ to both sides (using that $\varphi_E= \id \otimes \varphi_{E_0}).$ For the second point we follow the argument in \cite[Proof of A.1]{pham2023prismaticfcrystalsanalyticcrystalline}. We have for every affine neighbourhood $\infty \in V \subset X_{\QQ_p}$ that $\mathcal{O}_{X_E}(p^{-1}(V)) \cong E \otimes_{\QQ_p} \mathcal{O}_{X_{\QQ_p}}(V),$ note also that the sections over $V$ are noetherian. This allows us to apply \cite[\href{https://stacks.math.columbia.edu/tag/07N9}{Lemma 07N9}]{stacks-project} to the
		finite ring map $ \mathcal{O}_{X_{\QQ_p}}(V) \to \mathcal{O}_{X_E}(p^{-1}(V))$ and the prime ideal corresponding to $\infty \in X_{\QQ_p}$ to obtain
		\begin{equation*}
			E \otimes_{\QQ_p} \widehat{\mathcal{O}_{X_{\QQ_p},\infty}} \cong \prod_{s \in S} \widehat{ \mathcal{O}_{X_E,s}}
		\end{equation*}
		while at the same time 
		$E \otimes_{\QQ_p} \widehat{\mathcal{O}_{X_{\QQ_p},\infty}} \cong E\otimes_{\QQ_p} \Bdr^+ \cong \prod_{\sigma \in \Sigma_E} \Bdr^+.$ To see that $W(\mathcal{E})$ defines an $E$-$B$-pair the only remaining point is the continuity of the $G_K$-action, this follows from \cite[Proposition 9.1.3]{farguesfontainecourbe}.
	\end{proof}
	\begin{thm} \label{thm:bundequiv} Let $E$ be Galois over $\QQ_p.$ The functor $\mathcal{E} \mapsto W(\mathcal{E})$
		Defines an equivalence between the category of $G_K$-equivariant vector bundles on $X_E$ category of $G_K$-$E$-$B$-pairs.
		The above equivalence restricts to an equivalence between $\operatorname{Rep}_E(G_K)$ and the category of $G_K$-equivariant vector bundles on $X_E$ of slope $0.$
	\end{thm}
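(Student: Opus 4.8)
The plan is to build the quasi-inverse by hand and check it is inverse on both sides, using the Beauville–Laszlo type descent of Proposition~\ref{prop:BVLE} as the backbone. Given a $G_K$-$E$-$B$-pair $W = (W_e, W_{\mathrm{dR}}^+)$, I first reinterpret the data through the canonical identifications of Proposition~\ref{prop:bundletoBpair}: the ring $\mathcal{O}_{X_E}(U) \cong E\otimes_{\QQ_p}\Bcris^{\varphi_p=1} = \fB_{e}\otimes_{\QQ_p}E$ is exactly the ring over which $W_e$ is a finite free module, and the product $\prod_{s\in S}\widehat{\mathcal{O}_{X_E,s}} \cong E\otimes_{\QQ_p}\Bdr^+ \cong \prod_{\sigma\in\Sigma_E}\Bdr^+$ is exactly the ring over which $W_{\mathrm{dR}}^+ = \prod_\sigma W_{\mathrm{dR},\sigma}^+$ lives. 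So a $B$-pair supplies precisely a finite free $\mathcal{O}_{X_E}(U)$-module $N := W_e$, finite free $\widehat{\mathcal{O}_{X_E,s}}$-modules $M_s := W_{\mathrm{dR},\sigma(s)}^+$ for each $s\in S$, and glueing isomorphisms $u_s$ over $\widehat{K_{X_E,s}}$ coming from the base change of the structural identification $W_{\mathrm{dR}} = \Bdr\otimes_{\fB_e} W_e$ (here one uses that $\widehat{K_{X_E,s}}$ is the completed fraction field of $\widehat{\mathcal{O}_{X_E,s}}$, which is $\Bdr$). By Proposition~\ref{prop:BVLE}(2) this triple corresponds to a unique vector bundle $\mathcal{E}$ on $X_E$; the $G_K$-action on $N$, the $M_s$ (compatible since $G_K$ fixes $S$ pointwise, as $E\subseteq K$), and their compatibility under $u_s$ glue to a $G_K$-equivariant structure on $\mathcal{E}$ by functoriality of the equivalence in Proposition~\ref{prop:BVLE}.

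Next I verify the two composites are naturally isomorphic to the identity. In one direction, starting from $\mathcal{E}$ and forming $W(\mathcal{E})$, Proposition~\ref{prop:bundletoBpair} already packages $W(\mathcal{E})$ as a $B$-pair, and running the above reconstruction gives back the triple $(\mathcal{E}(U), (\widehat{\mathcal{E}_s})_s, (c_s)_s)$, which Proposition~\ref{prop:BVLE}(2) identifies with $\mathcal{E}$; naturality and $G_K$-equivariance are immediate from functoriality. In the other direction, starting from $W$, the reconstruction produces $\mathcal{E}$ with $\mathcal{E}(U) = W_e$ and $\widehat{\mathcal{E}_s} = W_{\mathrm{dR},\sigma(s)}^+$, and then $W(\mathcal{E})$ recovers $W$ on the nose. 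Fully faithfulness then follows formally since both directions are compatible with morphisms (a morphism of $B$-pairs is a pair of $G_K$-equivariant maps compatible over $\Bdr$, which is exactly a morphism of the glueing triples), or one simply invokes that an equivalence of categories composed with a natural isomorphism is again an equivalence. I should also check that the continuity of the $G_K$-action is preserved in both directions: from bundle to $B$-pair this is the last line of Proposition~\ref{prop:bundletoBpair}, and conversely continuity of the action on $W_e$ and $W_{\mathrm{dR}}^+$ transfers to $\mathcal{E}(U)$ and the $\widehat{\mathcal{E}_s}$ by the same references.

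For the second assertion, I use that slope-$0$ bundles on $X_E$ are exactly those isomorphic to $\mathcal{O}_{X_E}^n$, equivalently those $\mathcal{E}$ with $\mathcal{E} \cong V\otimes_E \mathcal{O}_{X_E}$ for $V = H^0(X_E, \mathcal{E})$ a finite-dimensional $E$-vector space with continuous $G_K$-action (this is the $X_E$-analogue of the classification of semistable slope-$0$ bundles, which holds since $\mathcal{O}_{X_E}(X_E) = E$ by Theorem~\ref{thm:Xprop}, and $\deg=0$ rank-$1$ bundles are trivial on a complete curve with this field of definition). Under the functor, $V\otimes_E \mathcal{O}_{X_E}$ maps to $(V\otimes_E \fB_e\otimes_{\QQ_p}E,\ V\otimes_E (E\otimes_{\QQ_p}\Bdr^+))$, which is exactly the $B$-pair $W(V)$ attached to $V\in\operatorname{Rep}_E(G_K)$; conversely a $B$-pair of this form has underlying bundle of slope $0$. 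The compatibility $H^0(X_E, V\otimes\mathcal{O}_{X_E}) = V$ and the fundamental exact sequence (in its $X_E$-form, $H^0(X_E,\mathcal{O}_{X_E}) = E$, $H^1(X_E,\mathcal{O}_{X_E})=0$, which follows from the $\QQ_p$-case by the finite flat base change $p_E\colon X_E\to X_{\QQ_p}$) guarantee this restriction is an equivalence onto its image.

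The main obstacle I expect is the bookkeeping at the points above $\infty$: one must be careful that the reconstruction uses the \emph{same} identification $E\otimes_{\QQ_p}\Bdr^+ \cong \prod_{\sigma\in\Sigma_E}\Bdr^+$ as in Definition~\ref{def:bpair} and in Proposition~\ref{prop:bundletoBpair}(ii), so that the correspondence $s\leftrightarrow\sigma(s)$ between points of $S$ and embeddings $\sigma\in\Sigma_E$ is pinned down compatibly on both sides (including matching $\widehat{K_{X_E,s}}$ with the $\sigma$-component of $E\otimes_{\QQ_p}\Bdr$); and that the $G_K$-action genuinely permutes the $M_s$ trivially, which relies on $E\subseteq K$ so that $G_K$ fixes every point of $S = p_E^{-1}(\infty)$. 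Once this identification is fixed once and for all, everything else is a formal consequence of Propositions~\ref{prop:BVLE} and~\ref{prop:bundletoBpair}.
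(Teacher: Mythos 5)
Your proposal is correct and follows essentially the same route as the paper: the quasi-inverse is built from the glueing equivalence of Proposition~\ref{prop:BVLE} together with the identifications of Proposition~\ref{prop:bundletoBpair} (including the continuity check), and the slope-$0$ statement is handled via the classification of bundles on $X_E$ and the computation $H^0(X_E,\mathcal{E})\cong V$. You merely spell out the verification that the two composites are the identity and the bookkeeping of the identification $E\otimes_{\QQ_p}\Bdr^+\cong\prod_{\sigma\in\Sigma_E}\Bdr^+$, which the paper leaves as ``one can check''.
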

	\begin{proof}
		Using Proposition \ref{prop:bundletoBpair} and Proposition \ref{prop:BVLE} we see that the data of an $E$-$B$-pair $W$ defines a vector bundle $\mathcal{E}(W)$ and the $G_K$-action translates to the bundle being $G_K$-equivariant.
		We will call $W(\mathcal{E})$ a $B$-pair although to be precise 
		one has to plug in the isomorphisms 
		$B_E^+[1/t_{cyc}]^{\varphi_E=1} \cong E \otimes_{\QQ_p}\fB_e$ and $\prod_{\sigma \colon E \to \Bdr^+}\Bdr^+\cong E\otimes_{\QQ_p}\Bdr^+$ to obtain a $B$-pair in the sense of Definition \ref{def:bpair}.
		
		One can check that they are inverse to each other. 
		Note that as a consequence of the classification of vector bundles a vector bundle $\mathcal{E}$ is of slope $0$ if and only if it is a direct sum of $\mathcal{O}_{X_E}.$ Furthermore  $H^0(X_E,\mathcal{O}_{X_E}(\lambda))$ is infinite dimensional if $\lambda >0,$ zero if $\lambda <0$ and $1$ if $\lambda=0$ (cf. \cite[Proposition 8.2.3]{farguesfontainecourbe}). Hence $\mathcal{E}$ is of slope $0$ if and only if $H^0(X_E,\mathcal{E})$ is  a $\operatorname{rank}(\mathcal{E})$-dimensional $E$-vector space. Let $V \in \operatorname{Rep}_E(G_K)$ and denote by $W(V):=(W_e,W_{\mathrm{dR}}^+)$ its $G_K$-$E$-$B$-pair. Then $V = W_e \cap W_{\mathrm{dR}}^+,$ where the intersection is formed in $W_{\mathrm{dR}}.$ Unwinding the constructions from \ref{prop:bundletoBpair} we conclude that $V=H^0(X_E,\mathcal{E}(W(V)))$ and hence that $\mathcal{E}(V)$ is semi-stable of slope $0.$ If conversely $\mathcal{E}$ is semi-stable of slope $0,$ then $\mathcal{O}_{X_E}\otimes_E H^0(X_E,\mathcal{E}) \to \mathcal{E}$ is an isomorphism. But then $W(\mathcal{E}) = (B_E^+[1/t_{cyc}]^{\varphi_E=1}\otimes_EV, \prod_{\sigma} \Bdr^+ \otimes_{E,\sigma} V)$ is the $B$-pair attached to $V=H^0(X_E,E).$ 
		
	\end{proof}
	From the construction it is easy to describe the de Rham objects. 
	\begin{rem}Let $W$ be the $B$-pair corresponding to $\mathcal{E}.$ Then the following are equivalent:
		\begin{enumerate}[(i)]
			\item $\widehat{\mathcal{E}_s}[1/t_{s}]$ admits a $G_K$-equivariant basis for every $s \in S.$ 
			\item $W$ is de Rham, i.e, $W_{\mathrm{dR}}$ admits a $G_K$-equivariant basis. 
			\item $\dim_KH^0(G_K,\widehat{\mathcal{E}_s}[1/t_{s}])=\operatorname{rank}(\mathcal{E})$ for all $s \in S.$ 
			\end{enumerate}
	\end{rem}

	\section{Crystalline vector bundles}
	In \cite{pham2023prismaticfcrystalsanalyticcrystalline} it is explained how to obtain a ``crystalline'' vector bundle on $X_E$ from a filtered $\varphi_q$-module. However, the notion of crystalline in loc.cit. agrees with the notion $E$-crystalline (crystalline and $E$-analytic). This is due to working over
	$\fB_E[1/t_{E}]^{\varphi_q=1},$ which is $\mathcal{O}_{X_E}(X\setminus{\{\infty_\sigma\} })$ for one fixed choice $\infty_\sigma$  above $\infty \in X_{\QQ_p}.$
	We explain how to realise $\varphi$-modules with $h = [E:\QQ_p]$-filtrations as vector bundles on the Fargues--Fontaine curve. These are related to multivariable $(\varphi,\Gamma)$-modules in the sense of Berger (cf. \cite{Berger2013}). 
	
	\begin{defn} Let $R$ be a ring $\varphi \colon R\to R$ an automorphism, $S$ a finite set, $R \to R'$ a ring extension. 
		We denote by $\operatorname{MF}^{\varphi}_{R,R',S},$ the category whose objects are finite free $R$-modules $D$ equipped with a $\varphi$-semi-linear automorphism and for each $s \in S$ an exhaustive increasing separated $\ZZ$-indexed filtration by $R'$-submodules $\operatorname{Fil}^i$ on $R'\otimes_RD .$ Equipped with the obvious notion of morphisms.
		
		An object of  $\operatorname{MF}^{\varphi}_{R,R',S}$ is called $\varphi$-module over $R$ with $\abs{S}$-filtrations. If $\abs{S}=1$ we omit $S$ from the notation. 
	\end{defn}
	\begin{ex}
		Let $V \in \operatorname{Rep}_E(G_K)$ be crystalline,
		then $D:=D_{cris}(V):= (\Bcris \otimes_{\QQ_p} V)^{G_K}$ is a finite free $K_0 \otimes_{\QQ_p}E$ module equipped and $\varphi_p \otimes \id$ induces an automorphism of $D_{cris}(V).$ 
		Furthermore, we have a natural filtration on $D_K:= K \otimes_{K_0} D = (K \otimes_{\QQ_p} E) \otimes_{K_0 \otimes_{\QQ_p}E} D = (\Bdr \otimes_{\QQ_p} V)^{G_K}$ induced by the $\ker(\theta)$-adic filtration on $\Bdr.$ We can view $D$ as an object in $\operatorname{MF}^{\varphi}_{K_0 \otimes_{\QQ_p}E,K \otimes_{\QQ_p}E}.$
		Alternatively we can use the decomposition 
		$$\Bcris \otimes_{\QQ_p} E \cong \prod_{i} \Bcris \otimes_{E_0,\varphi_p^i} E$$
		and hence $D = \bigoplus \varphi_p^i(D_0),$
		with $D_{0} = (\operatorname{B}_{cris}\otimes_{E_0}V)^{G_K},$ and the action of $\varphi_p$ on $D$ is uniquely determined by the action of $\varphi_q=\varphi_p^f$ on $D_0.$ Furthermore we have 
		$$K\otimes_{K_0}(K_0 \otimes_{\QQ_p}E) = \prod_{\Hom_{\QQ_p}(E_0,K)}\prod_{\Hom_{E_0}(E,K)} E\cong \prod_{\Hom_{\QQ_p}(E,K)}K.$$
		In other words, giving a filtration on $K \otimes_{K_0}D$ is equivalent to the data of $[E:\QQ_p]$-many filtrations on $K\otimes_{K_0\otimes_{E_0}E}D_0.$
		The argument provided shows that we have an equivalence of categories
		$$\operatorname{MF}^{\varphi_p \otimes \id_E }_{K_0 \otimes_{\QQ_p}E,K \otimes_{\QQ_p}E}\cong \operatorname{MF}^{\varphi_q}_{K_0 \otimes_{E_0}E,K,\Sigma_E},$$
		sending a filtered $(\varphi_p\otimes \id)$-module over $K_0 \otimes_{\QQ_p}E \cong \prod_{E_0 \to K} K_0\otimes_{E_0}E$ to the component $D_0$ at (a fixed) embedding $E_0 \to K$ and the $[E:\QQ_p]$-many filtrations obtained from the decomposition $K\otimes_{K_0}(K_0\otimes_{\QQ_p}E)  = \prod_{\sigma \in \Sigma_E}K.$ Meaning that we view $K \otimes_{K_0}D$ as a sum of isomorphic $K$-modules. The filtration on each summand defines (via the isomorphisms) a filtration on a fixed choice among the summands (equivalently a choice of embedding $E \to K$). 
	\end{ex}
	\subsubsection{Crystalline Bundles} Our next goal is to identify the crystalline vector bundles among $G_K$-equivariant bundles on $X_E.$
	The ``problem'' is that an $E$-linear representation is called crystalline, if it is crystalline as a $\QQ_p$-linear representation. When $E=\QQ_p$ the picture is very simple. A vector bundle $\mathcal{E}$ is crystalline if its restriction to $X_{\QQ_p}\setminus\{\infty\}$ is crystalline, which means that $\mathcal{E}(X\setminus\infty) \otimes_{\fB_e} B^+_{\QQ_p}[1/t_\infty]$ admits a $G_K$-invariant basis, or in other words  $D_{cris}(\mathcal{E}):=(\mathcal{E}(X\setminus\infty) \otimes_{\fB_e} B^+_{\QQ_p}[1/t_\infty])^{G_K}$ has $K_0$-dimension equal to the rank of $\mathcal{E}.$
	Set $$B_{e,S}:= H^0(X_E\setminus{S},\mathcal{O}_{X_E}) = (B_E^+[1/\prod_\sigma t_{\infty_\sigma}])^{\varphi_E=1},$$ which for $E=\QQ_p$ is the usual $\fB_e.$  
	Naively, we should replace $B_{\QQ_p}^+[1/t_\infty]$ by the ring 
	$B_E^+[1/t_\infty] = B_E^+[1/\prod_\sigma{t_{\infty_\sigma}}]$ (here we use the relationship between the $t_{\infty_\sigma}$ and $t_{cyc}$ from \ref{lem:bergermateo}). Unfortunately, the situation is slightly more delicate for the above mentioned reasons and we instead introduce the  ring $$B_S := B_{\QQ_p}^+[1/t_{\infty}] \otimes_{B^+_{\QQ_p}[1/t_\infty]^{\varphi_p=1}} B^+_E[1/\prod_{\sigma}(t_{\infty_\sigma})]^{\varphi_E=1}.$$ 
	The picture becomes clearer in the scheme theoretic language. We have (essentially by definition) a cartesian square 
	\[\begin{tikzcd}
		{\operatorname{Spec}(B_S)} & {X_E\setminus S} \\
		{\operatorname{Spec}(B_{\QQ_p}^+[1/t_\infty])} & {X_{\QQ_p}\setminus{\{\infty}\}}
		\arrow[from=1-1, to=1-2]
		\arrow[from=1-1, to=2-1]
		\arrow["p",from=1-2, to=2-2]
		\arrow[from=2-1, to=2-2]
	\end{tikzcd}\]
	Classicaly a vector bundle is crystalline if its pullback along the bottom map is the trivial representation. Similarly we will say that a vector bundle is crystalline if the restriction of scalars to $B^+_{\QQ_p}[1/t_\infty]$ of its pullback to $B_S$ is trivial. This boils down to the following definition.
	\begin{defn}
		A $G_K$ vector bundle $\mathcal{E}$ on $X_E$ is called \textbf{crystalline}, if the $G_K$-representation $\mathcal{E}(X\setminus{S})$ is \textbf{crystalline}, which by definition means, that 
		$B_{\QQ_p}^+[1/t_{\infty}] \otimes_{B^+_{\QQ_p}[1/t_\infty]^{\varphi_p=1}} \mathcal{E}(X\setminus{S})$ is trivial as a $B_{\QQ_p}^+[1/t_{\infty}]$ representation of $G_K.$ 
	\end{defn}
	\begin{rem} \label{rem:structure} We have $E \otimes_{\QQ_p}B_{\QQ_p}^+[1/t_{\infty}] = B_S$ and $B_S^{G_K}=E\otimes_{\QQ_p}K_0.$ 
	\end{rem}
	\begin{proof}
		Use $B_E^{\varphi_E=1} = E\otimes_{\QQ_p} \fB_e$ and hence 
		$$B_S = B_{\QQ_p}^+[1/t_{\infty}] \otimes_{\fB_e}\fB_e\otimes_{\QQ_p}E = E\otimes_{\QQ_p} B_{\QQ_p}^+[1/t_{\infty}].$$ The second part follows from the first using $$K_0\subset B_{\QQ_p}^+[1/t_{\infty}]^{G_K} \subset \operatorname{Frac}( B_{\QQ_p}^+)^{G_K}=K_0.$$
	\end{proof}
	Using the above Remark, we can equip $B_S$ with the $E$-linear Frobenius $\varphi_p\otimes \id,$ which induces $\varphi_p \otimes \id$ on $K_0 \otimes_{\QQ_p} E.$ 
	\begin{rem} \label{rem:cris} Let $\mathcal{E}$ be a $G_K$-bundle on $X_E.$ 
		We have that 
		$D_S(\mathcal{E}):=H^0(G_K,B_{\QQ_p}^+[1/t_{\infty}] \otimes_{B^+_{\QQ_p}[1/t_\infty]^{\varphi_p=1}} \mathcal{E}(X\setminus{S}))$ is free as a $K_0 \otimes_{\QQ_p}E$-module and the following are equivalent
		\begin{enumerate}[(i)]
			\item $\mathcal{E}$ is crystalline.
			\item $p_*\mathcal{E}$ is crystalline in the sense of \cite{farguesfontainecourbe}.
			\item $\operatorname{dim}_{K_0}(D_S(\mathcal{E}))=[E:\QQ_p]\operatorname{rank}(\mathcal{E}).$
			\item $\operatorname{rank}_{K_0\otimes_{\QQ_p}E}(D_S(\mathcal{E})) = \operatorname{rank}(\mathcal{E}).$
		\end{enumerate}
		
	\end{rem}
	\begin{proof}
		First of all, using the discussion after Remark \ref{rem:structure} we can view $D_S(\mathcal{E})$ as a finite $K_0\otimes_{\QQ_p}E$-module equipped with a $\varphi_p\otimes\id$ semilinear automorphism.
		By the same argument as in  \cite[Lemma 1.30]{nakamura2009classification}
		(with $K_0$ instead of $B^{\dagger}_{rig,K}$) we get freeness as a $K_0 \otimes_{\QQ_p}E$-module. The equivalence of \textit{(iii)} and \textit{(iv)} is clear using freeness. 
		The equivalence of \textit{(i)} and \textit{(ii)} follows by unwinding the definitions (note that $p_*$ is just restriction of scalars for quasi-coherent modules on affine schemes).
		The equivalence of \textit{(ii)} and \textit{(iii)} is \cite[Proposition 10.2.12]{farguesfontainecourbe}.
	\end{proof}
	Let us consider the pair of functors:
	\begin{align*}
		D_S\colon	\operatorname{Rep}_{B_{e,S}}(G_K) & \to \varphi_p\text{-}\operatorname{Mod}_{K_0\otimes_{\QQ_p}E} \\
		V& \mapsto (B_S \otimes_{B_{e,S}} V)^{G_K}
	\end{align*}
	\begin{align*}
		V_S \colon \varphi_p\text{-}\operatorname{Mod}_{K_0\otimes_{\QQ_p}E}  & \to 	\operatorname{Rep}_{B_{e,S}}(G_K) \\	
		D &\mapsto (B_S \otimes_{K_0\otimes_{\QQ_p} E} D)^{\varphi_p=1}.
	\end{align*}

	\begin{thm} \label{thm:cris}
		The functor $V_S$ is well-defined and fully faithful, $D_S$ is a right-adjoint. For each $M \in \operatorname{Rep}_{B_{e,S}}(G_K)$ there is a natural inclusion 
		$$V_S(D_S(M)) \hookrightarrow M,$$
		which is an isomorphism if and only if $\operatorname{rank}_{K_0\otimes_{\QQ_p}E}(D_S(M))= \operatorname{rank}_{B_{e,S}}(M).$
	\end{thm}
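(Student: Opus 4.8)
\section*{Proof plan}

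The plan is to deduce everything from the classical case $E=\QQ_p$ treated in \cite[\S 10.2]{farguesfontainecourbe}, the only new point being to carry along the commuting, and therefore harmless, $E$-action. First I would record the basic structure. By Remark \ref{rem:structure} one has $B_S = E\otimes_{\QQ_p}B^+_{\QQ_p}[1/t_\infty]$ with $\varphi_p=\id_E\otimes\varphi_p$, so $B_S$ is finite free over $B^+_{\QQ_p}[1/t_\infty]$, and $B_S^{\varphi_p=1}=E\otimes_{\QQ_p}\fB_e=B_{e,S}$, $B_S^{G_K}=K_0\otimes_{\QQ_p}E$, $B_{e,S}^{G_K}=E$. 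Applying the exact functor $E\otimes_{\QQ_p}(-)$ to the classical exact sequence over $B^+_{\QQ_p}[1/t_\infty]$ gives
$$0\longrightarrow B_{e,S}\longrightarrow B_S\xrightarrow{\ \varphi_p-1\ }B_S\longrightarrow 0.$$
Restricting scalars along $\QQ_p\hookrightarrow E$, together with the canonical identifications $B_S\otimes_{K_0\otimes_{\QQ_p}E}D\cong B^+_{\QQ_p}[1/t_\infty]\otimes_{K_0}D$ and $B_S\otimes_{B_{e,S}}M\cong B^+_{\QQ_p}[1/t_\infty]\otimes_{\fB_e}M$, identifies $V_S$ and $D_S$, after forgetting the $E$-action, with the crystalline functors $V_{\mathrm{cris}}$ and $D_{\mathrm{cris}}$ for $X_{\QQ_p}$.

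Given this, well-definedness of $V_S$ goes as follows. By the classical case $(B^+_{\QQ_p}[1/t_\infty]\otimes_{K_0}D)^{\varphi_p=1}$ is finite free over $\fB_e$ of rank $[E:\QQ_p]\operatorname{rank}_{K_0\otimes_{\QQ_p}E}(D)$ with continuous semilinear $G_K$-action, and the natural map $B^+_{\QQ_p}[1/t_\infty]\otimes_{\fB_e}V_S(D)\to B^+_{\QQ_p}[1/t_\infty]\otimes_{K_0}D$ is an isomorphism. The residual $E$-action makes $V_S(D)$ a module over $B_{e,S}=E\otimes_{\QQ_p}\fB_e$, and finite freeness over $B_{e,S}$ follows by faithfully flat descent from the freeness of $B_S\otimes_{B_{e,S}}V_S(D)\cong B_S\otimes_{K_0\otimes_{\QQ_p}E}D$ over $B_S$, exactly as in the argument of \cite[Lemma 1.30]{nakamura2009classification} already invoked for $D_S$ in Remark \ref{rem:cris}; in particular $\operatorname{rank}_{B_{e,S}}V_S(D)=\operatorname{rank}_{K_0\otimes_{\QQ_p}E}D$.

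Next I would set up the adjunction concretely. The unit is $\eta_D\colon D\to D_S(V_S(D))$, $d\mapsto 1\otimes d$; the counit $\epsilon_M\colon V_S(D_S(M))\to M$ is obtained from the $\varphi_p$-equivariant multiplication map $B_S\otimes_{K_0\otimes_{\QQ_p}E}(B_S\otimes_{B_{e,S}}M)^{G_K}\to B_S\otimes_{B_{e,S}}M$ by passing to $\varphi_p=1$-invariants, using $B_S^{\varphi_p=1}=B_{e,S}$ on the source and the exact sequence above tensored with the flat $B_{e,S}$-module $M$ to identify $(B_S\otimes_{B_{e,S}}M)^{\varphi_p=1}=M$ on the target. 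Naturality, linearity, $G_K$-equivariance and the triangle identities are then formal, so $D_S$ is right adjoint to $V_S$, and $V_S$ is fully faithful if and only if $\eta_D$ is an isomorphism for all $D$, which under the reduction above is the classical identity $D_{\mathrm{cris}}\circ V_{\mathrm{cris}}=\id$. For the counit: $M\hookrightarrow B_S\otimes_{B_{e,S}}M$ since $B_{e,S}\to B_S$ is injective, and combining this with the classical injectivity of $B^+_{\QQ_p}[1/t_\infty]\otimes_{K_0}D_{\mathrm{cris}}(M)\to B^+_{\QQ_p}[1/t_\infty]\otimes_{\fB_e}M$ and passing to $\varphi_p=1$-invariants shows $\epsilon_M$ is injective. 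If $\epsilon_M$ is an isomorphism then $\operatorname{rank}_{K_0\otimes_{\QQ_p}E}D_S(M)=\operatorname{rank}_{B_{e,S}}V_S(D_S(M))=\operatorname{rank}_{B_{e,S}}M$; conversely, if these ranks agree then $M$ is crystalline in the sense of Remark \ref{rem:cris}, hence lies in the essential image of $V_S$, namely $M\cong V_S(D_S(M))$ by \cite[Proposition 10.2.12]{farguesfontainecourbe}, so the injection $\epsilon_M$ is an isomorphism.

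The substantive inputs — freeness of $V_{\mathrm{cris}}$ and the trivialization isomorphism, the identity $D_{\mathrm{cris}}\circ V_{\mathrm{cris}}=\id$, injectivity of the comparison map, and the crystalline admissibility criterion — are all imported from \cite[\S 10.2]{farguesfontainecourbe}, so the remaining work is organizing the reduction to $E=\QQ_p$ and checking that the commuting $E$-action is harmless, principally upgrading freeness from $\fB_e$ to $B_{e,S}$ by faithfully flat descent. The step I expect to be the main obstacle is the ``if'' direction of the final assertion: a full-rank injection of finite free $B_{e,S}$-modules need not be an isomorphism, so surjectivity of $\epsilon_M$ genuinely requires the Fargues--Fontaine admissibility statement (through Remark \ref{rem:cris}) rather than commutative algebra alone.
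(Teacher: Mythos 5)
Your proof is correct, and it leans on the same substantive input as the paper: everything is ultimately \cite[Proposition 10.2.12]{farguesfontainecourbe}. The organisation differs slightly, though. The paper's proof is a one-liner: it checks $B_S^{\varphi_p\otimes\id=1}=B_{e,S}$ (well-definedness) and then re-runs Fargues--Fontaine's argument verbatim with the period ring $R=B_S$ in place of $B^+_{\QQ_p}[1/t_\infty]$, so the $E$-linear structure is built into the ring from the start. You instead restrict scalars along $\QQ_p\hookrightarrow E$, identify $V_S$ and $D_S$ with the classical crystalline functors for $X_{\QQ_p}$ via $B_S\otimes_{K_0\otimes_{\QQ_p}E}D\cong B^+_{\QQ_p}[1/t_\infty]\otimes_{K_0}D$, quote the Fargues--Fontaine statement as a black box, and then recover the $B_{e,S}$-linear freeness and rank bookkeeping by descent (here one can even avoid descent: $V_S(D)$ is finitely generated and torsion-free over the PID $B_{e,S}$, hence free, with the rank read off after base change to $B_S$). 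What your route buys is that no part of the FF proof needs to be re-examined for the new ring $B_S$; what the paper's route buys is that the $E$-structure never has to be re-installed after the fact, and the adjunction/counit statements come out literally in the form asserted. Two small remarks: your appeal to surjectivity of $\varphi_p-1$ on $B_S$ is not needed (for $M$ finite free over $B_{e,S}$ the identification $(B_S\otimes_{B_{e,S}}M)^{\varphi_p=1}=M$ only uses $B_S^{\varphi_p=1}=B_{e,S}$, which is exactly the point the paper isolates), and your closing observation is the right one — surjectivity of the counit in the ``if'' direction cannot come from commutative algebra and must come from the admissibility criterion of loc.\ cit., which is also where the paper gets it.
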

	\begin{proof}
		To see that $V_S$ is well defined note that $B_S^{(\varphi_p\otimes\id)=1} = \fB_e \otimes_{\QQ_p}E= B_{e,S}$ using Remark \ref{rem:structure}, Proposition \ref{prop:bundletoBpair} \textit{(i)} and Lemma \ref{lem:bergermateo}. 
		The remaining part of the proof works exactly as \cite[Proposition 10.2.12]{farguesfontainecourbe} by taking $R= B_S.$
	\end{proof}

	\begin{defn} Let $D \in \operatorname{MF}^{\varphi_q}_{K_0\otimes_{E_0}E,K,\Sigma_E}.$ Let $S = \{\infty_\sigma \mid \sigma \in \Sigma_E\}.$
		We denote by $\mathcal{V}(D)$ the $G_K$-equivariant vector bundle with 
		$$\mathcal{V}(D)(X\setminus S) = (B_E^+[1/\prod_\sigma t_{\infty_\sigma}]\otimes_{K_0\otimes_{E_0}E} D)^{\varphi_E=1}$$ 
		and $$\mathcal{V}(D)_{\infty_\sigma} = \operatorname{Fil}_\sigma^0(\Bdr \otimes_K D)$$ for $\sigma \in \Sigma_E.$
	\end{defn}
	
	\begin{prop} \label{prop:cryssimp}
		Let $D$ be a finite dimensional $K_0\otimes_{E_0}E$-vector space \footnote{Note that $K_0 \otimes_{E_0}E$ is indeed a field.} with a $\varphi_q\otimes\id$-semi-linear automorphism $\varphi_q.$ Then $\mathcal{V}(D)$ is crystalline. 
	\end{prop}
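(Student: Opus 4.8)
The plan is to reduce to the criterion of Remark~\ref{rem:cris} and to exhibit $\mathcal{V}(D)(X\setminus S)$ as an object in the essential image of the functor $V_S$ of Theorem~\ref{thm:cris}. Concretely, I will produce a $\varphi_p$-module $\widetilde{D}$ over $K_0\otimes_{\QQ_p}E$ together with a $G_K$-equivariant, $B_{e,S}$-linear isomorphism $V_S(\widetilde{D})\cong\mathcal{V}(D)(X\setminus S)$. Granting this, $D_S(\mathcal{V}(D))=D_S(V_S(\widetilde D))\cong\widetilde D$ — either because the unit of the adjunction is an isomorphism ($V_S$ being fully faithful by Theorem~\ref{thm:cris}), or directly from $B_S^{G_K}=K_0\otimes_{\QQ_p}E$ (Remark~\ref{rem:structure}) together with $\widetilde D$ being finite free over $K_0\otimes_{\QQ_p}E$. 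Since $\widetilde D$ has the same rank over $K_0\otimes_{\QQ_p}E$ as $\mathcal{V}(D)(X\setminus S)$ has over $B_{e,S}$, namely $\dim_{K_0\otimes_{E_0}E}D=\operatorname{rank}(\mathcal{V}(D))$, the equivalence \textit{(i)}$\Leftrightarrow$\textit{(iv)} of Remark~\ref{rem:cris} then finishes the proof.

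The construction of $\widetilde D$ and of the isomorphism rests on Frobenius descent along $E_0/\QQ_p$. First, by Lemma~\ref{lem:bergermateo} and Proposition~\ref{prop:elementetinftysigma} the element $\prod_\sigma t_{\infty_\sigma}$ is a $\breve{E}^\times\subset(B_E^+)^\times$-multiple of $t_{cyc}$, so $\mathcal{V}(D)(X\setminus S)=(B_E^+[1/t_{cyc}]\otimes_{K_0\otimes_{E_0}E}D)^{\varphi_E=1}$ with $\varphi_E=\id_E\otimes\varphi_p^{\,f}$ on $B_E^+=E\otimes_{E_0}B^+_{\QQ_p}$, where $f=f(E/\QQ_p)$. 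Since $E_0\subset\breve{\QQ_p}\subset B^+_{\QQ_p}$ and $E_0/\QQ_p$ is Galois, the inclusion $K_0\hookrightarrow B^+_{\QQ_p}$ identifies $\Hom_{\QQ_p}(E_0,K_0)$ with $\Hom_{\QQ_p}(E_0,B^+_{\QQ_p})$ and yields compatible product decompositions
\[
K_0\otimes_{\QQ_p}E=\prod_{\tau}(K_0\otimes_{E_0}E),\qquad B_S=E\otimes_{\QQ_p}B^+_{\QQ_p}[1/t_\infty]=\prod_{\tau}B_E^+[1/t_{cyc}],
\]
on which the Frobenius $\varphi_p$ of $B_S$ (that of Remark~\ref{rem:structure}) cyclically permutes the factors, with $\varphi_p^{\,f}$ acting on each factor as $\varphi_E$. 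Hence a $\varphi_p$-module over $K_0\otimes_{\QQ_p}E$ is the same datum as a $\varphi_q$-module over a single factor $K_0\otimes_{E_0}E$ (this is the equivalence spelled out in the Example, the filtrations being forgotten); let $\widetilde D=\prod_\tau D_\tau$, with each $D_\tau$ isomorphic to $D$, be the one corresponding to $D$. Then $B_S\otimes_{K_0\otimes_{\QQ_p}E}\widetilde D=\prod_\tau\bigl(B_E^+[1/t_{cyc}]\otimes_{K_0\otimes_{E_0}E}D_\tau\bigr)$, a $\varphi_p$-fixed vector is determined by its component in the distinguished factor, and that component is exactly constrained to be $\varphi_E$-fixed; this produces the isomorphism $V_S(\widetilde D)\xrightarrow{\ \sim\ }\mathcal{V}(D)(X\setminus S)$. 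It is $G_K$-equivariant because $G_K$ fixes $E_0\subset K$ and therefore commutes with the idempotents defining the factor decompositions.

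The only genuine difficulty is bookkeeping: one must carry the three decompositions (of $K_0\otimes_{\QQ_p}E$, of $B_S$, and of $\widetilde D$) coherently indexed, keep track of the embeddings $E_0\hookrightarrow\breve{\QQ_p}$ hidden in the identification $K_0\otimes_{\QQ_p}E\cong\prod_\tau(K_0\otimes_{E_0}E)$, and reconcile the paper's convention $\varphi_p\otimes\id$ for the Frobenius of $B_S$ with $\varphi_E=\id_E\otimes\varphi_p^{\,f}$ on $B_E^+$; once these are pinned down the argument is formal. (Alternatively one can argue geometrically: $p_*\mathcal{V}(D)$ is the vector bundle on $X_{\QQ_p}$ attached by Fargues--Fontaine to the filtered $\varphi$-module over $(K_0\otimes_{\QQ_p}E,\,K\otimes_{\QQ_p}E)$ obtained from $D$ by repackaging the $\Sigma_E$-indexed filtrations as a single filtration over $K\otimes_{\QQ_p}E$, hence is crystalline in the sense of \cite{farguesfontainecourbe}, which by \textit{(ii)}$\Leftrightarrow$\textit{(i)} of Remark~\ref{rem:cris} is precisely the assertion.)
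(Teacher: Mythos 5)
Your argument is correct and is essentially the paper's own proof: your $\widetilde D=\prod_\tau D_\tau$ with the cyclic $\varphi_p$ is exactly the induced module $\operatorname{Ind}_{\varphi_q}^{\varphi_p}D=E_0\otimes_{\QQ_p}D$ used there, your factor-by-factor identification of $\varphi_p$-fixed points with $(B_E^+[1/t_\infty]\otimes_{K_0\otimes_{E_0}E}D)^{\varphi_q=1}$ is the Shapiro-lemma step, and the conclusion via Theorem \ref{thm:cris} together with Remark \ref{rem:cris} is the same. Your parenthetical geometric alternative via $p_*\mathcal{V}(D)$ and Remark \ref{rem:cris}\,(ii) is a reasonable sketch but amounts to the same bookkeeping.
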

	\begin{proof}
		Let us write $K_0 \otimes_{\QQ_p}E = K_0\otimes_{E_0}E_0\otimes_{\QQ_p}E= \prod_{i=0}^{f-1} K_0\otimes_{\varphi_p^i,E_0} E$ and write $\operatorname{Ind}_{\varphi_q}^{\varphi_p}D:= E_0 \otimes_{\QQ_p}D = \prod D$ and define $\varphi_p\colon \operatorname{Ind}_{\varphi_q}^{\varphi_p}D \to \operatorname{Ind}_{\varphi_q}^{\varphi_p}D$ by setting $$(x_1,\dots,x_f) \mapsto (\varphi_E(x_f),x_1,\dots,x_{f-1}).$$ 
		We hence obtain a $\varphi_p\otimes\id$-semilinear endomorphism of $\operatorname{Ind}_{\varphi_q}^{\varphi_p}D.$ By Theorem \ref{thm:cris} and  Remark \ref{rem:cris} the $\fB_e$-representation 
		$$(B_S \otimes_{K_0\otimes_{\QQ_p}E}\operatorname{Ind}_{\varphi_q}^{\varphi_p}D)^{\varphi_p=1}$$ is crystalline. 
	We have isomorphisms \[K_0\otimes_{\QQ_p}E \cong E_0 \otimes_{\QQ_p}(K_0 \otimes_{E_0} E) \] and \[B_S = E\otimes_{\QQ_p}B_{\QQ_p}^+[1/t_\infty] \cong E_0 \otimes_{\QQ_p} E\otimes_{E_0}B_{\QQ_p}^+[1/t_\infty] \cong E_0 \otimes_{\QQ_p} B_E^+[1/t_\infty].\]
	Hence we have
$B_S \otimes_{K_0\otimes_{\QQ_p}E}\operatorname{Ind}_{\varphi_q}^{\varphi_p}D \cong E_0\otimes_{\QQ_p} (B_E^+[1/t_\infty] \otimes_{K_0\otimes_{E_0}E} D).$ If we equip the right hand side with the induced $\varphi_p$ action (as in the construction of $\operatorname{Ind}_{\varphi_q}^{\varphi_p}D$)
we can conclude $$(B_S \otimes_{K_0\otimes_{\QQ_p}E}\operatorname{Ind}_{\varphi_q}^{\varphi_p}D)^{\varphi_p=1} \cong (B_E^+[1/t_\infty] \otimes_{K_0\otimes_{E_0}E} D)^{\varphi_q=1}$$ by Shapiros Lemma. Because being crystalline only depends on the underlying $\fB_e$-representation, we see that $$\mathcal{V}(D)(X\setminus S) = (B_E^+[1/t_\infty] \otimes_{K_0\otimes_{E_0}E} D)^{\varphi_q=1}$$ is crystalline.
	\end{proof}
	For $E=K$ (i.e. $E_0=K_0,$ $K_0 \otimes_{E_0}E=E,$ and $\varphi_q\otimes \id_E= \id_E$) we give a less technical characterisation of crystalline bundles. 
	\begin{thm}\label{thm:equivalencecris}Let $\mathcal{E}$ be a $G_E$-equivariant vector bundle on $X_E.$ Let $S =  \{\infty_\sigma \mid \sigma \in \Sigma_E\}$ and $U:=X_E\setminus S.$
		
		The following are equivalent 
		\begin{enumerate}[(i)]
			\item $\mathcal{E}$ is crystalline.
			\item There exists a $\varphi_p\otimes \id$-module $\tilde{D}$ over $E_0 \otimes_{\QQ_p} E$ and $G_E$-equivariant isomorphism 
			$\mathcal{E}(U) = V_S(\tilde{D}) (= (B_S\otimes_{E_0 \otimes_{\QQ_p} E} \tilde{D})^{\varphi_p=1})),$  where $G_E$ acts trivially on $\tilde{D}.$
			
			\item There exists a $\varphi_{q}$-module $D$ over $E$ and $G_E$-equivariant isomorphism 
			$\mathcal{E}(U) =  \mathcal{V}(D)(=(B_E^+[1/\prod_\sigma t_{\infty_\sigma}]\otimes_E D)^{\varphi_q=1}),$  where $G_E$ acts trivially on $D.$
			
		\end{enumerate}
		\begin{proof} The equivalence of \textit{(i)} and \textit{(ii)} follows by combining Theorem \ref{thm:cris} and Remark \ref{rem:cris}. The implication
			\textit{(iii)} implies \textit{(ii)} is implicit in the proof of Proposition \ref{prop:cryssimp}, where $\tilde{D}$ is constructed given $D.$ Lastly it remains to show \textit{(ii)} implies \textit{(iii)}. To this end it suffices to show that a $\varphi_p\otimes \id$-module $\tilde{D}$ over $E_0\otimes_{\QQ_p} E$ is of the form $E_0\otimes_{\QQ_p} D$ for a suitable $D.$ This is just Galois descent for $E_0/\QQ_p.$
		\end{proof}
		
	\end{thm}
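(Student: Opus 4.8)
The plan is to establish the cycle $(iii)\Rightarrow(ii)\Rightarrow(i)$ and $(i)\Rightarrow(ii)\Rightarrow(iii)$, relying on the machinery already assembled. The equivalence of $(i)$ and $(ii)$ should be essentially formal: by Theorem~\ref{thm:cris} the functor $V_S$ is fully faithful with right adjoint $D_S$, and the unit $V_S(D_S(M))\hookrightarrow M$ is an isomorphism precisely when the ranks match. Unravelling the definition of crystalline (the $G_K$-representation $\mathcal{E}(X\setminus S)$ becomes trivial after $\otimes_{\fB_e}B_{\QQ_p}^+[1/t_\infty]$), together with Remark~\ref{rem:cris}, one sees that $\mathcal{E}$ crystalline means exactly $D_S(\mathcal{E}(U))$ has full rank over $E_0\otimes_{\QQ_p}E$, which by the isomorphism criterion in Theorem~\ref{thm:cris} is the same as saying $\mathcal{E}(U)\cong V_S(\tilde D)$ for $\tilde D=D_S(\mathcal{E}(U))$ with trivial $G_E$-action. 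So $(i)\Leftrightarrow(ii)$ is just bookkeeping.

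Next I would handle $(iii)\Rightarrow(ii)$. This is already contained in the proof of Proposition~\ref{prop:cryssimp}: there, given a $\varphi_q$-module $D$ over $E$, one forms $\operatorname{Ind}_{\varphi_q}^{\varphi_p}D = E_0\otimes_{\QQ_p}D$ with its induced $\varphi_p\otimes\id$-semilinear automorphism, and Shapiro's lemma gives a $G_E$-equivariant isomorphism
$$(B_S\otimes_{K_0\otimes_{\QQ_p}E}\operatorname{Ind}_{\varphi_q}^{\varphi_p}D)^{\varphi_p=1}\cong (B_E^+[1/\textstyle\prod_\sigma t_{\infty_\sigma}]\otimes_E D)^{\varphi_q=1}=\mathcal{V}(D)(U).$$
Setting $\tilde D := \operatorname{Ind}_{\varphi_q}^{\varphi_p}D$, which in the case $E=K$ is a $\varphi_p\otimes\id$-module over $E_0\otimes_{\QQ_p}E$, shows $(iii)\Rightarrow(ii)$ with the prescribed $\tilde D$. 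I should be careful to note that in the relevant case $E=K$ we have $K_0\otimes_{E_0}E=E$, so the notation of Proposition~\ref{prop:cryssimp} specialises correctly and $G_E$ acts trivially throughout.

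Finally, $(ii)\Rightarrow(iii)$. Given $\tilde D$, a $\varphi_p\otimes\id$-module over $E_0\otimes_{\QQ_p}E$, the task is to recognise it as $\operatorname{Ind}_{\varphi_q}^{\varphi_p}D = E_0\otimes_{\QQ_p}D$ for some $\varphi_q$-module $D$ over $E$; once this is done, the chain of isomorphisms from $(iii)\Rightarrow(ii)$ run backwards identifies $\mathcal{E}(U)$ with $\mathcal{V}(D)(U)$. The point is that $E_0\otimes_{\QQ_p}E\cong\prod_{i=0}^{f-1}E$ (using $\operatorname{Gal}(E_0/\QQ_p)$), and a $\varphi_p\otimes\id$-semilinear automorphism on such a product permutes the factors cyclically, so the whole datum is equivalent to a single $E$-module with a $\varphi_q$-semilinear automorphism on one factor — this is Galois descent for $E_0/\QQ_p$ applied to the $\varphi$-module category (equivalently, the statement $\varphi\text{-}\operatorname{Mod}_{E_0\otimes_{\QQ_p}E}\simeq\varphi_q\text{-}\operatorname{Mod}_E$ used already in the Example). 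I expect this last descent step to be the only place requiring genuine (though standard) argument; everything else is assembling Theorem~\ref{thm:cris}, Remark~\ref{rem:cris}, and Proposition~\ref{prop:cryssimp}. The main subtlety to watch is keeping the two filtrations/structures straight, but since $(i)$--$(iii)$ concern only the restriction $\mathcal{E}(U)$ and not the lattices at $S$, the filtration data play no role here.
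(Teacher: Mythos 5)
Your proposal is correct and follows essentially the same route as the paper: the equivalence of (i) and (ii) from Theorem \ref{thm:cris} together with Remark \ref{rem:cris}, the implication (iii)$\Rightarrow$(ii) via the induction $\operatorname{Ind}_{\varphi_q}^{\varphi_p}D = E_0\otimes_{\QQ_p}D$ and Shapiro's lemma as in Proposition \ref{prop:cryssimp}, and (ii)$\Rightarrow$(iii) by descent along $E_0/\QQ_p$ (your explicit unpacking via the cyclic permutation of the factors of $E_0\otimes_{\QQ_p}E\cong\prod_i E$ is exactly what the paper's appeal to Galois descent amounts to). No gaps.
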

	\begin{thm} Consider the diagram of functors
		\[\begin{tikzcd}
			{\operatorname{Rep}_E^{E\text{-}\operatorname{cris}}(G_E)} & {\operatorname{Rep}_E^{\operatorname{cris}}(G_E)} & {\operatorname{Bun}^{\text{cris}}_{X_E}} \\
			{\operatorname{MF}^{\varphi_q}_{E,E,\{\id \}}} & {\operatorname{MF}^{\varphi_q}_{E,E,\Sigma_K}} & {\operatorname{Bun}^{\text{cris}}_{X_E}},
			\arrow[hook, from=1-1, to=1-2]
			\arrow["{D_{\text{cris},E}}", from=1-1, to=2-1]
			\arrow[hook, from=1-2, to=1-3]
			\arrow["{D_{\text{cris}}}", from=1-2, to=2-2]
			\arrow["{= }", no head, from=1-3, to=2-3]
			\arrow[hook, from=2-1, to=2-2]
			\arrow["{\mathcal{V}(-)}", from=2-2, to=2-3]
		\end{tikzcd}\]
		where we denote by abuse of notation $$D_{cris}\colon \operatorname{Rep}_E^{E\text{-}\operatorname{cris}}(G_E)\xrightarrow{D_{cris}}MF^{\varphi_p\otimes\id}_{E_0\otimes_{\QQ_p}E,E\otimes_{\QQ_p}E,\id}\cong MF^{\varphi_q}_{E,E,\Sigma_K},$$
		and
		the bottom left map is given by setting the filtration to be trivial at $\sigma \neq \id.$
		Then the diagram commutes and
		
		\begin{enumerate}
			\item The essential image of the left and middle vertical map consist of weakly admissible objects.
			\item 
			The functor $\mathcal{V}(-)$ is an equivalence of categories.
			\item The functor $\mathcal{V}(-)$ restricts to an equivalence between weakly admissible $\varphi_E$-modules with $[E:\QQ_p]$-filtrations and crystalline vector bundles of slope $0.$
			\item The essential image of the composite 
			$	{\operatorname{MF}^{\varphi_q}_{E,E,\{\id \}}} \to {\operatorname{Bun}^{\text{cris}}_{X_E}} $ 
			consists of bundles which are $E$-crystalline. 
		\end{enumerate}
		
	\end{thm}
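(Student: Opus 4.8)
The plan is to reduce each assertion to the earlier results Theorem~\ref{thm:cris}, Remark~\ref{rem:cris}, Proposition~\ref{prop:cryssimp}, Theorem~\ref{thm:equivalencecris} and Theorem~\ref{thm:bundequiv}, together with the $E$-linear forms of Fontaine's ``crystalline $\Rightarrow$ weakly admissible'' and Colmez--Fontaine's ``weakly admissible $\Rightarrow$ admissible''. Commutativity of the diagram and the two descriptions of $D_{\mathrm{cris}}$ come out of the Example and the dictionary $\operatorname{MF}^{\varphi_p\otimes\id}_{E_0\otimes_{\QQ_p}E,E\otimes_{\QQ_p}E}\cong\operatorname{MF}^{\varphi_q}_{E,E,\Sigma_E}$ recorded there: under it the classical $D_{\mathrm{cris}}(V)$ corresponds to the object whose $\Sigma_E$-filtrations are induced by the $\ker\theta$-adic filtration at the points $\infty_\sigma$, so the left square commutes precisely because $E$-analytic means the filtration is trivial outside $\id$, while the right square commutes because $\mathcal{V}(D_{\mathrm{cris}}(V))\cong\mathcal{E}(V)$ (the slope-$0$ instance of (3) below). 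So the real point is that $\mathcal{V}(-)$ is an equivalence, i.e.\ assertion (2).

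For essential surjectivity of $\mathcal{V}(-)$, let $\mathcal{E}$ be crystalline, put $S=\{\infty_\sigma\mid\sigma\in\Sigma_E\}$ and $U=X_E\setminus S$. By Theorem~\ref{thm:equivalencecris} there is a $\varphi_q$-module $D_0$ over $E$, with trivial $G_E$-action, and a $G_E$-equivariant isomorphism $\mathcal{E}(U)\cong(B_E^+[1/\prod_\sigma t_{\infty_\sigma}]\otimes_E D_0)^{\varphi_q=1}$. Completing at $\infty_\sigma$ and applying $\varphi$-descent (as in the proof of Theorem~\ref{thm:cris}) yields a canonical $G_E$-equivariant identification $W(\mathcal{E})_{\mathrm{dR},\sigma}\cong\Bdr\otimes_{E,\sigma}D_0$, under which the $G_E$-stable $\Bdr^+$-lattice $\widehat{\mathcal{E}_{\infty_\sigma}}$ becomes a $G_E$-stable $\Bdr^+$-lattice of $\Bdr\otimes_{E,\sigma}D_0$ with $G_E$ acting only on the first factor; by the standard correspondence between such lattices and exhaustive separated $\ZZ$-filtrations on $D_0$ it equals $\operatorname{Fil}_\sigma^0$ of a unique such filtration. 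Endowing $D_0$ with these $\Sigma_E$ filtrations produces $D\in\operatorname{MF}^{\varphi_q}_{E,E,\Sigma_E}$ with $\mathcal{V}(D)\cong\mathcal{E}$ by Proposition~\ref{prop:BVLE}. For full faithfulness, a morphism $\mathcal{V}(D)\to\mathcal{V}(D')$ is by Theorem~\ref{thm:bundequiv} a morphism of $G_E$-$E$-$B$-pairs; its $U$-component corresponds, by full faithfulness of $V_S$ in Theorem~\ref{thm:cris} and the $\operatorname{Ind}_{\varphi_q}^{\varphi_p}$/Shapiro/Galois-descent argument from the proof of Proposition~\ref{prop:cryssimp}, to a unique $\varphi_q$-linear map $D_0\to D_0'$, and compatibility with the $\Bdr^+$-lattices is exactly the condition that it be filtered. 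This gives (2).

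For (1), a crystalline (resp.\ $E$-crystalline) $E$-representation $V$ of $G_E$ is crystalline as a $\QQ_p$-representation, so by Fontaine its classical filtered $\varphi$-module is weakly admissible; transporting along the dictionary of the Example and using that $\QQ_p$-linear weak admissibility implies $E$-linear weak admissibility for objects carrying an $E$-action (standard in the theory of $E$-$B$-pairs), $D_{\mathrm{cris}}(V)$ and $D_{\mathrm{cris},E}(V)$ are weakly admissible. For (3), by (2) it suffices to see that $D$ is weakly admissible if and only if $\mathcal{V}(D)$ has slope $0$: if $\mathcal{V}(D)$ has slope $0$ then, by the discussion in the proof of Theorem~\ref{thm:bundequiv}, $\mathcal{V}(D)\cong\mathcal{E}(V)$ for some $V\in\operatorname{Rep}_E(G_E)$, necessarily crystalline with $D_{\mathrm{cris}}(V)\cong D$, so $D$ is weakly admissible by (1); conversely the $E$-linear Colmez--Fontaine theorem (deduced from the $\QQ_p$-case via the Example) attaches to a weakly admissible $D$ a crystalline $V\in\operatorname{Rep}_E(G_E)$ with $D_{\mathrm{cris}}(V)\cong D$, whence $\mathcal{V}(D)\cong\mathcal{E}(V)$ has slope $0$. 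For (4), if $D$ has trivial filtration at every $\sigma\neq\id$ then $\mathcal{V}(D)_{\infty_\sigma}=\Bdr^+\otimes_{E,\sigma}D$ for such $\sigma$, so Beauville--Laszlo gluing (Proposition~\ref{prop:BVLE}) identifies $\mathcal{V}(D)(X_E\setminus\{\infty_{\id}\})$ with $(B_E^+[1/t_{\mathrm{LT}}]\otimes_E D)^{\varphi_q=1}$ (using $t_{\infty_{\id}}\in o_E^\times t_{\mathrm{LT}}$, cf.\ Proposition~\ref{prop:elementetinftysigma}); base-changing along $(B_E^+[1/t_{\mathrm{LT}}])^{\varphi_q=1}\to B_E^+[1/t_{\mathrm{LT}}]$ and applying $\varphi$-descent over $B_E^+[1/t_{\mathrm{LT}}]$ gives $B_E^+[1/t_{\mathrm{LT}}]\otimes_E D$ with trivial $G_E$-action on $D$, i.e.\ $\mathcal{V}(D)$ is crystalline in the sense of \cite{pham2023prismaticfcrystalsanalyticcrystalline}, which is what ``$E$-crystalline'' means here.

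The step I expect to be the main obstacle is the passage from lattices to filtrations in the essential-surjectivity argument --- showing that the de Rham datum $\widehat{\mathcal{E}_{\infty_\sigma}}$ of a crystalline bundle is already ``defined over $E$'' and recovers a genuine filtration on the $\varphi_q$-module $D$ --- together with, as the deepest external input, the $E$-linear ``weakly admissible implies admissible'' theorem used for (3).
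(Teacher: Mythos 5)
Your proposal is correct and follows essentially the same route as the paper: commutativity plus the reduction of (2) to Theorem~\ref{thm:equivalencecris} (via Theorem~\ref{thm:cris} and Remark~\ref{rem:cris}), of (3) to (2) together with Theorem~\ref{thm:bundequiv} and the (Colmez--Fontaine/Kisin) ``weakly admissible $\Leftrightarrow$ admissible'' input, and of (4) to the comparison with the $E$-crystalline notion of Pham. The only difference is that you spell out details the paper leaves implicit, notably the correspondence between $G_E$-stable $\Bdr^+$-lattices in $\Bdr\otimes_{E,\sigma}D_0$ and filtrations, which is exactly what is needed to upgrade Theorem~\ref{thm:equivalencecris} (a statement about $\mathcal{E}(U)$ only) to the full equivalence $\mathcal{V}(-)$, and the Beauville--Laszlo gluing argument behind (4).
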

	\begin{proof}
		The first part is ``weakly admissible implies admissible'' (resp. its analytic analogue from \cite{Kisin2009}).
		The second point is Theorem \ref{thm:equivalencecris}.
		The third point follows by combining \it{(ii)} and the equivalence Theorem \ref{thm:bundequiv}. For the last point note that the essential image of the bottom left map consists precisely of those objects such that the filtration is trivial at $\sigma \neq \id$ and compare with \cite{pham2023prismaticfcrystalsanalyticcrystalline}.
	\end{proof}
	\newpage

	\section{$\fB_e$-tuples}
	The goal of this section is to introduce yet another category equivalent to the category of vector bundles on $X_E$ for $E\neq \QQ_p.$ A category which we call $\fB_e$-tuples. The geometric intuition behind the construction is the following: 
	If $E=\QQ_p$ then complement of $$ \operatorname{Spec}(\fB_e)\to X_{\CC_p^\flat,\QQ_p}$$
	consists of a single point $x_\infty$ corresponding to $t_{cyc} \in (B_{\QQ_p}^+)^{\varphi=p}.$\\
	
	In general we have $[E:\QQ_p]$-points lying above $\infty \in X_{\CC_p^\flat,\QQ_p}$ with respect to $X_{\CC_p^\flat,E} \xrightarrow{f} X_{\CC_p^\flat,\QQ_p}.$
	We have already seen that, by looking at the section on the complement of the entire fibre  $f^{-1}(\infty)$ and keeping track of the (completed) stalks at all points, we get back the notion of $E$-$B$-pairs. 
	However, as soon as $[E:\QQ_p]>1,$ we can instead work with an open covering of $X_{\CC_p^\flat,E}$ by different punctured curves, which allows us to ``drop'' the $\Bdr^+$-part and instead work with multiple ``$\fB_e$-parts''.

	\begin{defn} Let $S =\{\infty_\sigma \mid \sigma \in \Sigma_E\}.$ 
		For $\emptyset \neq T \subseteq S$ let as before $B_{e,T}:= H^0(X_E \setminus T,\mathcal{O}_X) = B^+_E[\prod_{x \in T}\frac{1}{t_x}]^{\varphi_E=1}.$ Which, is the localisation of a PID hence itself a PID. 
		We say a family $ \mathfrak{L}$ of subsets of $S$ is a \textbf{co-covering}, if $\emptyset,S \notin \mathfrak{L}$ and for every $s \in S$ there exists some $T \in \mathfrak{L}$ such that $s \notin T.$
		A \textbf{$\fB_e$-tuple (indexed by $\mathfrak{L}$)} is a family $(M_T)_{T \in \mathfrak{L}}$ of free $B_{e,T}$-modules, together with isomorphisms
		$$ B_{e,T_1\cup T_2}  \otimes_{B_{e,T_1}} M_{T_1} \cong  B_{e,T_1\cup T_2}  \otimes_{B_{e,T_2}} M_{T_2}$$ for any pair $T_1,T_2 \in \mathfrak{L}$ satisfying the obvious cocycle condition whenever $(T_1 \cup T_2) \subseteq T_3  \in \mathfrak{L}.$
		A $G_K$-$\fB_e$-tuple is a $\fB_e$-tuple such that each $M_T$ carries a continuous semi-linear $G_K$-action. 
		We denote by $\mathrm{rank}(M)$ the $B_{e,T}$-rank of some (any) $M_T$ and call it the \textbf{rank of M}.
	\end{defn}
	\begin{rem}
		If $E=\QQ_p,$ then there exists no co-covering because $\abs{S}=1.$ 
	\end{rem}
	
	\begin{thm}
		\label{thm:equivalencetuples}
		Let $\mathfrak{L}$ be a co-covering of $S,$ then the functor 
		$$\operatorname{Bun}_{X_E} \to \{B_e\text{-tuples indexed by } \mathfrak{L}\}$$
		sending $\mathcal{F}$ to $(\mathcal{F}(X\setminus T)_T)$ is an equivalence of categories. 
		If $K/E$ is a finite extension then the same holds for the $G_K$-equivariant version. 
	\end{thm}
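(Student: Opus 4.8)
The plan is to deduce Theorem \ref{thm:equivalencetuples} from the gluing result Proposition \ref{prop:beauvillelazlo} by a descent/patching argument along the open cover $\{U_T := X_E \setminus T\}_{T \in \mathfrak{L}}$. The key observation is that a co-covering $\mathfrak{L}$ is precisely an open cover of $X_E$ in the Zariski topology (this is what $\emptyset,S\notin\mathfrak{L}$ together with the covering condition encodes), so by the standard descent of quasi-coherent sheaves along Zariski covers, $\operatorname{Bun}_{X_E}$ is equivalent to the category of families $(\mathcal{F}_T)_{T}$ of vector bundles $\mathcal{F}_T$ on $U_T$ together with gluing isomorphisms over the pairwise intersections $U_{T_1}\cap U_{T_2} = U_{T_1\cup T_2} = X_E\setminus(T_1\cup T_2)$ satisfying the cocycle condition on triple intersections. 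Then one uses that, by Theorem \ref{thm:Xprop}(3) and Proposition \ref{prop:BVLE}(1), each $U_T$ is \emph{affine}, equal to $\operatorname{Spec}(B_{e,T})$ with $B_{e,T}$ a PID, so $\operatorname{Bun}_{U_T}$ is equivalent to the category of finite free $B_{e,T}$-modules, and $U_{T_1\cup T_2} = \operatorname{Spec}(B_{e,T_1\cup T_2})$ with $B_{e,T_1}, B_{e,T_2} \to B_{e,T_1\cup T_2}$ the localisation maps, so the gluing data becomes exactly the isomorphisms $B_{e,T_1\cup T_2}\otimes_{B_{e,T_1}}M_{T_1} \cong B_{e,T_1\cup T_2}\otimes_{B_{e,T_2}}M_{T_2}$ from the definition of a $\fB_e$-tuple. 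This matches the two categories.

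In more detail, the steps I would carry out are: (1) Check that $\mathfrak{L}$ being a co-covering is equivalent to $\{U_T\}_{T\in\mathfrak{L}}$ being a Zariski open cover of $X_E$; this is immediate since $x\notin U_T \iff x\in T$, so $\bigcup_T U_T = X_E$ iff for every $x\in S$ (the only points that can be missing, as $U_T\supseteq X_E\setminus S$ whenever $T\subseteq S$) there is a $T$ with $x\notin T$. (2) Invoke Zariski descent for vector bundles: the category $\operatorname{Bun}_{X_E}$ is equivalent to the $2$-limit of the diagram of categories $\operatorname{Bun}_{U_T}$, $\operatorname{Bun}_{U_{T_1}\cap U_{T_2}}$, $\operatorname{Bun}_{U_{T_1}\cap U_{T_2}\cap U_{T_3}}$ with restriction functors; concretely, an object is a family $(\mathcal{F}_T)$ with isomorphisms $\varphi_{T_1,T_2}\colon \mathcal{F}_{T_1}|_{U_{T_1}\cap U_{T_2}}\cong \mathcal{F}_{T_2}|_{U_{T_1}\cap U_{T_2}}$ satisfying the cocycle condition on triple overlaps. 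Here one must observe that $U_{T_1}\cap U_{T_2} = X_E\setminus(T_1\cup T_2)$, which is $U_{T_1\cup T_2}$ when $T_1\cup T_2\in\mathfrak{L}$ and in general is still affine (it equals $D_+(\prod_{x\in T_1\cup T_2}t_x)$ by Theorem \ref{thm:Xprop}), and that the cocycle condition on $U_{T_1}\cap U_{T_2}\cap U_{T_3} = X_E\setminus(T_1\cup T_2\cup T_3)$ is well-posed — the role of the hypothesis ``whenever $T_1\cup T_2\subseteq T_3$'' in the definition is that this is the only case where the definition's isomorphism between $M_{T_1}$ and $M_{T_2}$ base-changed to $B_{e,T_3}$ is literally recorded; but since $B_{e,T_1\cup T_2}$ injects into $B_{e, T_1\cup T_2 \cup T_3}$ (localisation of a domain), the general cocycle identity over $B_{e,T_1\cup T_2\cup T_3}$ follows formally from the three pairwise ones, so the two notions of ``cocycle condition'' agree. (3) Translate via $\operatorname{Bun}_{U_T}\cong \operatorname{fMod}_{B_{e,T}}$ (Proposition \ref{prop:BVLE}(2) applied to the affine curve minus a point, or directly: vector bundles on the spectrum of a PID are free modules) and the fact that restriction to $U_{T_1}\cap U_{T_2}$ corresponds to $-\otimes_{B_{e,T_i}}B_{e,T_1\cup T_2}$. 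This identifies the $2$-limit with the category of $\fB_e$-tuples indexed by $\mathfrak{L}$. (4) For the $G_K$-equivariant version: every $U_T$ is $G_K$-stable (since $S = p_E^{-1}(\infty)$ is $G_K$-stable — $\infty\in X_{\QQ_p}$ is $G_{\QQ_p}$-fixed — and $G_K$ permutes the points $\infty_\sigma$, but actually fixes each of them when $K\supseteq E$, by Proposition \ref{prop:elementetinftysigma}(2): $\infty_\sigma$ is the point of a character, hence $G_E$-stable), so the whole descent diagram carries a compatible $G_K$-action, and taking $G_K$-equivariant objects on both sides is exact/commutes with the $2$-limit; continuity is preserved because restriction and base change along the (topological) ring maps are continuous. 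Hence the functor $\mathcal{F}\mapsto (\mathcal{F}(U_T))_T$ is an equivalence equivariantly.

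A cleaner alternative I would actually prefer to write: induct on $\abs{\mathfrak{L}}$ using the fibre-product description from the proof of Proposition \ref{prop:beauvillelazlo}. Choose $T_0\in\mathfrak{L}$ and set $\mathfrak{L}' := \mathfrak{L}\setminus\{T_0\}$. One checks $\mathfrak{L}'$ is a co-covering of $S' := S\cap(\bigcap_{T\in\mathfrak{L}'}?) $ — more precisely, $\{U_T : T\in\mathfrak{L}'\}$ covers the open subscheme $X_E\setminus S_0$ where $S_0 := \bigcap_{T\in\mathfrak{L}'}T$ (the points forced to be removed), and $U_{T_0}$ together with these covers all of $X_E$; since $U_{T_0}$ and the $U_T$ ($T\in\mathfrak{L}'$) are affine with affine overlaps, one glues along $U_{T_0}\cap (X_E\setminus S_0)$. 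This is the exact analogue of the Beauville--Laszlo step and lets one reuse Proposition \ref{prop:beauvillelazlo} essentially verbatim. The main obstacle — and the step I would be most careful with — is the bookkeeping in step (2)/(3): matching up the cocycle conditions, in particular justifying that the definition's restricted cocycle condition (only imposed when $T_1\cup T_2\subseteq T_3\in\mathfrak{L}$) suffices to recover descent data along \emph{all} triple overlaps $X_E\setminus(T_1\cup T_2\cup T_3)$ even when $T_1\cup T_2\cup T_3\notin\mathfrak{L}$ and no single member of $\mathfrak{L}$ contains it; here one uses that all the transition rings $B_{e,T}$ are domains with a common fraction field $K_{X_E}$ and the relevant base-change maps are injective, so the cocycle identity can be checked after the injection into $B_{e,T_1\cup T_2\cup T_3}$ (or even into $K_{X_E}$), where it is a formal consequence of the pairwise isomorphisms. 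Everything else is routine descent plus the already-established affineness and PID statements.
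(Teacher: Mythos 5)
Your main route is the same as the paper's: the proof there is precisely the observation that a co-covering yields a Zariski cover of $X_E$ by the affine opens $U_T=\operatorname{Spec}(B_{e,T})$, that bundles on $U_T$ are free $B_{e,T}$-modules because $B_{e,T}$ is a PID, and that the tuple data is gluing data for a locally free sheaf; your steps (1)--(4), including the equivariant case (the points $\infty_\sigma$ are indeed $G_E$-fixed by Proposition \ref{prop:elementetinftysigma}), spell out exactly this argument, and your closing induction sketch via Proposition \ref{prop:beauvillelazlo} is an unnecessary (and only half-formulated) variant.

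However, the one step you flag as delicate contains a genuine error: it is false that ``the general cocycle identity over $B_{e,T_1\cup T_2\cup T_3}$ follows formally from the three pairwise'' isomorphisms. Pairwise isomorphisms never imply the cocycle identity, and injectivity of the base-change maps is beside the point: the identity $u_{T_1T_3}=u_{T_2T_3}\circ u_{T_1T_2}$ is already an identity of maps over the larger ring $B_{e,T_1\cup T_2\cup T_3}$, and passing to an even larger domain such as $K_{X_E}$ only lets you \emph{check} it there, it does not make it hold. Concretely, for $[E:\QQ_p]\geq 3$ take $\mathfrak{L}=\{\{a\},\{b\},\{c\}\}$ consisting of three singletons of $S$, set $M_T=B_{e,T}$ of rank one, and take transition isomorphisms given by the units $1,1,\lambda$ with $\lambda\in E^\times\setminus\{1\}$: all pairwise compatibilities hold, no condition of the form $T_1\cup T_2\subseteq T_3\in\mathfrak{L}$ is available, yet an isomorphism to data arising from a bundle would require units $f_a,f_b,f_c$ with $f_c\lambda f_a^{-1}=(f_cf_b^{-1})(f_bf_a^{-1})$, forcing $\lambda=1$. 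So the cocycle condition on \emph{all} triple overlaps is genuinely part of what a $\fB_e$-tuple must satisfy (this is what the functor $\mathcal{F}\mapsto(\mathcal{F}(X\setminus T))_T$ produces and what gluing requires); the ``obvious cocycle condition'' in the definition has to be read in this full form, and your attempted derivation of it from the pairwise data should be deleted rather than repaired, since with the weaker reading essential surjectivity would actually fail. With that correction the rest of your argument is correct and coincides with the paper's proof.
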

	\begin{proof}
		The inverse functor is given as follows: 
		Let $(M_T)_T$ be a $\fB_e$-tuple, then by definition each $M_T$ is a free $\mathcal{O}_X(X\setminus T)$-module. 
		The fact that $\mathfrak{L}$ is a co-covering, means that 
		$X_E\setminus T$ is an open affine subscheme and $(X_E\setminus T)_{T\in \mathfrak{L}}$ is a covering of $X_E.$ The isomorphisms 
		$$ B_{e,T\cup T'}  \otimes_{B_{e,T}} M_T \cong  B_{e,T\cup T'}  \otimes_{B_{e,T'}} M_{T'}$$ for any pair $T,T' \in \mathfrak{L}$ 
		translate to the fact that $X_E\setminus T \mapsto M_T$ defines a sheaf on $X_E,$ which is  locally free. 
	\end{proof}
	The smallest co-covering $\mathfrak{L}$ is given by two non-empty one-point sets $T_1,T_2$, whose intersection is empty.
	A more canonical choice is $\mathfrak{L}=\{  \{\infty_\sigma \mid \sigma \in \Sigma_E\}\}.$
	In this case $B_{e,\{\infty_\sigma\}} = B_E^+[1/t_{\infty_\sigma}]^{\varphi_E=1}.$
	\begin{rem}
		Let $\mathfrak{L}$ be a cocovering. Then 
		$$\bigcap_{T \in \mathfrak{L}} B_{e,T}=E,$$
		where the intersection is taken in $B_{e,\bigcup_{T\in \mathfrak{L}}T}.$
		In particular 
		$$B_E^+[1/t_{\infty_\sigma}]^{\varphi_E=1} \cap B_E^+[1/t_{\infty_\tau}]^{\varphi_E=1}=E$$ for any pair of embeddings $\sigma\neq \tau.$
		
	\end{rem}
	\begin{proof} Translate the fact that $H^0(X_{E},\mathcal{O}_{X_E})=E$ using the equivalence from Theorem \ref{thm:equivalencetuples}.
	\end{proof}
	\begin{rem}
		Let $(M_T)_{T \in \mathfrak{L}}$ be a $\fB_e$-tuple indexed by a co-covering $\mathfrak{L}$ let $\mathcal{V}$ be the corresponding vector bundle.
		\begin{enumerate}
			\item $\mathcal{V}$ is semi-stable of slope zero if and only if $\cap_{T \in \mathfrak{L}} M_T$ is a $\operatorname{rank}(M)$-dimensional $E$-vector space.
			\item All slopes of $\mathcal{V}$ are non-positive if and only if $\cap_{T \in \mathfrak{L}} M_T$ is finite dimensional.
		\end{enumerate}
	\end{rem}
	\begin{proof}
		This is a translation of \cite[Remarque 4.6]{farguesfontainecourbe} using that  $\cap_T M_T = H^0(X_E,\mathcal{V}).$
	\end{proof}
	\begin{rem} Fix an embedding $\sigma.$ Then for any $\tau \neq \sigma$ the image of the natural map
		$$B_E^+[1/t_{\infty_{\tau}}]^{\varphi_E=1} \xrightarrow{\iota_\sigma}\Bdr^+$$ induced by 
		$$E \otimes_{E_0} B^+_{\QQ_p} \xrightarrow{\sigma \otimes \varphi_p^{i(\sigma)}}  \Bdr^+$$ 
		is dense. 
	\end{rem}
	\begin{proof}
		Note that $\iota_{\sigma}(t_{\infty_\tau})\notin \operatorname{Fil}^1\Bdr^+,$ which makes the map well-defined.
		For every $x \in (B_E^+)^{\varphi_E=\pi^d}$ we have $\frac{x}{t_{\infty_\tau}} \in (B_E^+[1/t_{\infty_\tau}])^{\varphi_E=1}$ and $\iota_\sigma(\frac{x}{t_{\infty_\tau}})$ is invertible.
		We conclude that the image of $\iota_\sigma$ in $\Bdr^+/\operatorname{Fil}^i\Bdr^+$ contains $\iota_\sigma(t_{\infty_\tau})^{-1}(\operatorname{Im}((B_E^+)^{\varphi_E=\pi^d} \to \Bdr^+/\operatorname{Fil}^i\Bdr^+)).$  Since $\iota_\sigma(t_{\infty_\tau})$ is a unit, we obtain from the fundamental exact sequeunce that $\iota_\sigma$ is surjective modulo each $\operatorname{Fil}^i$ and hence that $\iota_\sigma$ has dense image. 
	\end{proof}
	\begin{defn}
		Let $\mathfrak{L}\subset S$ be a co-covering. For $T \subseteq X_E$ let $U_T:=X_{E} \setminus T$
		We denote by $C^\bullet(\mathfrak{L},\mathcal{O}_{X_E})$ be the Čech complex for the covering 
		$(U_T)_{T \in \mathfrak{L}},$ i.e.,
		$$\prod_{T \in \mathfrak{L}} \mathcal{O}_{X_E}(U_T) \to \prod_{T_1,T_2 \in \mathfrak{L}} \mathcal{O}_{X_E}(U_{T_1 \cup T_2}) \to \dots  .$$
	\end{defn}
	Note that by construction we have 	 $C^\bullet(\mathfrak{L},\mathcal{O}_{X_E}) = \mathbf R\Gamma(X_E,\mathcal{O}_{X_E}).$ In particular 
	$C^\bullet(\mathfrak{L},\mathcal{O}_{X_E}) \simeq E[0].$

	In order to extract Galois cohomology of representations (resp. $B$-pairs) in terms of $\fB_e$-tuples, we would like to show that the differentials are strict. The issue is, that the complex consists of LB spaces rather than Fréchet spaces and closed subspaces of LB-spaces (with the subspace topology) are not necessarily LB-spaces themselves. Instead we will use a result from the theory of almost $\mathbb{C}_p$-representations (cf. \cite{Fontaine2020}). Recall that a Banach representation $W$ of $G_K$ is called almost $\CC_p$-representation, if there exist Galois representation $V_1,V_2,$ $d \in \NN_0$ and embeddings $V_1 \to W, V_2 \to \CC_p^d$ such that 
	$W/V_1 \cong \CC_p^d/V_2.$ We denote by $\mathcal{C}(G_K)$ the category of almost $\CC_p$-representations. 
	\begin{defn}
		Let $W$ be a locally convex $E$-vector space. An admissible filtration on $W$ is a a filtration $F^nW$ indexed by $n\in \ZZ$ such that 
		\begin{enumerate}
			\item $\bigcup_{n \in \ZZ} F^nW = W$ and $\bigcap_{n\in \ZZ}W=0.$
			\item $F^nW/F^{n+r}W$ with the induced topology is a Banach space for every $n \in \ZZ$ and $r \in \NN_0.$
			\item $F^mW \cong \varprojlim_{r \geq 0} F^mW/F^{m+r}W$ as topological vector spaces.
			\item a $o_E$-submodule $U\subseteq W$ is open if and only if $U\cap F^nW$ is open in $W$ for all $n \in \ZZ.$
		\end{enumerate}
		In particular, $W$ is a strict LF space.
		We denote by $\widehat{\mathcal{C}}(G_K)$ the category of strict $\QQ_p$-LF spaces with continuous action of $G_K,$ admitting a $G_K$-equivariant admissible filtration such that $F^mW/F^{m+r}W \in \mathcal{C}(G_K)$ for every $m\in \ZZ,r \in \NN_0,$ with morphism continuous $G_K$-equivariant maps. 
	\end{defn}
	\begin{prop}  \label{prop:fontainecp} The category $\widehat{\mathcal{C}}(G_K)$ is abelian. 
		Any morphism $f \colon W_1 \to W_2$ in $\widehat{\mathcal{C}}(G_K)$ is strict. A sequence in $\widehat{\mathcal{C}}(G_K)$ is exact if and only if it is exact as a sequence of $\QQ_p$-vector spaces. The category $\mathcal{C}(G_K)$ is a Serre subcategory. 
	\end{prop}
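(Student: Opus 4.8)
The plan is to import the structure theory for $\widehat{\mathcal{C}}(G_K)$ from Fontaine's work on almost $\CC_p$-representations (cf. \cite{Fontaine2020}) and to bootstrap from the known statement for $\mathcal{C}(G_K)$, which is already known to be abelian with strict morphisms. First I would recall the key input: $\mathcal{C}(G_K)$ is an abelian category, every morphism is strict, and a short sequence in $\mathcal{C}(G_K)$ is exact precisely when it is exact as a sequence of $\QQ_p$-vector spaces. This handles the ``associated graded'' pieces $F^mW/F^{m+r}W$; the task is to propagate these properties to the filtered objects.

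Next I would analyse a morphism $f\colon W_1 \to W_2$ in $\widehat{\mathcal{C}}(G_K)$. The admissible filtrations are not a priori preserved by $f$, so the first technical point is to produce \emph{compatible} admissible filtrations: given $G_K$-equivariant admissible filtrations $F^\bullet W_1$, $F^\bullet W_2$, one shifts and intersects — replace $F^n W_1$ by $F^n W_1 \cap f^{-1}(F^{n'} W_2)$ for a suitable shift, using that $f$ is continuous so preimages of the defining Banach pieces are closed. One checks this new filtration is still admissible (conditions (1)--(4) of the definition are stable under such operations because the quotients remain in $\mathcal{C}(G_K)$, being subquotients of objects of $\mathcal{C}(G_K)$, and $\mathcal{C}(G_K)$ is a Serre subcategory of itself). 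Once $f$ is filtered, pass to the associated graded: $\mathrm{gr}(f)$ is a morphism in (pro-)$\mathcal{C}(G_K)$, hence strict degreewise, with kernel and cokernel in $\mathcal{C}(G_K)$. Then $\ker f = \varprojlim_r \ker(F^m W_1/F^{m+r}W_1 \to \cdots)$ and $\mathrm{coker}\, f$ likewise acquire admissible filtrations with graded pieces in $\mathcal{C}(G_K)$, so they lie in $\widehat{\mathcal{C}}(G_K)$; this gives the kernels and cokernels needed for abelianness. Strictness of $f$ follows because the subspace/quotient topologies on $\mathrm{im}(f)$ agree: one reduces modulo each step of the filtration to the (already known) strictness in $\mathcal{C}(G_K)$ and then uses condition (4), which says a subspace is open iff its intersection with each $F^n$ is open, to lift the identification of topologies from the graded pieces to the filtered objects.

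For the exactness criterion, I would argue that a complex $0 \to W_1 \to W_2 \to W_3 \to 0$ in $\widehat{\mathcal{C}}(G_K)$ is exact iff it is exact as abstract $\QQ_p$-vector spaces: the ``only if'' is trivial; for ``if'', choose compatible admissible filtrations as above, so that the sequence of graded pieces $0 \to \mathrm{gr}\,W_1 \to \mathrm{gr}\,W_2 \to \mathrm{gr}\,W_3 \to 0$ is a complex in $\mathcal{C}(G_K)$; by the algebraic exactness assumption and a diagram chase with the filtration (or a $\varprojlim$--$\varprojlim^1$ argument, noting the graded pieces are Banach so the relevant Mittag-Leffler condition holds), the graded sequence is exact, hence exact in $\mathcal{C}(G_K)$, and then condition (3) (completeness, $W = \varprojlim W/F^{m+r}W$) upgrades this to topological exactness. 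Finally, $\mathcal{C}(G_K) \subset \widehat{\mathcal{C}}(G_K)$ is a Serre subcategory: it is closed under subobjects, quotients and extensions inside $\widehat{\mathcal{C}}(G_K)$ because an object of $\widehat{\mathcal{C}}(G_K)$ with trivial filtration jumps (i.e.\ $F^mW = W$, $F^{m+1}W = 0$ for one $m$) is exactly an object of $\mathcal{C}(G_K)$, and the kernel/cokernel/extension constructions above visibly preserve this property.

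The main obstacle I anticipate is the bookkeeping around making filtrations compatible and verifying that the four axioms of an admissible filtration survive the operations $W \mapsto \ker f$, $\mathrm{coker}\, f$, and filtration intersection/shift — in particular axiom (4), the ``open iff sectionwise open'' condition, which is what actually encodes strictness and is the least formal of the four. Everything else is either a direct appeal to the theory of almost $\CC_p$-representations or a standard inverse-limit argument over the Banach quotients $F^mW/F^{m+r}W$.
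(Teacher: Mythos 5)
The paper does not actually prove this proposition: its proof is a one-line citation to \cite[Proposition 2.12]{Fontaine2020}, so your attempt has to be measured against Fontaine's argument rather than anything in the text. Your outline is in the right spirit (bootstrap abelianness, strictness and the exactness criterion from the corresponding facts for $\mathcal{C}(G_K)$ via the graded pieces of admissible filtrations), but it has a genuine gap exactly at the step that makes the bootstrap run: producing filtrations compatible with a morphism $f\colon W_1\to W_2$. You propose to replace $F^nW_1$ by $F^nW_1\cap f^{-1}(F^{n'}W_2)$ and assert that the new graded pieces lie in $\mathcal{C}(G_K)$ ``being subquotients of objects of $\mathcal{C}(G_K)$''. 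That is not a property of $\mathcal{C}(G_K)$: it is stable under kernels, images, cokernels of its \emph{own} morphisms and under extensions, but an arbitrary closed $G_K$-stable subspace or topological subquotient of an almost $\CC_p$-representation need not be one. Worse, $F^nW_1\cap f^{-1}(F^{n'}W_2)$ is the kernel of a continuous equivariant map from $F^nW_1$ into $W_2/F^{n'}W_2$, and the target is not an object of $\mathcal{C}(G_K)$ at all, so the abelian-category properties of $\mathcal{C}(G_K)$ give you nothing here; the aside that ``$\mathcal{C}(G_K)$ is a Serre subcategory of itself'' is vacuous and does no work.

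The missing idea that closes this (and makes the intersection trick unnecessary) is a Baire-category/strict-LF argument: by condition (3) each $F^nW_1$ is a Fr\'echet space, hence Baire, and $W_2=\bigcup_m F^mW_2$ with each $F^mW_2$ closed; since $f$ is continuous, the closed subspaces $F^nW_1\cap f^{-1}(F^mW_2)$ exhaust $F^nW_1$, so one of them has nonempty interior and therefore equals $F^nW_1$. Thus $f$ maps $F^nW_1$ into some $F^{m(n)}W_2$, the graded maps are honest morphisms in $\mathcal{C}(G_K)$, and only then can kernels and cokernels be assembled degreewise, with a Mittag--Leffler argument over the Banach quotients and an actual verification of axiom (4) for the induced filtrations --- the point you yourself flag as the main obstacle but leave unaddressed. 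As written, the central mechanism of your sketch (intersected filtrations whose graded pieces are claimed to stay in $\mathcal{C}(G_K)$) would fail, so the proposal is an outline with a real gap rather than a proof; the full details are exactly what the cited result of Fontaine supplies.
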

	\begin{proof}
		This is \cite[Proposition 2.12]{Fontaine2020}.
	\end{proof}
	\begin{rem}
		The terms of the complex $C^\bullet(\mathfrak{L},\mathcal{O}_{X_E})$ with their natural topologies are objects of $\widehat{\mathcal{C}}(G_K).$ The differentials are strict.
	\end{rem}
	\begin{proof}
		
		The terms are finite products of spaces of the form
		$$B_E^+\left[\frac{1}{\prod_{s\in T} t_s}\right]^{\varphi=1} = \varinjlim_n \frac{1}{\prod_{s\in T} t_s}(B_E^+)^{\varphi=\pi^{\abs T}}.$$
		Hence they can be written as inductive limits of Banach spaces along closed immersions. More precisely, we can define an admissible $G_K$-equivariant filtration by $F^{-m}:= \bigcup_{n=0}^{m} \frac{1}{\prod_{s\in T} t_s}(B_E^+)^{\varphi=\pi^{\abs T}}.$ Using Proposition \ref{prop:fontainecp} and the fact that $\frac{1}{\prod_{s\in T} t_s}(B_E^+)^{\varphi=\pi^{\abs T}}$ belongs to $\mathcal{C}(G_K)$ we see that the graded pieces of the filtration belong to $\mathcal{C}(G_K)$
		The differentials are continuous and $G_K$-equivariant, hence using again Proposition \ref{prop:fontainecp} strict.
	\end{proof}

	\begin{thm}\label{thm:cohomology}
		Let $E\neq \QQ_p,$ let $\mathfrak{L}$ be a co-covering. Let $W$ be an $E$-$G_K$-$B$-pair. Let $\mathcal{V}$ be its corresponding vector bundle then
		$$
		[W_e \oplus W_{\mathrm{dR}}^+ \to W_{\mathrm{dR}}] 
		\simeq C^\bullet(\mathfrak{L},\mathcal{V}) 
		$$
		are strictly and $G_K$-equivariantly quasi-isomorphic.
		
	\end{thm}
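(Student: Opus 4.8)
The plan is to compare the two complexes $[W_e \oplus W_{\mathrm{dR}}^+ \to W_{\mathrm{dR}}]$ and $C^\bullet(\mathfrak{L},\mathcal{V})$ by realising both as representatives of $\mathbf{R}\Gamma_{\mathrm{sheaf}}(X_E,\mathcal{V})$ in the equivariant (i.e.\ $G_K$-semilinear) setting, and then upgrading the abstract quasi-isomorphism to a strict, $G_K$-equivariant one using Proposition \ref{prop:fontainecp}. First I would note that by Theorem \ref{thm:equivalencetuples} (or rather Theorem \ref{thm:equivalencetuples1}) the covering $(U_T)_{T\in\mathfrak{L}}$ computes the sheaf cohomology of any quasi-coherent sheaf on $X_E$, since each $U_T$ is affine and finite intersections $U_{T_1\cup\cdots\cup T_k}=U_{T_1\cup\cdots\cup T_k}$ are again of the standard affine form $X_E\setminus(T_1\cup\cdots\cup T_k)$; hence $C^\bullet(\mathfrak{L},\mathcal{V})$ is a model for $\mathbf{R}\Gamma(X_E,\mathcal{V})$. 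On the other side, Proposition \ref{prop:FFEquivalence}/Proposition \ref{prop:bundletoBpair} together with the Beauville--Laszlo glueing (Proposition \ref{prop:BVLE}) realise $[W_e\oplus W_{\mathrm{dR}}^+\to W_{\mathrm{dR}}]$ as the Čech complex of $\mathcal{V}$ for the two-element ``covering'' $\{U_S,\widehat{\mathcal{O}}_{X_E,S}\text{-formal disc}\}$ in the Beauville--Laszlo sense; this is exactly the identification of $C^\bullet(W)$ with $\mathbf{R}\Gamma(X_E,\mathcal{V})$ used already in the case $E=\QQ_p$. So both complexes compute the same object of the derived category of $G_K$-modules.

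To get an actual map of complexes rather than a roof, I would use a common refinement: take the covering of $X_E$ by the $U_T$ for $T\in\mathfrak{L}$ \emph{together with} the formal punctured discs at each $s\in S$ (equivalently, refine the Beauville--Laszlo datum by the co-covering). There is a natural augmentation/restriction map from the Beauville--Laszlo complex $[W_e\oplus W_{\mathrm{dR}}^+\to W_{\mathrm{dR}}]$ to the Čech complex of this refined covering, and a natural map from $C^\bullet(\mathfrak{L},\mathcal{V})$ to the same refined complex; both are quasi-isomorphisms because refining an (affine/BL) covering does not change cohomology. This produces the desired zig-zag through an explicit third complex, all of whose terms, by the argument in the Remark preceding Theorem \ref{thm:cohomology}, together with the terms $W_e$, $W_{\mathrm{dR}}^+$, $W_{\mathrm{dR}}$ (which are finite free over $\fB_e\otimes E$, $\Bdr^+$, $\Bdr$ respectively and hence lie in $\widehat{\mathcal{C}}(G_K)$ after forgetting coefficients to $\QQ_p$), are objects of $\widehat{\mathcal{C}}(G_K)$: the $\fB_e$-type terms are countable increasing unions of $\mathcal{C}(G_K)$-objects along closed immersions, and $\Bdr^{(+)}$ carries its $\ker\theta$-adic admissible filtration with graded pieces finite-dimensional $\CC_p$-representations.

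With that in place, the final step is formal: every term of every complex in the zig-zag lies in $\widehat{\mathcal{C}}(G_K)$, every differential and every comparison map is continuous and $G_K$-equivariant, hence strict by Proposition \ref{prop:fontainecp}; moreover a complex in $\widehat{\mathcal{C}}(G_K)$ is exact iff it is exact as $\QQ_p$-vector spaces, so the comparison maps, being quasi-isomorphisms of underlying $\QQ_p$-complexes, are quasi-isomorphisms in $\widehat{\mathcal{C}}(G_K)$; since $\widehat{\mathcal{C}}(G_K)$ is abelian, a quasi-isomorphism between complexes of strict maps in an abelian category of topological vector spaces can be inverted up to homotopy by strict maps, giving a strict $G_K$-equivariant quasi-isomorphism $[W_e\oplus W_{\mathrm{dR}}^+\to W_{\mathrm{dR}}]\simeq C^\bullet(\mathfrak{L},\mathcal{V})$. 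The main obstacle I anticipate is not the cohomological bookkeeping but verifying cleanly that \emph{all} the intermediate terms — in particular the mixed terms involving both an $\fB_e$-localisation and a $\Bdr$-completion that appear when one refines the Beauville--Laszlo covering by the co-covering — genuinely lie in $\widehat{\mathcal{C}}(G_K)$ with a compatible admissible filtration; this requires checking that the relevant intersections $U_{T}\cap(\text{disc at }s)$ give rings whose natural topology is a strict LF-topology with $\mathcal{C}(G_K)$ graded pieces, which one handles by the same $\varinjlim$-of-Banach argument as in the Remark, keeping track of the $t_s$-filtration on the $\Bdr$-side simultaneously.
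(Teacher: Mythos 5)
Your endgame coincides with the paper's: all terms in sight lie in $\widehat{\mathcal{C}}(G_K)$, so by Proposition \ref{prop:fontainecp} any continuous $G_K$-equivariant map between them is automatically strict and exactness can be tested on underlying $\QQ_p$-vector spaces; hence it suffices to exhibit a continuous, equivariant comparison of complexes. The gap is in how you produce that comparison. Your ``common refinement'' adjoins to the Zariski covering $(U_T)_{T\in\mathfrak{L}}$ the formal (punctured) discs $\operatorname{Spec}(\Bdr^+)$ and $\operatorname{Spec}(\Bdr)$ at the points of $S$; this is not an open covering of $X_E$ in any topology for which ``refining a covering does not change Čech cohomology'' is a quotable theorem, so the assertion that both maps into the mixed complex are quasi-isomorphisms is exactly the point that needs proof. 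To repair it you would have to define the mixed Čech--Beauville--Laszlo total complex precisely (distinguishing the terms according to whether $s\in T$ or $s\notin T$, which determines whether the ``intersection'' is $\Bdr$ or $\Bdr^+$) and prove by hand, essentially redoing Beauville--Laszlo descent at the level of complexes, that it computes $\mathbf{R}\Gamma(X_E,\mathcal{V})$ and receives quasi-isomorphisms from both $C^\bullet(\mathfrak{L},\mathcal{V})$ and $[W_e\oplus W_{\mathrm{dR}}^+\to W_{\mathrm{dR}}]$; none of this is supplied, and you flag only the (easier) issue of whether the mixed terms lie in $\widehat{\mathcal{C}}(G_K)$. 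The paper avoids the mixed complex altogether: it first reduces, via strict quasi-isomorphisms between Čech complexes of refinements and the passage to the alternating complex (all transition maps being continuous and equivariant, hence strict), to the case $\mathfrak{L}=\{T_1,T_2\}$ with $S=T_1\coprod T_2$, and then writes down a completely explicit map of two-term complexes $C^\bullet_{\mathrm{alt}}(\mathfrak{L},\mathcal{V})\to[W_e\to W_{\mathrm{dR}}/W_{\mathrm{dR}}^+]$ whose kernel and cokernel are the acyclic complexes $H^0(X_E,\mathcal{V})\xrightarrow{\cdot 2}H^0(X_E,\mathcal{V})$ and $H^1(X_E,\mathcal{V})\xrightarrow{\id}H^1(X_E,\mathcal{V})$, read off from the exact columns furnished by the Čech and $B$-pair computations of $H^0$ and $H^1$.

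A secondary but genuine error: your final claim that in an abelian category a quasi-isomorphism between such complexes ``can be inverted up to homotopy by strict maps'' is false in general (quasi-isomorphisms are not homotopy equivalences, and $\widehat{\mathcal{C}}(G_K)$ offers no projectivity to invoke). Fortunately it is also unnecessary: a zig-zag of continuous $G_K$-equivariant, hence strict, quasi-isomorphisms is what the statement requires and is exactly what the paper itself uses when comparing different co-coverings, so you should simply drop that step. Your verification that $W_e$, $W_{\mathrm{dR}}^{(+)}$ and the $B_{e,T}$-terms carry admissible $G_K$-stable filtrations with graded pieces in $\mathcal{C}(G_K)$ is fine and agrees with the Remark preceding the theorem.
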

	\begin{proof}
		As abstract complexes both compute the sheaf cohomology of $\mathcal{V}.$
		Because both complexes have terms in $\widehat{\mathcal{C}}(G_K),$ it suffices to show that there exists a continuous $G_K$-equivariant quasi-isomorphism between the two by Proposition \ref{prop:fontainecp} which asserts that the maps will automatically be strict. 
		First let us remark, that we can assume $\mathfrak{L} = \{T_1,T_2\}$ with $S= T_1 \coprod T_2.$
		Indeed, the covering obtained form $\mathfrak{L}$ can always be refined to the covering $\mathfrak{U}:=(X\setminus (S\setminus \{s\}),s\in S).$ But then any two $C^\bullet(\mathfrak{L},\mathcal{V}),C^\bullet(\mathfrak{L}',\mathcal{V})$ are (strictly) quasi-isomorphic (because transition maps between the two complexes are obviously maps in $\widehat{\mathcal{C}}(G_K)$). By the same reasoning, we can replace the complex $C^\bullet(\mathfrak{L},\mathcal{V}) $ by its alternating version $C^\bullet_{alt}(\mathfrak{L},\mathcal{V}).$ Without loss of generality we can fix the co-covering $\mathfrak{L}$ as above. 
		We identify $S = \{\infty_\sigma, \sigma \in \Sigma_E\}$ with the set of embeddings $\Sigma_E.$
		Let us write $\mathcal{V}_{e,T_i}:=\mathcal{V}(X\setminus T_i)$ and $\mathcal{V}_e:=\mathcal{V}(X\setminus S).$ 
		We write $\iota_{T_i} \colon \mathcal{V}_e \cong W_e \to \prod_{\sigma \in T_i} W_{\mathrm{dR}}$ for the natural map $W_e \to W_{\mathrm{dR}}$ followed by the projection 
		$$W_\mathrm{dR} = \prod_{\sigma \in S} W_{\mathrm{dR},\sigma} \to \prod_{\sigma \in T_i} W_{\mathrm{dR,\sigma}}$$
		By Proposition \ref{prop:bundletoBpair} we can identify 
		$\mathcal{V}_e \cong W_e$ and $W_{\mathrm{dR},\sigma}$ with the completed stalk  $\widehat{\mathcal{V}}_{\infty_\sigma}.$
		Now consider the following diagram
		\[\begin{tikzcd}
			& 0 & 0 \\
			& {H^0(X_{E},\mathcal{V})} & {H^0(X_{E},\mathcal{V})} \\
			{C^\bullet_{alt}(\mathfrak{L},\mathcal{V})\colon} & {\mathcal{V}_{e,T_1} \oplus \mathcal{V}_{e,T_2}} & {\mathcal{V}_{e}} \\
			{C^\bullet(W):} & {W_e} & {W_{\mathrm{dR}}/W^+_{\mathrm{dR}}} \\
			& {H^1(X_E,\mathcal{V})} & {H^1(X_E,\mathcal{V})} \\
			& 0 & 0
			\arrow[from=1-2, to=2-2]
			\arrow[from=1-3, to=2-3]
			\arrow["{\cdot 2}", from=2-2, to=2-3]
			\arrow["{(x,-x)}", from=2-2, to=3-2]
			\arrow["{\operatorname{res}}", from=2-3, to=3-3]
			\arrow["{\iota_1-\iota_2}", from=3-2, to=3-3]
			\arrow["{\iota_1+\iota_2}"', from=3-2, to=4-2]
			\arrow["{\iota_{T_1}-\iota_{T_2}}", from=3-3, to=4-3]
			\arrow["{\iota_S}"', from=4-2, to=4-3]
			\arrow[from=4-2, to=5-2]
			\arrow[from=4-3, to=5-3]
			\arrow["{\operatorname{id}}"', from=5-2, to=5-3]
			\arrow[from=5-2, to=6-2]
			\arrow[from=5-3, to=6-3]
		\end{tikzcd}\]
		Where by abuse of notation we use the same symbols for 
		$$\iota_i \colon \mathcal{V}_{e,T_i} \xrightarrow{\operatorname{res}}\mathcal{V}_{e,S}$$ and the induced maps 
		$$\iota_i \colon \mathcal{V}_{e,T_i} \xrightarrow{\operatorname{res}}\mathcal{V}_{e,S} \cong W_e.$$
		The columns are exact and tracing through the identifications made and using that the image of $\mathcal{V}_{e,T_i}$ in $W_{\mathrm{dR},\sigma}/W_{\mathrm{dR},\sigma}^+$ is zero whenever $\sigma \notin T_i$, we can see that the diagram commutes, i.e., the middle square induces a map of complexes $C^\bullet_{alt}(\mathfrak{L},\mathcal{V}) \to C^\bullet(W)$ with acyclic kernel and co-kernel (given by the top and bottom row of the diagram). With this explicit description, one can check that the maps are continuous and $G_K$-equivariant.
		
	\end{proof}
	
	It follows from Theorem \ref{thm:cohomology} that $\mathbf{R}\Gamma_{cts}(G_K,W)$ can be computed 
	using the complex $C^\bullet(\mathfrak{L},\mathcal{V})$ by taking the total complex of the continuous co-chain double complex, which (due to strictness of differentials) can be viewed as the evaluation at a point of the corresponding condensed group cohomology which can be defined as a derived functor. 
It would be interesting to give a more explicit description  using the abelian category $\mathcal{C}(G_K)$ and the larger category $\widehat{\mathcal{C}}(G_K).$ For an abelian category $\mathcal{A}$ and $X,Y \in \mathbf{D}(\mathcal{A})$ we write $\operatorname{Ext}^i_{\mathcal{A}}(X,Y):=\operatorname{Hom}_{\mathbf{D}(\mathcal{A})}(X[-i],Y).$ We denote by $\mathbf{D}_{C(G_K)}(\widehat{\mathcal{C}}(G_K))$ the full subcategory of $\mathbf{D}(\widehat{\mathcal{C}}(G_K))$ consisting of objects whose cohomology belongs to ${\mathcal{C}}(G_K).$
	Fontaine showed that for objects $Z \in \mathcal{C}(G_K)$ we have 
	$$\operatorname{Ext}^i_{\mathcal{C}(G_K)}(\QQ_p,Z)\cong H^i_{cts}(G_K,Z)$$ (cf. \cite[Proposition 6.7]{Fontaine2003}).
	We do not know whether the natural functor 
	$\mathbf{D}_{C(G_K)}(\mathcal{C}(G_K)) \to \mathbf{D}(\widehat{\mathcal{C}}(G_K))$ is fully faithful. 
Because $\mathcal{C}(G_K)$ is a Serre subcategory, one has
$$\operatorname{Ext}^i_{\mathcal{C}(G_K)}(\QQ_p,Z) = \operatorname{Ext}^i_{\widehat{\mathcal{C}}(G_K)}(\QQ_p,Z)$$ for $i=0,1$ for objects $Z \in \mathcal{C}(G_K).$ Using effaceability (cf. proofs of \cite[Proposition 6.7 and Proposition 6.8]{Fontaine2003}) and the Snake Lemma one can show that the map \begin{equation}\label{eq:ext2}\operatorname{Ext}^2_{\mathcal{C}(G_K)}(\QQ_p,Z) \to \operatorname{Ext}^2_{{\widehat{\mathcal{C}}(G_K)}}(\QQ_p,Z)\end{equation} is injective, but we do not know if it is an isomorphism. Using Harder--Narasimhan filtrations one can write $C^\bullet(W)$ as a sequeunce of distinguished triangles in $ \mathbf{D}(\widehat{\mathcal{C}}(G_K))$ concentrated in a single degree. Hence if \eqref{eq:ext2} were an isomorphism for all objects, then one would also have $H^i(G_K,W) \cong \operatorname{Ext}^i_{\widehat{\mathcal{C}}(G_K)}(\QQ_p,C^\bullet(W)).$

	\subsection{Berger's multivariable theory} To relate to the situation of Berger let us assume $K=E=E_0$ is unramified.
	In \cite{Berger2013} Berger constructs a bijection on objects: 
	$D \mapsto M(D)$ between the category of $\varphi_q$-modules $D$ over $E$ with $[E:\QQ_p]$-filtrations and $(\varphi_q,\Gamma_E^{LT})$-modules over the Robba ring $\mathcal{R}(\underline{Y}) = \mathcal{R}_{E}(Y_1,\dots,Y_f)$ with in $[E:\QQ_p]$-variables with coefficients in $E.$ It follows from the construction (\cite[Theorem 5.2]{Berger2013}) that $M(D)$ is the base change of a (reflexive, coadmissible) $\cR^+_E(\underline{Y})$-module, which should be seen as a multivariable analogue of the Wach module of a crystalline $(\varphi,\Gamma)$-module. 
	\begin{rem}
		If $E/\QQ_p$ is unramified then for the uniformiser $\pi_E=p$ the elements $t_{\sigma}$ and $t_{\infty_\sigma}$ agree and if $\sigma = \varphi_p^i,$ then 
		$t_{\sigma} = \varphi_p^i(t_{E}).$
	\end{rem}
	By sending $Y_i$ to $\varphi_p^i(u)$ (with $u$ as before in Section \ref{subsec:elementst}) one gets an embedding (cf.\textit{(loc.cit)}) $\cR^+_E(\underline{Y}) \to  B^+_{\QQ_p} \footnote{In the notation of {\textit{(loc.cit.)}} $B_{\QQ_p}^+$ is called $\widetilde{\fB}^+_\text{rig}.$}.$ This can be extended to 
	$$\cR_E(\underline{Y}) \to \tilde{\fB}^{\dagger}_{\text{rig}}$$
	\begin{defn} Let $E/\QQ_p$ be unramified
		let $D \in \operatorname{MF}^{\varphi_q}_{E,E,\Sigma_E}$ and let $M:=M^+(D).$ Let $x_i$ be the point corresponding to the embedding $\varphi_p^i.$
		Define $V_M$ to be the equivariant vector bundle 
		with $$V_M(X\setminus{S}) = (B_{\QQ_p}^+[1/t]\otimes_{\cR^+_E(\underline{Y})}M^+(D))^{\varphi_q=1}.$$
		and for $i=1,\dots,f-1$ $$\widehat{(V_M)_{x_i}} = \Bdr^+ \otimes^{\varphi_p^i}_{\cR_E^{[s,1)}(\underline{Y})} M^{[s,1)} \text{ for } s \text{ large enough}.$$
		
	\end{defn}
	
	Unwinding the definitions one can show the following Remark.
	\begin{rem}
		We have $V_{M^+(D)} = \mathcal{V}(D).$
	\end{rem}

	Using the notion of $\fB_e$-tuples we can extend the definition of $\mathcal{V}(M)$ without worrying about embeddings into $\Bdr.$
	\begin{prop}
		Let $M$ be a not necessarily free $(\varphi_q,\Gamma_E^{LT})$-module over $\mathcal{R}_E(Y_1,\dots,Y_f)$ such that 
		$M^{(i)}:=M[1/\log_{LT}(Y_i)]\otimes_{\mathcal{R}_E} \tilde{\fB}^{\dagger}_{\text{rig}}  (=M \otimes_{\mathcal{R}_E}\tilde{\fB}^{\dagger}_{\text{rig}}[1/\varphi_p^i(t_{LT}))])$ is free of rank $d$ for every $i.$
		Then 
		$V_{{\varphi_p^i}}:= (M^{(i)})^{\varphi_q=1}$ defines a $\fB_e$-tuple indexed by the co-covering $\{\{\varphi_p^i \}\mid i=1,\dots,f-1\}.$
	\end{prop}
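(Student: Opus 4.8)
The plan is to verify directly the two axioms in the definition of a $\fB_e$-tuple for the family $(V_{\varphi_p^i})_i$ — freeness of each $V_{\varphi_p^i}$ over the relevant ring, and the existence of the transition isomorphisms over the pairwise overlaps. The cocycle condition will be vacuous: every member of $\mathfrak L$ is a one-element set, so a union $T_1\cup T_2$ with $T_1\neq T_2$ has two elements and can never be contained in some $T_3\in\mathfrak L$. First I would record the rings. By the Remark preceding the proposition, for $E/\QQ_p$ unramified with $\pi_E=p$ the element $\varphi_p^i(t_{LT})$ is a degree-one representative of the closed point $x_i:=\infty_{\varphi_p^i}$ of $X_E$, and Berger's embedding $\cR_E(\underline{Y})\hookrightarrow\tilde{\fB}^{\dagger}_{\mathrm{rig}}$ of \cite{Berger2013} sends $\log_{LT}(Y_i)$ to a unit multiple of $\varphi_p^i(t_{LT})$, compatibly with $\varphi_q$ (which multiplies $\log_{LT}(Y_i)$ by $p$) and with $\Gamma_E^{LT}$ (which acts through units). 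Hence $M[1/\log_{LT}(Y_i)]$ is a $\varphi_q$-module over $\cR_E(\underline{Y})[1/\log_{LT}(Y_i)]$, and the parenthetical identification in the statement, $M^{(i)}=M\otimes_{\cR_E}\tilde{\fB}^{\dagger}_{\mathrm{rig}}[1/\varphi_p^i(t_{LT})]$, is a finite free $\tilde{\fB}^{\dagger}_{\mathrm{rig}}[1/\varphi_p^i(t_{LT})]$-module of rank $d$ carrying a $\varphi_q$-semilinear automorphism. Note also that by Theorem \ref{thm:Xprop} and Proposition \ref{prop:BVLE} one has $\big(\tilde{\fB}^{\dagger}_{\mathrm{rig}}[1/\varphi_p^i(t_{LT})]\big)^{\varphi_q=1}=\big(B_E^+[1/\varphi_p^i(t_{LT})]\big)^{\varphi_E=1}=B_{e,\{x_i\}}=\mathcal O_{X_E}(X_E\setminus\{x_i\})$, the $\dagger_{\mathrm{rig}}$-version agreeing with the $B^+$-version by the standard identification used already in the proof of Theorem \ref{thm:cris}.

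Next I would use the standard dictionary between $\varphi_q$-modules over localizations of $\tilde{\fB}^{\dagger}_{\mathrm{rig}}$ and vector bundles on open subschemes of $X_E$ (the pattern underlying \cite[Proposition 10.2.12]{farguesfontainecourbe}): for a closed point $x$ with degree-one representative $t_x$, the functor $N\mapsto N^{\varphi_q=1}$ is an equivalence from finite free $\tilde{\fB}^{\dagger}_{\mathrm{rig}}[1/t_x]$-modules with $\varphi_q$-semilinear automorphism to finite free $B_{e,\{x\}}$-modules, with quasi-inverse $P\mapsto\tilde{\fB}^{\dagger}_{\mathrm{rig}}[1/t_x]\otimes_{B_{e,\{x\}}}P$; the target consists of \emph{free} modules because $B_{e,\{x\}}$ is a PID. (It is here that the hypothesis is used: $M$ itself need not be free, hence need not give a bundle, but each localization $M^{(i)}$ is, and so produces a genuine vector bundle $\mathcal F_i$ on $X_E\setminus\{x_i\}$.) Applying this with $x=x_i$ and $N=M^{(i)}$ shows that $V_{\varphi_p^i}=(M^{(i)})^{\varphi_q=1}=\mathcal F_i(X_E\setminus\{x_i\})$ is finite free of rank $d$ over $B_{e,\{x_i\}}$, and that $\tilde{\fB}^{\dagger}_{\mathrm{rig}}[1/t_{x_i}]\otimes_{B_{e,\{x_i\}}}V_{\varphi_p^i}\xrightarrow{\ \sim\ }M^{(i)}$. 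This settles the first axiom.

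For the transition isomorphisms, fix $i\neq j$ and set $t_{ij}:=\varphi_p^i(t_{LT})\,\varphi_p^j(t_{LT})$, so $B_{e,\{x_i,x_j\}}=(\tilde{\fB}^{\dagger}_{\mathrm{rig}}[1/t_{ij}])^{\varphi_q=1}=\mathcal O_{X_E}(X_E\setminus\{x_i,x_j\})$. Since localization is flat, $\tilde{\fB}^{\dagger}_{\mathrm{rig}}[1/t_{ij}]\otimes_{\tilde{\fB}^{\dagger}_{\mathrm{rig}}[1/\varphi_p^i(t_{LT})]}M^{(i)}\cong M\otimes_{\cR_E}\tilde{\fB}^{\dagger}_{\mathrm{rig}}[1/t_{ij}]$, and taking $\varphi_q$-invariants — which, under the dictionary, corresponds to restricting $\mathcal F_i$ from $X_E\setminus\{x_i\}$ to $X_E\setminus\{x_i,x_j\}$, i.e. is compatible with base change along $B_{e,\{x_i\}}\to B_{e,\{x_i,x_j\}}$ — gives a canonical isomorphism
\[ B_{e,\{x_i,x_j\}}\otimes_{B_{e,\{x_i\}}}V_{\varphi_p^i}\ \cong\ \big(M\otimes_{\cR_E}\tilde{\fB}^{\dagger}_{\mathrm{rig}}[1/t_{ij}]\big)^{\varphi_q=1}. \]
The right-hand side is symmetric in $i$ and $j$, so composing with the analogous isomorphism for $j$ yields the required $B_{e,\{x_i,x_j\}}$-linear transition isomorphism $B_{e,\{x_i,x_j\}}\otimes_{B_{e,\{x_i\}}}V_{\varphi_p^i}\cong B_{e,\{x_i,x_j\}}\otimes_{B_{e,\{x_j\}}}V_{\varphi_p^j}$. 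Since all of these identifications descend from the single module $M$, they are compatible, and the cocycle condition holds (vacuously, as noted above). Hence $(V_{\varphi_p^i})_i$ is a $\fB_e$-tuple indexed by $\mathfrak L$.

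The one genuinely non-formal input is the dictionary invoked in the second and third paragraphs: that $\varphi_q$-descent along $\tilde{\fB}^{\dagger}_{\mathrm{rig}}[1/t_x]\to B_{e,\{x\}}$ is effective and identifies $\varphi_q$-modules with vector bundles on $X_E\setminus\{x\}$, compatibly with shrinking the open subscheme. This is precisely the content that translates Berger's Robba-ring coefficients into the geometry of the Fargues–Fontaine curve; once it is in hand (it is essentially \cite[Proposition 10.2.12]{farguesfontainecourbe} applied over the relevant rings, together with $(\tilde{\fB}^{\dagger}_{\mathrm{rig}}[1/t_x])^{\varphi_q=1}=B_{e,\{x\}}$), the remainder is bookkeeping. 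A secondary point to handle carefully is that one really uses that $M$ comes from a module over $\cR_E(\underline{Y})$, so that $\log_{LT}(Y_i)$ maps to a period and $M^{(i)}$ is the honest base change stated in the proposition, rather than an abstract $\varphi_q$-module.
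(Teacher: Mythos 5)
Your outline (freeness of each $V_{\varphi_p^i}$, transition isomorphisms induced from the single module $M$, vacuously satisfied cocycle condition) is the right shape, and the gluing paragraph is unproblematic once the key step is in place. The gap is that key step, which you correctly isolate as ``the one genuinely non-formal input'' but then discharge with the wrong reference. The dictionary you invoke --- that $N\mapsto N^{\varphi_q=1}$ is an equivalence from finite free $\varphi_q$-modules over $\tilde{\fB}^{\dagger}_{\mathrm{rig}}[1/\varphi_p^i(t_{LT})]$ to finite free $B_{e,\{x_i\}}$-modules, with quasi-inverse the base change --- amounts precisely to the assertion that every such module admits a $\varphi_q$-invariant basis, and that is exactly what has to be proved in this proposition. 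It is not ``essentially \cite[Proposition 10.2.12]{farguesfontainecourbe}'': that result is the admissibility formalism for a regular ring (the $D_B$/$V_B$ yoga used in the crystalline section of this paper) and says nothing about trivialising $\varphi$-modules after inverting a period; nor does your hypothesis alone give you access to the curve-theoretic equivalence, since it only provides freeness of $M^{(i)}$ over the \emph{localised} ring, and the identification of $\varphi_q$-modules with vector bundles (hence with free modules over the PID $B_{e,\{x_i\}}$) requires a $\varphi_q$-stable $\tilde{\fB}^{\dagger}_{\mathrm{rig}}$-lattice, which does not come for free.

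The paper supplies exactly this missing content in two steps: first, the argument of \cite[Theorem 6.11]{Berger2013} shows that $N:=M\otimes_{\mathcal{R}_E}\tilde{\fB}^{\dagger}_{\mathrm{rig}}$ is already free of rank $d$ over $\tilde{\fB}^{\dagger}_{\mathrm{rig}}$ (this is where the fact that $M$ comes from Berger's multivariable Robba ring is really used, beyond the bookkeeping you mention); second, one applies the Dieudonné--Manin classification to $N$, writes it as a direct sum of standard modules, and trivialises each summand after inverting $\varphi_p^i(t_{LT})$ using $\varphi_q(\varphi_p^i(t_{LT}))=p\,\varphi_p^i(t_{LT})$, the cyclotomic model being \cite[Proposition 2.2.6]{berger2008construction}. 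With a $\varphi_q$-invariant basis of $M^{(i)}$ in hand, your freeness claim, the isomorphism $\tilde{\fB}^{\dagger}_{\mathrm{rig}}[1/\varphi_p^i(t_{LT})]\otimes_{B_{e,\{x_i\}}}V_{\varphi_p^i}\cong M^{(i)}$, and the transition isomorphisms over $B_{e,\{x_i,x_j\}}$ all follow as you describe; without it, your second paragraph is circular, since the ``dictionary'' is the proposition.
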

	\begin{proof}
		By assumption $M^{(i)}$ is a $(\varphi_q,G_E)$-module over $\tilde{\fB}^{\dagger}_{\text{rig}}[1/\varphi_p^i(t_{LT})].$ It remains to see that it admits a $\varphi_p^f$-stable basis. The argument of the proof of \cite[Theorem 6.11]{Berger2013} implies that $N:=M \otimes_{\mathcal{R}_E} \tilde{\mathbf{B}}^{\dagger}_{\text{rig}}$ is free of rank $d.$ From here one can deduce the existence of a $\varphi$-invariant basis for $N[1/(\varphi_p^i(t_{LT}))]$ by using the Dieudonné--Manin classification of $\varphi$-modules over $\tilde{\fB}^{\dagger}_{\text{rig}}.$ To this end one writes $N$ as a sum of standard modules and checks it for each summand by using $ \varphi_q(\varphi_p^i(t_{LT}))=p\varphi_p^i(t_{LT})$ for any $i$ (cf. \cite[Proposition 2.2.6]{berger2008construction} in the cyclotomic case).
	\end{proof}

	\let\stdthebibliography\thebibliography
	\let\stdendthebibliography\endthebibliography
	\renewenvironment*{thebibliography}[1]{%
		\stdthebibliography{BHH+22}}
	{\stdendthebibliography}

	\bibliographystyle{amsalpha}
	
	\bibliography{Literatur}
\end{document}